\crefname{section}{\S}{\S}
\NewDocumentCommand\Crefnameitem { m m m O{\textup} O{(\roman*)}} {%
  \Crefname{#1enumi}{#2}{#3} 
  \AtBeginEnvironment{#1}{%
    \crefalias{enumi}{#1enumi}%
    \setlist[enumerate,1]{
        label={#4{#5}.},
        ref={#5}
    }%
  }  
}
\numberwithin{equation}{section}
\setlist[enumerate,1]{
  label = (\alph*), 
  ref = (\alph*)
}
\setlist[enumerate,2]{
  label = (\alph{enumi}.\arabic*), 
  ref = (\alph{enumi}.\arabic*)
}
\newtheorem{theorem}[equation]{Theorem}
\newtheorem{proposition}[equation]{Proposition}
\newtheorem{corollary}[equation]{Corollary}
\newtheorem{lemma}[equation]{Lemma}
\newtheorem{question}[equation]{Question}
\theoremstyle{remark}
\newtheorem{remark}[equation]{Remark}
\newtheorem{definition}[equation]{Definition}
\newcommand{\orgdiv}[1]{#1}%
\newcommand{\orgname}[1]{#1}%
\newcommand{\orgaddress}[1]{#1}%
\newcommand{\street}[1]{#1}%
\newcommand{\postcode}[1]{#1}%
\newcommand{\city}[1]{#1}%
\newcommand{\state}[1]{#1}%
\newcommand{\country}[1]{#1}%
\NewDocumentCommand\placeholder{}{\:\cdot\:} 
\NewDocumentCommand\NewPairedDelimiterS{mmm}{%
  \DeclarePairedDelimiterX{#1}[1]{#2}{#3}%
    {\ifblank{##1}{\placeholder}{##1}}%
}
\NewDocumentCommand\NewPairedDelimiterSS{mmmO{,}}{%
  \DeclarePairedDelimiterX{#1}[2]{#2}{#3}%
    {\ifblank{##1}{\placeholder}{##1}%
    #4%
    \ifblank{##2}{\placeholder}{##2}}%
}
\NewPairedDelimiterS\normord{\mathopen{:}}{\mathclose{:}} 
\langle\rangle[%
\DeclarePairedDelimiterX\bracket[3]\langle\rangle%
\NewDocumentMathCommand\dparen{m}{\lparen\!\lparen{#1}\rparen\!\rparen}
\NewDocumentMathCommand\dbrack{m}{\lbrack\!\lbrack{#1}\rbrack\!\rbrack}
\providecommand\given{}
\NewDocumentCommand \SetSymbol {o}
  { \nonscript\:#1\vert\allowbreak\nonscript\:\mathopen{} }
\DeclarePairedDelimiterX\Set[1]\{\}
\renewcommand\given{\SetSymbol[\delimsize]}#1 }
\DeclarePairedDelimiterX\GSet[1]\langle\rangle
\renewcommand\given{\SetSymbol[\delimsize]}#1 }
\RenewDocumentCommand \subset {} {\subseteq}
\NewDocumentCommand \concept {m} {\textbf{#1}}
\DeclareRobustCommand\dashrightarrow
\declaremathcommand\dashto{\dashrightarrow}
\NewDocumentCommand \txforall {O{\qquad}} {#1\text{for all}#1}%
\NewDocumentCommand \txand {O{\qquad}} {#1\text{and}#1}%
\NewDocumentCommand \txst {O{~}} {#1\text{s.t.}#1}%
\NewDocumentMathCommand \sequence { m O{1} O{n} }
  { \ensuremath{ {#1}_{#2}, \cdots, {#1}_{#3} } }
\NewDocumentMathCommand \supsequence { m O{1} O{n} }
  { \ensuremath{ {#1}^{#2}, \cdots, {#1}^{#3} } }
\NewDocumentCommand \fun { m e{^_} O{} }
  {%
    \operatorname{#1}%
    \IfValueT{#2}{\sp{#2}}%
    \IfValueT{#3}{\sb{#3}}%
    \ifblank{#4}{}{\mleft(#4\mright)}%
  }
\NewDocumentMathCommand\Dfrac{mm}{%
  \dfrac{\displaystyle #1}{\displaystyle #2}%
}
\NewDocumentMathCommand\odv{m}{\frac{\d}{\d{#1}}}
\NewDocumentMathCommand\pdv{m}{\frac{\partial}{\partial{#1}}}
\NewDocumentMathCommand\sch{m}{\mathscr{#1}}
\NewDocumentMathCommand\mpt{m}{\mathsf{#1}}
\NewDocumentMathCommand\mPt{m}{\mathcal{#1}}
\NewDocumentMathCommand\shf{m}{\mathcal{#1}}
\declaremathcommand\Vect{\mathbb{V}}
\NewDocumentMathCommand\vect{m}{\mathsf{#1}}
\declaremathcommand\iu{\mathbb{i}}
\declaremathcommand\L{\mathcal{L}}
\declaremathcommand\fL{\mathfrak{L}}
\declaremathcommand\sL{\mathscr{L}}
\declaremathcommand\U{\mathscr{U}}
\declaremathcommand\H{\mathsf{H}}
\declaremathcommand\Fu{\mathfrak{F}}
\declaremathcommand\o{\otimes}
\declaremathcommand\spn{\fun{span}}
\declaremathcommand\Cc{{C}^{\circ}}
\declaremathcommand\oC{\overline{C}}
\declaremathcommand\O{\mathcal{O}}
\declaremathcommand\PP{\mathbb{P}}
\declaremathcommand\la{\lambda}
\declaremathcommand\ds{\dots}
\declaremathcommand\bu{\bullet}
\declaremathcommand\al{\alpha}
\declaremathcommand\g{\mathfrak{g}}
\declaremathcommand\la{\lambda}
\declaremathcommand\ra{\longrightarrow}
\declaremathcommand\tilX{\widetilde{\X}}
\declaremathcommand\Xc{{\X}^{\circ}}
\declaremathcommand\Pc{\mathring{\mathbb{P}}^1}
\declaremathcommand\Cfb{\fun{\mathscr{C}}}
\declaremathcommand\Image{\fun{Im}}
\declaremathcommand\dQ{\mathfrak{Q}}
\declaremathcommand\cova{\lrbrack}
\declaremathcommand\simto{\overset{\sim}{\to}}
\NewDocumentMathCommand\fusion{O{M^1}O{M^2}O{M^3}}{{\textstyle\binom{#3}{#1\;#2}}}
\NewDocumentMathCommand\rfusion{O{M^1}O{M^2(0)}O{M^3(0)}}{{\textstyle\binom{#3}{#1\;#2}}}
\declaremathcommand\Fusion{\mathfrak{I}\fusion}
\declaremathcommand\Nusion{\vect{N}\fusion}
\newmathcommand\vac{\mathbb{1}}
\NewDocumentMathCommand\delfun{mmO{{#2}^{-1}}}{#3\delta\left(\dfrac{#1}{#2}\right)}
\newmathcommand\del{\fun{\delta}}
\NewDocumentMathCommand\pfrac{mm}{\left(\dfrac{#1}{#2}\right)}
\NewDocumentMathCommand\vo{mm}{#1_{(#2)}}
\NewDocumentMathCommand\lo{mm}{#1_{[#2]}}
\NewDocumentMathCommand\loo{mmm}{#1^{[#2]}_{[#3]}}
\NewDocumentMathCommand\lqo{mmm}{#1_{#2[#3]}}
\declaremathcommand\ptseq{\sequence{\pp}}
\declaremathcommand\zseq{\sequence{z}}
\declaremathcommand\rseq{\sequence{r}}
\declaremathcommand\aseq{\supsequence{a}}
\NewDocumentMathCommand \cfbseq { O{1} O{n} }
  { \ensuremath{ (a^{#1},\pp_{#1})\cdots(a^{#2},\pp_{#2}) } }
\NewDocumentMathCommand \ainVseq { O{1} O{n} }
  { \ensuremath{ a^{#1}\in V^{r_{#1}}, \cdots, a^{#2}\in V^{r_{#2}} } }
\NewDocumentMathCommand \cfbseqb { O{1} O{p} }
  { \ensuremath{ (b^{#1},\pp_{+#1})\cdots(b^{#2},\pp_{+#2}) } }
\NewDocumentMathCommand \lobmseq { O{1} O{p} }
  { \ensuremath{ \lo{b^{#1}}{\frac{r_{#1}}{T}+m_{#1}}\cdots\lo{b^{#2}}{\frac{r_{#2}}{T}+m_{#2}} } }
\NewDocumentMathCommand\charge{m}{V_{L}^{#1}}
\NewDocumentMathCommand\charhalf{mm}{V_{#1+L}^{#2}}
\NewDocumentMathCommand\charlam{m}{V_{#1+L}}
\newtheorem{thm}{Theorem}[section]
\newtheorem*{theorem*}{Theorem}
\newtheorem{coro}[thm]{Corollary}
\newcommand*\bigcdot{\mathpalette\bigcdot@{.5}}
\newcommand*\bigcdot@[2]{\mathbin{\vcenter{\hbox{\scalebox{#2}{$\m@th#1\bullet$}}}}}
\providecommand{\customgenericname}{}
\newcommand{\newcustomtheorem}[2]{%
	\newenvironment{#1}[1]
	{%
		\renewcommand\customgenericname{#2}%
		\renewcommand\theinnercustomgeneric{##1}%
		\innercustomgeneric
	}
	{\endinnercustomgeneric}
}
\theoremstyle{definition}
\def \ra {\rightarrow}
\def \C {\mathbb{C}}
\def\la{\lambda}
\def \al{\alpha}
\def \si{\sigma}
\def \om{\omega}
\def \ga {\gamma}
\def \op {\oplus}
\def\ds{\dots}
\def \ssq{\subseteq}
\def \vac {\mathbf{1}}
\def \P{\mathbf{P}}
\def \g {\mathfrak{g}}
\def \H {\mathbb{H}}
\def \Hom {\mathrm{Hom}}
\def \End {\mathrm{End}}
\def\spn{\mathrm{span}}
\def\Id{\mathrm{Id}}
\def \wt {\mathrm{wt}}
\def\SL{\mathrm{SL}}
\def\sl{\mathfrak{sl}}
\def \bs {\backslash}
\def \o {\overline}
\def\trM{\mathrm{tr}|_M}
\def\tr{\mathrm{tr}}
\def\pp{\mathfrak{p}}
\def\O{\mathcal{O}}
\def\L{\mathcal{L}}
\def\L{\mathcal{L}}
\def\<{\langle}
\def\>{\rangle}
\title[Conformal blocks and modular invariance]{ A basis theorem for Genus-One Conformal Blocks and modular invariance of intertwining operators}
\author{Xu Gao}
\address{\orgdiv{Department of Mathematics}, \orgname{Tongji University}, \orgaddress{\street{1239 Siping Road}, \city{Shanghai}, \postcode{200092}, \state{Shanghai}, \country{China}}}
\email{gausyu@tongji.edu.cn}
\author{Jianqi Liu}
\address{\orgdiv{Department of Mathematics}, \orgname{University of Pennsylvania}, \orgaddress{\street{209 South 33rd Street}, \city{Philadelphia}, \postcode{19104}, \state{PA}, \country{USA}}}
\email{jliu230@sas.upenn.edu}
\begin{document}
	\maketitle
	\begin{abstract}
		We prove that trace functions associated to intertwining operators over a strongly rational vertex operator algebra (VOA) form a global frame of the conformal block bundle $\mathscr{C}_\H(W)$ over $\H$. Consequently, for each $\tau\in\H$, these trace functions, evaluated at $\tau$, form a basis of the fiber $\mathscr{C}(E_\tau,\mpt{p},z,W)$, and the natural $\SL(2,\Z)$-action on the fiber is represented in this basis. This result is both a generalization and a refinement of Zhu's and Dong-Li-Mason's modular invariance theorems for trace functions associated to  vertex operators and twisted vertex operators, and a specialization and refinement of Huang's and Miyamoto's modular invariance theorems for (logarithmic) intertwining operators for $C_2$-cofinite VOAs. The proof combines a new construction of a connection on the bundle $\mathscr{C}_\H(W)$, Zhu’s recursive formulas for trace functions, Frenkel-Zhu's fusion rules theorem, and recent theorems of Damiolini-Gibney-Krashen-Tarasca on the geometry of sheaves of VOA-conformal blocks over the moduli spaces $\overline{\mathscr{M}}_{g,n}$. 
	\end{abstract}
	\tableofcontents

	\section{Introduction}
	
	The notion of conformal blocks on stable curves defined by modules over a vertex operator algebra \cite{Zhu94,FBZ04,NT05,DGT21} generalizes WZNW-conformal blocks on stable curves defined by modules over affine Lie algebras \cite{TK87,TUY89,Uen08}. Recent advances in the theory of VOA-conformal blocks shed new light on the representation theory of VOAs and the associated rational conformal field theories. They also provide powerful geometric tools for approaching classical and fundamental results in the theory of VOAs, and have led to new developments in the field \cite{DGT24,DGK25,DW25}.
	
	\subsection{The main theorem and consequences}	
	We state our main theorem first. 
	Let $V$ be a strongly rational VOA \cite{Zhu96,DLM98,DLM00}, and let $W$ be an irreducible $V$-module. Let $\mathscr{C}(W)$ be the vector bundle of VOA-conformal blocks on $\overline{\mathscr{M}}_{1,1}$ associated to $W$ \cite{DGT24}, and  $\mathscr{C}_\H(W)=\pi^\ast(\mathscr{C}(W)|_{\mathscr{M}_{1,1}})$ be the pullback bundle along the universal cover $\pi: \H\ra \mathscr{M}_{1,1}$. We refer to the fiber of the bundle $\mathscr{C}_\H(W)$ at $\tau\in \H$ as the {\em space of one-point conformal blocks on the torus} $(E_\tau,\mpt{p},z)$, denoted by  $\mathscr{C}(E_\tau,\mpt{p},z,W)$. 
	Let $\mathscr{W}=\{M^0,M^1,\ds, M^N\}$ be all the irreducible $V$-modules up to isomorphism.
	For $k=0,1,\ds ,N$, let $\{I^k_1,\ds ,I^k_{n_k}\}$ be a basis of the space of intertwining operators $I\fusion[W][M^k][M^k]$ \cite{FHL93}. Fix $\tau\in \H$, with $q=e^{2\pi i \tau}$. We can prove that the trace functions associated to these basis elements $I_j^k$ are well-defined elements in the fiber $\mathscr{C}(E_\tau,\mpt{p},z,W)$, see Theorem~\ref{thm:conv}:  
	\[
	\varphi_{I^k_j}(\tau)=\braket*{\varphi_{I^k_j}(\tau)}{\cdot}: 	W\ra \C,\quad v\mapsto \tr|_{M^k} o_{I^k_j}(v) q^{L(0)-\frac{c}{24}}.
	\]
	Furthermore, the bundle $\mathscr{C}_\H(W)$ admits a natural flat connection $\nabla$ given by the differential equation satisfied by these trace functions, see section~\ref{sec:6a}. 
	As $\tau$ varies in $\H$, the trace functions give rise to global horizontal sections $	\varphi_{I^k_j}(\cdot)\in \Gamma(\H,\mathscr{C}_\H(W))$. 
	
	\subsubsection{The main theorem}	
	The following is our main theorem, see Theorem~\ref{thm:basiscfb} and Theorem \ref{thm:modularinv}:
	
	\begin{theorem}\label{main:intro}
		Assume there exists a finite-dimensional graded subspace $U\ssq W$ such that $W=U\op C_2(W)$, and $U\ssq \sum_{i\in \Lambda} \mathcal{U}(\mathrm{Vir}_\om).v^i$, where each $v^i\in W$ is a $\mathrm{Vir}_\om=\spn\{L(n),C: n\in \Z\}$ highest-weight vector. Then 
		\begin{enumerate}
			\item For each $\tau\in \H$, the set	$\{\varphi_{I^k_j}(\tau): 0\leq k\leq N, 1\leq j\leq n_k \}$ forms a basis of the space of one-point conformal blocks on the torus $\mathscr{C}(E_\tau,\mpt{p},z,W)$. In other words,  $\{\varphi_{I_j^k}(\cdot): 0\leq k\leq N, 1\leq j\leq n_k\}$ forms a global frame of the vector bundle  $\mathscr{C}_\H(W)$. 
			\item Let $\tau\in \H$, $v\in W$ homogeneous,  and $\ga=\footnotesize{\begin{pmatrix}
					a&b\\c&d
			\end{pmatrix}}\in \SL(2,\Z)$. Write 
			\[
			Z_{I^k_j}(v,\tau):=\braket*{\varphi_{I^k_j}(\tau)}{v}=\tr|_{M^k}o_{I^k_j}(v)q^{L(0)-\frac{c}{24}},\quad 0\leq k\leq N,\ 1\leq j\leq n_k.
			\]
			Then there exist constants $\ga_{(k,j),(k',j')}\in \C$, depending only on $\ga$ and the chosen intertwining operators, such that 
			\[
			Z_{I^k_j}\left((c\tau+d)^{-L[0]}v,\frac{a\tau+b}{c\tau+d}\right)=\sum_{k'=0}^N\sum_{j'=1}^{n_{k'}} \ga_{(k,j),(k',j')}\cdot  Z_{I^{k'}_{j'}}(v,\tau ). 
			\]
		\end{enumerate}
	\end{theorem}
	In comparison with the classical modular invariance theorems for VOAs, Theorem~\ref{main:intro} gives a basis-level refinement: in the strongly rational case, trace functions attached to a basis of intertwining operators form a global frame of the geometric conformal block bundle, and hence form a basis of every fiber.
	Zhu’s basis theorem \cite[Theorem 5.3.3]{Zhu96} states that the system of correlation functions $\{S_{M^i}((a_1,z_1),\ds, (a_n,z_n),\tau):0\leq i\leq N\}$, which are limits of the trace functions associated to vertex operators $\tr|_{M^i}Y_{M^i}(a_1,e^{2\pi i z_1})\ds Y_{M^i}(a_n,e^{2\pi i z_n})q^{L(0)-\frac{c}{24}}$, forms a basis of the space of linear functionals $S_n: V^n\to \mathcal{F}_n$ satisfying the genus-one axioms as in \cite[Definition 4.1.1]{Zhu96}, where $\mathcal{F}_n$ is the space of meromorphic functions on $\C^n \times \H$ satisfying the periodicity property. Theorem~\ref{main:intro} (a) states that when $W=V$ and $n=1$, the correlation functions $\{S_{M^i}((a,z),\tau):0\leq i\leq N\}$ evaluated at each fixed $\tau\in \H$, which can be viewed as linear functionals on $V$, form a basis of the vector space of linear functionals on $V$ satisfying the genus-one conformal block conditions. Furthermore, if we take  $W=V$ and $I^k=Y_{M^k}$, then Theorem~\ref{main:intro} (b) recovers Zhu’s modular invariance theorem for vertex operators \cite[Theorem 5.3.2]{Zhu96}, see Corollary~\ref{coro:originalmodularinv}. On the other hand, Huang’s \cite{H24} and Miyamoto’s \cite{Mi04} modular invariance theorems state that when $V$ is $C_2$-cofinite, the quasi-trace functions associated to vertex operators or logarithmic intertwining operators span the space of one-point conformal blocks on the torus. Theorem~\ref{main:intro} states that under the stronger additional assumption that $V$ is rational, there is a one-to-one correspondence between bases of the spaces of intertwining operators and global frames of the bundle $\mathscr{C}_\H(W)$. We should note that our definition of a conformal block in $\mathscr{C}(E_\tau,\mpt{p},z,W)$ is a linear functional on $W$ that is invariant under the action of the chiral Lie algebra $\mathcal{L}_{E_\tau\bs\mpt{p}}(V)$.
	This definition agrees with the one in \cite{DGT24,DGK25}. However, it is different from Zhu’s \cite{Zhu96}, Dong-Li-Mason’s \cite{DLM00}, Miyamoto’s \cite{Mi04}, and Huang’s \cite{H24} definitions of genus-one conformal blocks, which are linear functionals from $V$ or $W$ to spaces of holomorphic functions on $\H$.
	
	The following picture summarizes the relation between Theorem~\ref{main:intro} (b) and other modular invariance theorems: 
	\[
	\begin{tikzpicture}
		\draw (-3, 0) rectangle node[text centered, align=center] {Modular invariance of \\trace functions associated to \\
			(twisted) vertex operators $Y_M$ \\ for strongly rational VOAs \\ \cite{Zhu96,DLM00}} (3, 2.5);
		
		\draw[dashed,->,thick] (3.2, 1.25) -- (5.7, 1.25) node [pos=0.5,above] {generalization} node [pos=0.5,below] {refinement};
		
		\draw[dashed] (6, 0) rectangle node[text centered, align=center] {Modular invariance of \\trace functions associated to  \\intertwining operators $I$ \\ for strongly rational VOAs} (12, 2.5);
		
		\draw[->,thick] (0, -0.2) -- (0, -1.3)  node [pos=0.5,left] {generalization};
		
		\draw (-3, -4) rectangle node[text centered, align=center] {Modular invariance of\\
			quasi-trace functions associated 
			\\to vertex operators $Y_M$ for\\
			$C_2$-cofinite VOAs \cite{Mi04}} (3, -1.5);
		
		\draw[->,thick] (3.2, -2.75) -- (5.7, -2.75) node [pos=0.5,above] {generalization};
		
		\draw (6, -4) rectangle node[text centered, align=center] {Modular invariance of \\quasi-trace functions associated \\
			to logarithmic intertwining \\ operators $I$ for $C_2$-cofinite 
			\\ VOAs \cite{H24}} (12, -1.5);
		
		\draw[dashed,->,thick] (9, -1.3) -- (9, -0.2)  node [pos=0.5,right] {specialization} node [pos=0.5,left] {refinement};
	\end{tikzpicture}
	\]
	\subsubsection{Consequences of the main theorem}
	There are geometric and number-theoretic consequences of our main theorem. First, part (b) naturally gives rise to a projective representation of the full modular group $\SL(2,\Z)$ on the finite-dimensional vector space $C_W$ spanned by the global sections ${\varphi_{I^k_j}(\cdot): 0\leq k\leq N, 1\leq j\leq n_k }\subset \Gamma(\H,\mathscr{C}_\H(W))$, which is isomorphic to $ \bigoplus_{k=0}^N I\fusion[W][M^k][M^k]$ \cite{MS89}. The representation is projective because the term $(c\tau+d)^{-L[0]}v$ may involve rational powers of $(c\tau+d)$. When the conformal weight $h_W$ is an integer, for instance when $W=V$, this leads to an honest representation $\tilde{\rho}_\tau$ of $\SL(2,\Z)$ on each fiber $\mathscr{C}(E_\tau,\mpt{p},z,W)$ of the conformal block bundle $\mathscr{C}(W)|_{\mathscr{M}_{1,1}}$. Note that the moduli space $\mathscr{M}_{1,1}$ is the orbifold quotient stack $[\SL(2,\Z)\bs\!\!\bs \H]$, with orbifold fundamental group $\pi_1(\mathscr{M}_{1,1})\cong \SL(2,\Z)$ \cite{Ha08,Stack}.
	We prove that when $h_W\in \Z$, the connection $\nabla$ on the pullback bundle $\mathscr{C}_\H(W)$ descends to a flat connection $\nabla^{\mathscr{M}_{1,1}}$ on $\mathscr{C}(W)|_{\mathscr{M}_{1,1}}$, see also \cite{DGT21}. Furthermore, the representation $\tilde{\rho}_\tau$ agrees with the monodromy representation of the fundamental group $\SL(2,\Z)$ on the fibers of the bundle $(\mathscr{C}(W)|_{\mathscr{M}_{1,1}},\nabla^{\mathscr{M}_{1,1}})$, see Proposition~\ref{prop:monodroyrep}.
	
	On the other hand, it is well known that if $V$ is a holomorphic VOA \cite{FLM88}, then Zhu’s modular invariance theorem shows that the trace function $Z_V(a,\tau)=\tr|_Vo(a)q^{L(0)-\frac{c}{24}}$ is a modular form with character, and $\mathrm{Ch}_qV=Z_V(\vac,\tau)$ is a modular function \cite{DM04}. For instance, $\mathrm{Ch}_q V^\natural=j(q)-744$ \cite{FLM88}.
	Theorem~\ref{main:intro} shows that more general modular forms can be obtained from trace functions associated to intertwining operators of non-holomorphic strongly rational VOAs, see also \cite{Mi00}. 
	In particular, assume $ \bigoplus_{k=0}^N I\fusion[W][M^k][M^k]=\C I$ is one-dimensional, with $I\in I\fusion[W][M][M]$, and that $h_M-\frac{c}{24}$ is an integer. Let $v\in W$ be an $L[0]$-homogeneous element of weight $k\in \N$ such that $Z_I(v,\tau)\neq 0$. Then $Z_I(v,\tau)$ is an honest modular form for $\SL(2,\Z)$ of weight $k$, see Theorem~\ref{thm:modularform}. Although these assumptions seem strong, without assuming that $V$ is holomorphic, there are actually many examples. For instance, using the type $A_1$ affine VOA of level $16$, we show that the cusp form $\Delta(\tau)=q\prod_{n=1}^\infty (1-q^n)^{24}$ can be realized as $Z_I(v,\tau)$, see Proposition~\ref{prop:affinecusp}.

	\subsection{Main ideas of the proof}	
	To sketch the main idea of the proof of Theorem~\ref{main:intro}, we set some notation for VOA conformal blocks, see sections~\ref{subsec:2e}-\ref{subsec:2g}. Let $X$ be a stable complex algebraic curve, with marked points $\mpt{p}_\bullet=(\mpt{p}_1,\ds \mpt{p}_{n})$ and local coordinates $z_\bullet=(z_1,\ds,z_n)$.   Let $M^\bullet$ be an $n$-tuple of $V$-modules. 
	When $X\setminus\mpt{p}_\bullet$ is affine, the space of {\em coinvariants} at $(X,\mpt{p}_\bullet,z_\bullet)$ is 
	\[
	\cova{M^\bullet}_{(X,\mpt{p}_\bullet,z_\bullet)}:=
	M^\bullet_{\L_{X\setminus\mpt{p}_\bullet}(V)} = 
	M^\bullet/\L_{X\setminus\mpt{p}_\bullet}(V).M^\bullet,
	\]
	where $\L_{X\setminus\mpt{p}_\bullet}(V)=H^0(X\setminus\mpt{p}_\bullet,\cV\otimes\omega_{X}/\Image{\nabla} )$ is a Lie algebra determined by $V$ and $(X,\mpt{p},z_\bullet)$, which acts diagonally on the tensor product $M^\bullet$. Its linear dual space $(\cova{M^\bullet}_{(X,\mpt{p}_\bullet,z_\bullet)})^\ast$ is called the space of {\em conformal blocks} at $(X,\mpt{p}_\bullet,z_\bullet)$, denoted by $\mathscr{C}(X,\mpt{p}_\bullet,z_\bullet,M^\bullet)$. For instance, there is a natural identification $I\fusion \cong (\cova{(M^3)'\otimes M^1\otimes M^2}_{(\PP^1,(\infty ,w,0))})^\ast.$ 
	
	
	\subsubsection{Space of one-point conformal blocks and formal variable extension}
	To be consistent with the properties of trace functions, we use Zhu's  coordinate changed VOA $(V,Y[\cdot,z],\vac , \tilde{\om})$ and modules $(W,Y_W[\cdot,z])$ for the definition of conformal blocks at one-point the elliptic curve $(E_\tau,\mpt{p},z)$. Note that the coordinate changed VOA is isomorphic to $V$ itself if $V$ is strongly rational \cite[Theorem 4.2.2]{Zhu96}. Using the characterization of $H^0(E_\tau\bs \mpt{p}, \om_{E_\tau})$ and the definition of chiral Lie algebra $\mathcal{L}_{E_\tau\bs\mpt{p}}(V)$, we first show that  in $\mathscr{C}(E_\tau,\mpt{p},z,W)$ consists of linear functionals $\varphi(\tau):W\ra \C$ satisfying the conditions: 
	\begin{align*}
		&\braket*{\varphi(\tau)}{ \Res_{z=0} Y_W[a,z]v}=0,\quad a\in V,\ v\in W\\
		&\braket*{\varphi(\tau)}{\Res_{z=0} Y_W[a,z]\iota_z(\wp_m(z,\tau))v}=0,\quad m\geq 2.
	\end{align*}
	where $\wp_m(z,\tau)$ is the Weierstrass $\wp$-function, and $\iota_z(\wp_m(z,\tau))=\frac{1}{z^m}+(-1)^m\sum_{k=1}^\infty \binom{2k-1}{m-1} G_{2k}(\tau) z^{2k-m}$ is the Laurent expansion around $z=0$. 
	
	Let $I\in I\fusion[W][M][M]$, which corresponds to a three-point conformal block on $\PP^1$, with $M$ and $M'$ attached at $0$ and $\infty$.  The sewing of conformal blocks attached at $\infty$ and $0$ on $\PP^1$ transforms $I$ into a formal trace function $\varphi_I(\tau)=\tr|_M o_I(\cdot )q^{L(0)-\frac{c}{24}}: W\to \C[\![q]\!]q^{h_M-\frac{c}{24}}$. In order to show the convergence of this formal power series, we adopt the idea in \cite{Zhu96,KL93}  and introduce a module of coinvariants $[W\otimes R]_{(E_\tau,\mpt{p},z)}=W\otimes R/\L_{E_\tau\bs \mpt{p}}(V). (W\otimes R),$ where $R=\C[\widetilde{G}_{2}(q),\widetilde{G}_{4}(q),\widetilde{G}_{6}(q)]\subset \C[\![q]\!]$, and $\widetilde{G}_{2k}(q)$ are the formal Eisenstein series. We generalize the argument in \cite{Zhu96,DLM00} from vertex operators to intertwining operators and show that the $R$-module  $	[W\otimes R]_{(E_\tau, \mpt{p},z)}$ is a finitely generated if $W$ is $C_2$-cofinite. More precisely, if $W=U\op C_2(W)$, we can show that  $[W\otimes R]_{(E_\tau, \mpt{p},z)}=[U\otimes R]_{(E_\tau, \mpt{p},z)}$, see Proposition~\ref{prop:C2coinv}. 
	
	We then generalize some of the calculations in \cite[Section 4.3]{Zhu96} from vertex operators to intertwining operators and prove that the formal trace functions associated to intertwining operators satisfy the following recursive formula, see Corollary~\ref{coro:recursive}: 
	\begin{align*}
		\trM o_I(a[-1]v)q^{L(0)-\frac{c}{24}}=\trM o(a)o_I(v)q^{L(0)-\frac{c}{24}}+\sum_{k=1}^\infty \widetilde{G}_{2k}(q) \trM o_I(a[2k-1]v)q^{L(0)-\frac{c}{24}},
	\end{align*}
	where $a\in V$ and $v\in W$. Using this formula, we can show that the formal trace functions evaluated at various elements in $W$ satisfy a first-order differential equation \eqref{eq:diffeqfortrace}, together with the fact that $	[W\otimes R]_{(E_\tau, \mpt{p},z)}$ is a finitely generated $R$-module, this equation further generalizes to a higher order differential equation for $\tr|_Mo_I(v) q^{L(0)-\frac{c}{24}}$ \eqref{eq:higherdiffeq}. In particular, if $W$ is $C_2$-cofinite, and the $C_2$-complement $U$ is contained in a sum of Virasoro highest-weight modules, then formal series $\varphi_I(\tau)=\tr|_Mo_I(v) q^{L(0)-\frac{c}{24}}$ is convergent for a given $\tau\in \H$, and is a well-defined element in 
	$\mathscr{C}(E_\tau,\mpt{p},z,W)$, see Theorem~\ref{thm:conv}.
	

	\subsubsection{Connection of the vector bundle $\mathscr{C}_\H(W)$ and  proof of the basis theorem}From the differential equation \eqref{eq:diffeqfortrace} for the trace functions $\tr|_Mo_I(v) q^{L(0)-\frac{c}{24}}$, we construct a connection $\nabla$ on the vector bundle $\mathscr{C}_\H(W)$. More precisely, for $\xi=q\frac{d}{dq}=\frac{1}{2\pi i}\frac{d}{d\tau}\in \mathscr{T}_\H$ and $T(\tau)=L[-2]-\sum_{k\geq 1} G_{2k}(\tau) L[2k-2]$, we define the section $\nabla_\xi\varphi$ of the trivial bundle $\widetilde{W^\ast}$ on $\H$ by
	\[	\braket*{\nabla_\xi\varphi(\tau)}{v}:=\left(q\frac{d}{dq}\right)\braket*{\varphi(\tau)}{v}-\frac{1}{(2\pi i)^2} \braket*{\varphi(\tau)}{T(\tau)v},\quad v\in W. \]
	Using basic properties of the Weierstrass functions $\wp_m$ and $\wp_1$, we show that $\nabla_\xi\varphi$ is a well-defined section of the conformal block bundle $\mathscr{C}_\H(W)$. This gives a well-defined connection $\nabla$ on $\mathscr{C}_\H(W)$ such that $\varphi_I(\cdot)\in \Gamma(\H,\mathscr{C}_\H(W))$ is a horizontal global section with respect to $\nabla$ for every intertwining operator $I$ of the desired form, see Proposition~\ref{prop:connection}.
	
	Consider the horizontal sections  $\{\varphi_{I_j^k}(\cdot)\in \Gamma(\H,\mathscr{C}_\H(W)): 0\leq k\leq N, 1\leq j\leq n_k\}$. Using the fusion rules theorem \cite{FZ92,Li99,Liu23,GLZ25,Liu26}, together with basic properties of the semisimple $A(V)$-bimodule $A(W)$, we can show that the top degrees of the trace series $\{\tr|_{M^k(0)}o_{I^k_j}(\cdot) q^{h_{M^k}-\frac{c}{24}}: 0\leq k\leq N, 1\leq j\leq n_k\}$, which are leading terms of $\{\varphi_{I_j^k}(\tau): 0\leq k\leq N, 1\leq j\leq n_k\}$, are linearly independent. However, this does not automatically indicate that $\{\varphi_{I_j^k}(\tau)\}$ are linearly independent in each fiber $\mathscr{C}(E_\tau,\mpt{p},z,W)=\mathscr{C}_\H(W)_\tau$. It only shows that there exists an open subset $O\ssq \H$ near infinity such that for any $\tau\in O$, $\{\varphi_{I_j^k}(\tau)\}$ are linearly independent in the fiber $\mathscr{C}(E_\tau,\mpt{p},z,W)$. However, using the parallel transport given by the connection $\nabla$ on $\mathscr{C}_\H(W)$, together with the fact that $\{\varphi_{I_j^k}(\cdot)\}$ are horizontal sections, we can transform the linearly independence of the sections $\{\varphi_{I_j^k}(\cdot)\}$ evaluated at each $\tau\in O$  to the linearly independence of these sections evaluated at each $\tau\in \H$.
	Finally, we apply the \nameref{lem:VB} and \nameref{lem:FT} of the vector bundle of VOA-conformal blocks and show that $\dim \mathscr{C}(E_\tau,\mpt{p},z,W)$ is equal to $\dim \bigoplus_{k=0}^N I\fusion[W][M^k][M^k]$. Therefore, $\{\varphi_{I_j^k}(\tau): 0\leq k\leq N, 1\leq j\leq n_k\}$ form a basis of each fiber $\mathscr{C}(E_\tau,\mpt{p},z,W)$ of the bundle $\mathscr{C}_\H(W)$, see Theorem~\ref{thm:basiscfb} for more details. 
	
	There are a couple of interesting follow-up questions arising from Theorem~\ref{main:intro}. In addition to the monodromy representation and the realization of modular forms, we are interested in the unitarity of the generalized $S$-matrix $S(i)$ arising from the representation of $\SL(2,\Z)$ on $ \bigoplus_{k=0}^N I\fusion[M^i][M^k][M^k]$ \cite{MS89,BK01,EGNO15}, as well as the congruence property of this representation \cite{DLN15,CDT25}. We discuss these questions in detail in Section~\ref{sec:consequence}.

	\subsubsection{A rank formula for the VOA-bundle}
	To further demonstrate the usefulness of the geometric theory of VOA-conformal blocks, we apply \nameref{lem:VB} and \nameref{lem:FT} in two additional directions. First, we give a geometric proof of the associativity of fusion rules for strongly rational VOAs. Second, we use factorization to derive a rank formula for VOA-coinvariant bundles on $\overline{\mathscr{M}}_{g,n}$, generalizing the geometric rank formula for WZNW-coinvariants. 
	
	Huang and Lepowsky explicitly constructed the tensor product $W^i\boxtimes_{P(z)} W^j$ of modules over VOAs using the axioms of VOAs and intertwining operators in a series of papers \cite{HL95,HLZ1,HLZ2}. When the VOA $V$ is strongly rational, their tensor product agrees with the fusion tensor product $W^i\boxtimes_{V} W^j=\bigoplus_{k} N_{ij}^k W^k$. The associativity of the tensor product follows from a convergence condition for the composition of intertwining operators \cite{H95,HL99}. Later, Huang reduced this convergence condition to the axioms of strongly rational VOAs \cite{H05}, and proved that such a $V$-module category is a vertex tensor category and, in fact, a modular tensor category \cite{H08,H08(2)}.
	The associativity of the fusion tensor product is equivalent to the associativity of fusion rules:
	\[
	\sum_{W\in\spW}\Nusion[W^1][W^2][W]\Nusion[W][W^3][W^4] =
	\sum_{W\in\spW}\Nusion[W^2][W^3][W]\Nusion[W^1][W][W^4].
	\]
	It is well known in the physics literature that, for rational conformal field theory, this associativity can be obtained by sewing conformal blocks on two three-pointed spheres into a four-pointed sphere in two different ways \cite{V88,MS89,Uen08}. However, it requires substantial effort to derive this statement from the axioms of VOAs \cite{H05,HL99}.
	With the recent sewing and factorization results \cite{DGT24,DGK25}, we follow the physical idea and present a proof of the associativity of fusion rules for strongly rational VOAs, see Theorem~\ref{prop:FusionAsso}. Another proof of associativity using the fusion rules theorem was also given in \cite[Section 3.1]{DGT22}.
	
	The classical Verlinde formula for strongly rational VOAs, proved by Huang in \cite{H08}, calculates the fusion rules using coefficients of the classical $S$-matrix. In the theory of WZNW-conformal blocks, there is a geometric version of Verlinde formula which calculates the rank of the vector bundle of WZNW-conformal blocks, see \cite[Section 5.5]{Uen08}. With the factorization theorem for VOA-conformal blocks, we generalize this rank formula to the VOA case.
	
	To state this rank formula,	we fix a numbering $W^\square\colon \cI=\Set{0,1,\cdots,r}\xrightarrow{\simeq}\spW$.
	Define the involution $\dagger$ on $\cI$ such that $W^{k^\dagger}$ is the simple module in $\spW$ isomorphic to $(W^k)'$. For a matrix $\vect{M}$, we use $\vect{M}_i^j$ to denote its $(i,j)$-entry. 
	The matrix $\vect{N}_i$ is defined as $(\vect{N}_i)_j^k = \vect{N}_{i,j}^k$, where $\vect{N}_{i,j}^k$ denotes the fusion rule $\Nusion[W^i][W^j][W^k]$. Let $W$ be the average matrix defined as $W= 
	\sum_{i\in\cI}\vect{N}_i\vect{N}_{i^\dagger} = \sum_{l\in\cI}(\fun{Tr}\vect{N}_{l^\dagger})\vect{N}_{l}.$ The following is the rank formula, see Theorem~\ref{thm:Verlinde}. 
	
	\begin{theorem}
		Let $(X,\mpt{p}_\bullet)$ be an $n$-pointed curve of genus $g$. Then we have 
		\[
		\dim\cova{W^{i_\bullet}}_{(X,\mpt{p}_\bullet)} = (\vect{N}_{i_\bullet}\vect{W}^g)_0^0.
		\]
		Moreover, if $g > 1$, the dimension is equal to
		$
		\fun{Tr}(\vect{N}_{i_\bullet}\vect{W}^{g-1}).
		$
	\end{theorem}
	
	This paper is organized as follows. In Section~\ref{sec:prelim}, we review the necessary background on vertex operator algebras, chiral Lie algebras, coinvariants, conformal blocks, factorization, and the vector bundle property. In Section~\ref{sec:fusionrank}, we apply the factorization theorem and the vector bundle property to prove the associativity of fusion rules and to establish a rank formula for VOA-coinvariant bundles on $\overline{\mathscr{M}}_{g,n}$. In Section~\ref{sec:cfbontorus}, we study one-point conformal blocks on elliptic curves and prove the finite-generation of the formal-variable extension of coinvariants. In Section~\ref{sec:tracefunctions}, we prove the recursive formulas for the formal trace functions, and establish their convergence. In Section~\ref{sec:modularinv}, we construct the connection on the conformal block bundle $\mathscr{C}_\H(W)$, prove the basis theorem for its fibers, and derive the modular transformation law for trace functions associated to intertwining operators. Finally, in Section~\ref{sec:consequence}, we discuss consequences of the generalized modular invariance, including the induced projective representation of $\SL(2,\Z)$, the monodromy representation on $\mathscr{M}_{1,1}$, VOA-theoretical realizations of modular forms, and related questions on unitarity and congruence properties.

	\section{Preliminaries}\label{sec:prelim}
	\subsection{Vertex operator algebras and their modules}
	We review the notions of a vertex operator algebra and its modules \cite{Bor86,DL93,FBZ04,FHL93,LL04,Zhu94,Zhu96}.
	\begin{definition}\label{def:VOA}
		A \concept{vertex operator algebra} (\emph{VOA} for short) is a quadruple $(V, Y(\cdot,z), \vac, \upomega)$, throughout simply denoted by $V$, where 
		\begin{itemize}
			\item $V=\bigoplus_{k\in\N}V_k$ is a $\N$-graded vector space with $\dim{V_k}<\infty$; 
			\item $Y(\placeholder,z)\colon V\to \End{V}\dbrack{z^{\pm1}}$ is a linear map assigning to each $A\in V$ the \concept{vertex operator} \[Y(a,z)=\sum\limits_{n\in\Z}\vo{a}{n}z^{-n-1};\]
			\item $\vac\in V_0$ and $\upomega\in V_2$ are two distinguished vectors, called the \concept{vacuum vector} and the \concept{conformal vector} respectively.
		\end{itemize}
		These data must satisfy the following axioms:
		\begin{enumerate}[label={V\arabic*.}]
			\item \textbf{(Truncation property)} for all $a,b \in V$, $\vo{a}{n}b=0$ for sufficiently large $n$;
			\item \textbf{(Vacuum property)} $Y(\vac, z)=\id_V$;
			\item \textbf{(Creation property)} $Y(a, z){\vac}\in V\dbrack{z}$ and $\lim\limits_{z\to 0}Y(a, z){\vac}=a$ for $a\in V$;
			\item \textbf{(Jacobi identity)} for all $a,b \in V$, 
			\begin{multline*}
				\Res_{z}(Y(a,z)Y(b,w)\iota_{z^{-1}}F(z,w))
				-\Res_{z}(Y(b,w)Y(a,z)\iota_{z}F(z,w))\\
				=\Res_{z-w}(Y(Y(a,z-w)b,w)\iota_{z-w}F(z,w))
			\end{multline*}
			holds for every rational function $F(z,w)\in\C[z^{\pm1},w^{\pm1},(z-w)^{\pm1}]$;
			\item \textbf{(Virasoro relations)} defining $L(n)$ ($n\in\Z$) as the coefficients of $Y(\upomega, z)$: 
			\[
			Y(\upomega, z)=\sum_{n\in \mathbb{Z}}\vo{\upomega}{n}z^{-n-1}=\sum_{n\in \mathbb{Z}}\vo{L}{n}z^{-n-2},
			\]
			then there is a complex number $c$, called the \concept{central charge}, such that
			\[
			\Lie*{L(m)}{L(n)} = (m-n)L(m+n) + \frac{c}{12}\delta_{m+n,0}(m^3-m)\id_V;
			\]
			\item \textbf{($L(0)$-eigenspace decomposition)} $L(0)a=na$ for all homogeneous $a\in V_n$. That is to say, the \concept{weight} of $a$, denoted by $\wt a$, is the corresponding eigenvalue of $L(0)$; 
			\item \textbf{($L(-1)$-derivative property)} $Y(L(-1)a, z)=\odv{z} Y(a, z)$.
		\end{enumerate}
		In other words, $V$ is a module over the Virasoro algebra $\mathrm{Vir}_\om=\spn\{L(n), C:n\in \Z\}$. 
	\end{definition}
	
	\begin{definition}\label{def:twistedmodule}
		A \concept{weak $V$-module} is a vector space $M$ equipped with a linear map 
		$Y_{M}(\placeholder,z)\colon V\to \End{M}\dbrack{z}$ assigning to each $a\in V$ a \concept{vertex operator}
		\[
		Y_{M}(a,z) = \sum_{n\in\Z} \vo{a}{n} z^{-n-1},
		\]
		satisfying the following axioms: 
		\begin{enumerate}[label={M\arabic*.}]
			\item \textbf{(Truncation property)} for all $a\in V$ and $u\in M$, $\vo{a}{n}u=0$ for sufficiently large $n$;
			\item \textbf{(Vacuum property)} $Y_{M}(\vac,z)=\id_{M}$.
			\item \textbf{(Jacobi identity)} for all $a,b\in V$,
			\begin{multline*}
				\Res_{z}(Y_M(a,z)Y_M(b,w)\iota_{z^{-1}}F(z,w))
				-\Res_{z}(Y_M(b,w)Y_M(a,z)\iota_{z}F(z,w))\\
				=\Res_{z-w}(Y_M(Y(a,z-w)b,w)\iota_{z-w}F(z,w))
			\end{multline*}
			holds for every rational function $F(z,w)\in\C[z^{\pm1},w^{\pm1},(z-w)^{\pm1}]$.
		\end{enumerate}  
	\end{definition}
	\begin{definition}
		A weak $V$-module $M$ is called an \concept{admissible $V$-module} if it admits a subspace decomposition $M=\bigoplus_{n\in \N}M(n)$ such that 
		\begin{equation}\label{eq:admissible}
			\vo{a}{m}M(n)\subset M(\wt a-m-1+n)
		\end{equation}
		for any homogeneous $a\in V$, any $m\in \Z$, and any $n\in \N$.
	\end{definition}
	\begin{definition}
		An admissible module $M$ is said to be \textbf{of conformal weight $h\in\C$}, if  $L(0)$ acts semi-simply on $M$ and each $M(n)$ is the eigenspace $M_{h+n}$ of eigenvalue $h+n$. 
		
		An admissible module $M$ of conformal weight $h$ is called \textbf{ordinary} if $\dim M(n)<\infty$ for all $n\geq 0$. 
	\end{definition}
	
	\begin{definition}\label{def:propertiesofVOAs}
		A vertex operator algebra $V$ is called 
		\begin{itemize}
			\item \textbf{of CFT type} if $V_0=\C\vac$;
			\item \textbf{rational} if every admissible $V$-module is a direct sum of simple admissible $V$-modules;
			\item \textbf{$C_2$-cofinite} if the subspace \[C_2(V):=\spn\Set*{\vo{a}{-2}b \given a,b\in V}\] has finite codimension in $V$.
			\item \textbf{strongly rational} if  $V$ is of CFT type, simple, rational, $C_2$-cofinite, self-dual $V\cong V'$, and $V=\sum_{i\in \Lambda} \mathcal{U}(\mathrm{Vir}_\om)v^i$, where $v^i\in V$ are Virasoro highest-weight vectors. 
		\end{itemize}
	\end{definition}
	
	
	\begin{definition}[{\cite{FHL93,FZ92}}]\label{def:IO}
		Let $V$ be a VOA, and let $M^1,M^2$, and $M^3$ be admissible $V$-modules of conformal weights $h_1,h_2$, and $h_3$, respectively. Let $h:=h_1+h_2-h_3$. 
		An intertwining operator of type $\fusion$ is a linear map $I:M^1\ra \Hom(M^2,M^3)\dbrack{w^{\pm1}}w^{-h}$ assigning to each $v\in M^1$ a field 
		$$I(v,w)=\sum_{n\in \Z} \vo{v}{n} w^{-n-1-h},$$
		satisfying the following axioms:
		\begin{enumerate}[label={I\arabic*.}]
			\item \textbf{(Truncation property)} for all $v\in M^1$ and $v_2\in M^2$, $\vo{v}{n}u=0$ for sufficiently large $n$;
			\item \textbf{($L(-1)$-derivative property)} $I(L(-1)v,w)=\odv{w} I(v, w)$ for any $v\in M^1$.
			\item \textbf{(Jacobi identity)} for all $a\in V$ and $v\in M^1$, 
			\begin{multline*}
				\Res_{z}(Y_{M^3}(a,z)I(v,w)w^h\iota_{z^{-1}}F(z,w))
				-\Res_{z}(I(v,w)Y_{M^2}(a,z)w^h\iota_{z}F(z,w))\\
				=\Res_{z-w}(I(Y_{M^1}(a,z-w)v,w)w^h\iota_{z-w}F(z,w))
			\end{multline*}
			holds for every rational function $F(z,w)\in\C[z^{\pm1},w^{\pm1},(z-w)^{\pm1}]$.
		\end{enumerate}  
		The vector space of intertwining operators of type $\fusion$ is denoted by $I\fusion.$ Its dimension, denoted by $\Nusion$, is called the fusion rule among $M^1,M^2$, $M^3$. 
	\end{definition}
	

	
	\subsection{The Chiral Lie algebra and its coinvariants}\label{subsec:2e}
	Following \cite{Bor86}, there is a Lie algebra $\fL(V)$ ancillary to a VOA $V$. 
	The definition of $\fL(V)$ is given as follows: 
	
	Let $z$ be a formal variable and $\nabla$ the $\C$-linear operator $L(-1)\otimes\id + \id\otimes\odv{z}$ on the vector space $V\otimes\C[z^{\pm1}]$. 
	Then the underlying vector space of $\fL(V)$ is the quotient 
	\[\fL(V):=V\otimes\C[z^{\pm1}]/\Image{\nabla}.\] 
	Let $\lo{a}{m}$ denote the projection of $a\otimes z^m$ in $\fL(V)$. 
	Then the Lie algebra structure on $\fL(V)$ is given by the following formula: 
	\[
	\Lie*{\lo{a}{m}}{\lo{b}{n}}
	=\sum_{j\ge 0}\binom{m}{j} \lo{(\vo{a}{j}b)}{m+n-j}.
	\]
	
	Moreover, $\fL(V)$ has a natural gradation given by $\deg \lo{a}{m}:=\wt a-m-1$, 
	where $m\in\Z$ and $a$ is a homogeneous element of $V$. 
	Let $\fL(V)_n$ be the subspace of $\fL(V)$ spanned by elements of degree $n\in \Z$. 
	Then we have a triangular decomposition:
	\begin{equation*}
		\fL(V)=\fL(V)_{<0}\oplus \fL(V)_0\oplus \fL(V)_{>0}.
	\end{equation*}
	The above constructions extend to their $z$-adic completions (resp. $z^{-1}$-adic completions), resulting a topologically complete Lie algebra $\fL(V)^{\sfL}$ (resp. $\fL(V)^{\sfR}$) \cite{DGK25}. 
	
	
	Every weak $V$-module $M$ admits a $\fL(V)$-module structure via the Lie algebra homomorphism 
	\[\fL(V)\to \End(M),\ \lo{a}{m}\mapsto \vo{a}{m}:=\Res_{z}Y_{M}(a,z)z^{m},\quad a\in V, m\in \Z.\]
	Moreover, $M$ is an admissible $V$-module if and only if $M$ its $\fL(V)$-module structure respects the gradation of $\fL(V)$.
	
	There is an anti-isomorphism between the Lie algebras $\fL(V)^{\sfL}$ and $\fL(V)^{\sfR}$ given by the formula
	\begin{equation}\label{eq:def:theta}
		\theta(\lo{a}{m}):=\sum_{j\ge 0} \frac{(-1)^{\wt a}}{j!}
		\lo{(\vo{L}{1}^ja)}{\wt a-j-1 + \deg\lo{a}{m}}.
	\end{equation}
	
	Let $M$ be an admissible $V$-module. As a \emph{left} $\fL(V)^{\sfL}$-module, its \emph{graded dual space} $M'=\bigoplus_{n\in \N}M(n)^{\ast}$ naturally carries a \emph{right} $\fL(V)^{\sfR}$-module structure and hence a \emph{left} $\fL(V)^{\sfL}$-module structure via the anti-isomorphism $\theta$. 
	More precisely, for any $a\in V$ and $u'\in M'$, we have
	\[
	\langle\lo{a}{m}.u', u\rangle= \langle u', \theta(\lo{a}{m})u\rangle,\quad u\in M.
	\]
	Moreover, the above $\fL(V)^{\sfL}$-module structure on $M'$ is \emph{coherent} in the sense that it arises from a $V$-module structure, see \cite{FHL93}

	Following \cite[\S 2]{DGT24}, for each stable complex algebraic curve $X$, we can construct a left $\mathscr{D}$-module $(\cV,\nabla)$ on $X$ from the vertex operator algebra $V$. 
	\begin{definition}
		The \textbf{Chiral Lie algebra} $\L_{\square}(V)$ of $V$ on $X$ is the sheaf of de Rham cohomology of the $\mathscr{D}$-module $(\cV,\nabla)$ 
		\[
		U\longmapsto\H^0(U,\cV\otimes\omega_X/\Image{\nabla}),
		\]
		where $\omega_X$ is the dualizing sheaf.
	\end{definition}
	
	Let $\mpt{p}_\bullet$ be an $n$-tuple of distinct points on $X$. For each $i$, let $D_{\mpt{p}_i}^\times$ be the punctured formal disk at $\mpt{p}_i$ on $X$, $z_i$ a local coordinate at $\mpt{p}_i$, and $\fL_{z_i}(V)^{\square}$ ($\square=\sfL,\sfR$ or empty) the ancillary Lie algebras of $V$ with the formal variable $z_i$. 
	Then there is a Lie algebra homomorphism
	\[
	\iota_{z_i}\colon\L_{X\setminus\mpt{p}_\bullet}(V)\longrightarrow\L_{D_{\mpt{p}_i}^\times}(V)\cong\fL_{z_i}(V)^{\sfL},
	\]
	where the first homomorphism $\iota_{\mpt{p}_i}$ comes from the pullback along $D_{\mpt{p}_i}^\times\to X$, while the second isomorphism is given by 
	\[
	b\otimes f\longmapsto 
	\Res_{z_i=0}(Y[b,z_i]\iota_{z_i}f),
	\]
	where $Y[b,z_i]:=\sum_{n\in\Z}\lo{b}{n}z_i^{-n-1}$.
	
	\begin{definition}\label{def:coinv}
		Let $M^\bullet$ be an $n$-tuple of admissible $V$-modules. 
		When $X\setminus\mpt{p}_\bullet$ is affine, the space of \textbf{coinvariants} at $(X,\mpt{p}_\bullet,z_\bullet)$ is 
		\[
		\cova{M^\bullet}_{(X,\mpt{p}_\bullet,z_\bullet)}:=
		M^\bullet_{\L_{X\setminus\mpt{p}_\bullet}(V)} = 
		M^\bullet/\L_{X\setminus\mpt{p}_\bullet}(V). M^\bullet,
		\]
		where $\L_{X\setminus\mpt{p}_\bullet}(V)$ acts diagonally on the tensor product $M^\bullet$ via the sum of the homomorphisms $\iota_{z_i}$. Its linear dual space $(\cova{M^\bullet}_{(X,\mpt{p}_\bullet,z_\bullet)})^\ast$ is called the space of \textbf{conformal blocks} at $(X,\mpt{p}_\bullet,z_\bullet)$, denoted by $\mathscr{C}(X,\mpt{p}_\bullet,z_\bullet,M^\bullet)$.
		
		It was proved in \cite[Section 8]{DGT24} that the space of coinvariants does not depend on the choice of local coordinates $z_\bullet$. We sometimes denote the coinvariants $\cova{M^\bullet}_{(X,\mpt{p}_\bullet,z_\bullet)}$ by $\cova{M^\bullet}_{(X,\mpt{p}_\bullet)}$ for simplicity, see also \cite{FBZ04,Zhu94}. 
	\end{definition}
	
	It is well-known that there is a one-to-one correspondence between the space of conformal blocks on three-pointed $\PP^1$ and the space of intertwining operators of VOAs. 
	\begin{proposition}\label{prop:threepointcfbIO} \cite{TUY89,FBZ04,NT05,GLZ25}
		Let $M^1,M^2$, and $M^3$ be irreducible ordinary $V$-modules. Then there is an isomorphism between the space of three-point conformal blocks on sphere $\mathscr{C}((\PP^1,\mpt{p}_\bullet,z_\bullet), (M^3)'\otimes  M^1\otimes  M^2) $ and the space of intertwining operators $ I\fusion$, where $(M^3)'$ is the contragredient $V$-module  \cite{FHL93}. These vector spaces are finite-dimensional if $V$ is $C_2$-cofinite. 
		In particular, 
		\begin{equation}\label{eq:fusioncoinv}
			\Nusion = \dim\mathscr{C}((\PP^1,\mpt{p}_\bullet,z_\bullet), (M^3)'\otimes  M^1\otimes  M^2).
		\end{equation}
	\end{proposition}

	\subsection{Factorization and the vector bundle corollary}\label{subsec:2g}
	In general, the space of coinvariants $\cova{M^\bullet}_{(X,\mpt{p}_\bullet,z_\bullet)}$ and conformal blocks $\mathscr{C}(X,\mpt{p}_\bullet,z_\bullet,M^\bullet)$ is defined as a direct limit. 
	This construction extends to a \emph{smooth} family of stable $n$-pointed curves $(\sch{X}\to S,\mPt{P}_\bullet)$, resulting sheaves $\cova{M^\bullet}_{(\sch{X}/S,\mPt{P}_\bullet,z_\bullet)}$ and $\mathscr{C}(\sch{X}/S,\mPt{P}_\bullet,z_\bullet,M^\bullet)$ on $S$. 
	We thus obtain sheaves on the moduli space of \emph{coordinated} stable $n$-pointed curves. After a descend process, we obtain sheaves on the moduli space $\overline{\mathscr{M}}_{g,n}$ of stable $n$-pointed curves. 
	For more details, see \cite{DGT24}.
	We denote these sheaves by $\cova{M^\bullet}$ and $\mathscr{C}(M^\bullet)$. 
	
	We recall the following results from \cite{DGT24}.
	\begin{theorem}
		[Propagation of Vacua]\label{lem:POV}
		Assume $(\sch{X}\setminus\mPt{P}_\bullet)(S)$ is affine over $S$. 
		Then the linear map 
		\[
		M^\bullet\longrightarrow M^\bullet\otimes V,\qquad
		u\longmapsto u\otimes\vac
		\] 
		induces an isomorphism of $\O_S$-modules
		\[
		\cova{M^\bullet}_{(\sch{X}/S,\mPt{P}_\bullet,z_\bullet)}\overset{\cong}{\longrightarrow}\cova{M^\bullet\otimes V}_{(\sch{X}/S,\mPt{P}_\bullet\sqcup\mPt{Q},z_\bullet\sqcup{w})},
		\]
		where $\mPt{Q}$ is an $S$-point on $\sch{X}$ disjoint from $\mPt{P}_\bullet$ and $w$ is a local coordinate at $\mPt{Q}(S)$. 
		Moreover, this isomorphism is equivariant with respect to change of coordinates.
	\end{theorem}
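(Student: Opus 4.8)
\emph{Plan.} The plan is to show that the vacuum-insertion map $\rho\colon M^\bullet\to M^\bullet\otimes V$, $u\mapsto u\otimes\vac$, descends to an isomorphism of the coinvariant sheaves, arguing directly in the family. Since $\rho$ does not touch the base directions it is manifestly $\O_S$-linear, and since $\vo{L}{n}\vac=0$ for all $n\ge -1$ the vector $\vac$ is fixed by the coordinate-change action; hence the induced map is $\O_S$-linear and equivariant under change of coordinates, and the only genuine content is bijectivity. I would establish bijectivity fibrewise but phrase every construction relatively over the affine base, so that the same arguments build an $\O_S$-linear inverse. The first, routine, point is that $\rho$ is well defined on coinvariants: for $\mu\in\L_{\sch{X}\setminus\mPt{P}_\bullet}(V)$, which is regular at $\mPt{Q}$, its component acting on the new $V$-slot is $\Res_{w}Y[b,w]\iota_w f$ with $\iota_w f$ containing only nonnegative powers of $w$; applied to $\vac$ this vanishes because $Y[b,w]\vac\in V\dbrack{w}$ by the creation property, so the residue has no $w^{-1}$ term. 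Thus $\mu.(u\otimes\vac)=(\mu.u)\otimes\vac$, and since $\L_{\sch{X}\setminus\mPt{P}_\bullet}(V)\subset\L_{\sch{X}\setminus(\mPt{P}_\bullet\sqcup\mPt{Q})}(V)$, the map $\rho$ carries relations to relations.

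\emph{Surjectivity.} I would show that every $u\otimes b$ is congruent, modulo $\L_{\sch{X}\setminus(\mPt{P}_\bullet\sqcup\mPt{Q})}(V)$, to an element of $M^\bullet\otimes\C\vac$, by induction on $\wt b$. The base case $\wt b=0$ is exactly $b\in\C\vac$, since $V$ is of CFT type. For the inductive step, write $b$ as a combination of vectors $\lo{c}{-m}b''$ with $m\ge 1$ and $\wt b''<\wt b$, which is possible because $V$ is generated from $\vac$ by the modes $\lo{c}{-m}$. The affineness of $\sch{X}\setminus\mPt{P}_\bullet(S)$ guarantees, via the vanishing of higher cohomology (the strong residue theorem), that any prescribed polar part at $\mPt{Q}$ is realized by a section of $\L_{\sch{X}\setminus(\mPt{P}_\bullet\sqcup\mPt{Q})}(V)$; choose $\mu$ whose polar part at $\mPt{Q}$ is $c\otimes w^{-m}\,dw$, so that its expansion there has leading mode $\lo{c}{-m}$. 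Then $0\equiv\mu.(u\otimes b'')$ equals $u\otimes b$ plus terms whose $V$-slot has strictly smaller weight: the remaining modes at $\mPt{Q}$ give $\lo{c}{k}b''$ with $k>-m$, and the cross-terms at the marked points $\mPt{P}_\bullet$ keep the $V$-slot equal to $b''$. Both are handled by the induction hypothesis, so $u\otimes b$ lands in $M^\bullet\otimes\C\vac$ modulo relations, proving $\rho$ surjective.

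\emph{Injectivity (the main obstacle).} Equivalently I must show the dual restriction map is surjective, i.e.\ every conformal block $\varphi$ on $(\sch{X},\mPt{P}_\bullet)$ extends to a conformal block $\tilde\varphi$ on $(\sch{X},\mPt{P}_\bullet\sqcup\mPt{Q})$ with $\tilde\varphi(u\otimes\vac)=\varphi(u)$. I would construct $\tilde\varphi$ by the propagation formula: for homogeneous $b\in V$ choose a global section $\eta_b$ of $\cV\otimes\omega$ on $\sch{X}$, regular away from $\mPt{P}_\bullet\sqcup\mPt{Q}$, whose expansion at $\mPt{Q}$ reproduces the insertion of $b$, and set $\tilde\varphi(u\otimes b)$ to be $\varphi$ applied to the sum of the induced modes of $b$ at the points $\mPt{P}_\bullet$. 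The strong residue theorem furnishes $\eta_b$ and shows the result is independent of the choice, because the ambiguity lies in $\L_{\sch{X}\setminus\mPt{P}_\bullet}(V)$, which $\varphi$ annihilates; the same theorem shows $\tilde\varphi$ annihilates $\L_{\sch{X}\setminus(\mPt{P}_\bullet\sqcup\mPt{Q})}(V)$, so that $\tilde\varphi$ is a conformal block, while $\eta_{\vac}$ may be taken to vanish, giving $\tilde\varphi(u\otimes\vac)=\varphi(u)$.

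\emph{Conclusion.} This produces a right inverse to the restriction map, hence $\rho$ is injective; combined with surjectivity and the $\O_S$-linearity and coordinate-equivariance noted above, $\rho$ descends to the asserted coordinate-independent isomorphism of $\O_S$-modules. I expect the delicate points to be precisely the existence and choice-independence of $\eta_b$ and the verification that $\tilde\varphi$ kills the full chiral Lie algebra; this is where the strong residue theorem and the $\mathscr{D}$-module formalism of \cite{DGT24} do the real work, whereas the forward direction and surjectivity use only the creation property, CFT type, and affineness.
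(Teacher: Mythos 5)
The paper does not actually prove this statement: it is recalled verbatim from \cite{DGT24} (see also \cite{DGT21} and \cite{FBZ04}), so there is no internal proof to compare yours against. Measured against the proof in those references, your outline follows the standard route. The reduction of well-definedness to the creation property (since $Y(b,w)\vac\in V\dbrack{w}$, pairing against a section regular at $\mPt{Q}$ gives vanishing residue), the use of affineness of $\sch{X}\setminus\mPt{P}_\bullet(S)$ to produce sections of $\cV\otimes\omega$ with prescribed principal part at $\mPt{Q}$, and the induction on $\wt b$ (which terminates because $V$ is $\N$-graded with $V_0=\C\vac$) are all correct and give surjectivity; in fact no induction is needed, since $b=\vo{b}{-1}\vac$ lets you trade $u\otimes b$ for $-\sum_i(\mu_i.u)\otimes\vac$ in a single step.

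The one place your argument is not yet a proof is injectivity. You correctly identify that the inverse on conformal blocks is the propagation functional $\tilde\varphi(u\otimes b):=-\varphi\bigl(\sum_i\eta_{b,i}.u\bigr)$, and that its independence of the choice of $\eta_b$ holds because two admissible choices differ by an element of $\L_{\sch{X}\setminus\mPt{P}_\bullet(S)}(V)$, which $\varphi$ annihilates. But the claim that ``the same theorem shows $\tilde\varphi$ annihilates the full chiral Lie algebra'' is not a consequence of the strong residue theorem alone: one must verify $\tilde\varphi(\nu.(u\otimes b))=0$ for sections $\nu$ with poles at $\mPt{Q}$ as well as at $\mPt{P}_\bullet$, and unwinding the definition this becomes a commutator identity between $\nu$ and the auxiliary sections $\eta_b$ that genuinely uses the Jacobi identity and the $\mathscr{D}$-module structure on $\cV$. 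That computation is the real content of the theorem in \cite{DGT24}, and your write-up defers it to citation rather than performing it. In short: right strategy, complete for well-definedness, surjectivity, $\O_S$-linearity and coordinate equivariance, but the decisive step of injectivity is acknowledged rather than proved.
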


	Let $\spW$ be a set of representatives of isomorphism classes of irreducible $V$-modules. 
	For each $W\in\spW$, let $\vac^{W(0)}$ denote the unique element of $W(0)\otimes W(0)^\ast$ corresponding to $\id_{W(0)}\in\End W(0)$.
	\begin{theorem}[Factorization Theorem]\label{lem:FT}
		Let $(X,\mpt{p}_\bullet)$ be a stable $n$-pointed curve with exactly one simple node $\mpt{q}$. 
		The linear map 
		\[
		M^\bullet\longrightarrow \bigoplus_{W\in\spW}M^\bullet\otimes W\otimes W',\qquad
		u\longmapsto \bigoplus_{W\in\spW}u\otimes\vac^{W(0)}
		\]
		induces an isomorphism of vector spaces
		\[
		\cova{M^\bullet}_{(X,\mpt{p}_\bullet,z_\bullet)}\overset{\cong}{\longrightarrow}\bigoplus_{W\in\spW}\cova{M^\bullet\otimes W\otimes W'}_{(\widetilde{X},\mpt{p}_\bullet\sqcup\mpt{q}_\pm,z_\bullet\sqcup{w_\pm})},
		\]
		where $\widetilde{X}\to X$ is the normalization of $X$, $\mpt{q}_\pm$ are the preimages of $\mpt{q}$, and $w_\pm$ are the local coordinates at $\mpt{q}_\pm$ respectively. 
		Moreover, this isomorphism is equivariant with respect to change of coordinates.
		
	\end{theorem}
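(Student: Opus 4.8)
The plan is to realize the stated arrow at the level of coinvariants and to produce an inverse \emph{sewing} arrow at the level of conformal blocks, reducing the whole statement to a local study at the node $\mpt{q}$. Write $U=X\setminus\mpt{p}_\bullet$ and let $\nu\colon\widetilde X\to X$ be the normalization, so that $\nu$ restricts to an isomorphism $\widetilde U\simto U\setminus\{\mpt{q}\}$, where $\widetilde U=\widetilde X\setminus(\mpt{p}_\bullet\sqcup\mpt{q}_\pm)$. The first step is to compare the two chiral Lie algebras. A class in $\L_U(V)$ is represented by a section of $\cV\otimes\omega_X$ regular on $U$; its pullback along $\nu$ is regular on $\widetilde U$ but acquires Laurent expansions at $\mpt{q}_+$ and $\mpt{q}_-$, and the descent condition across the node forces these two expansions to be matched. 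I would show that this matching is governed by precisely the anti-isomorphism $\theta$ of \eqref{eq:def:theta}, which is the same formula defining the contragredient pairing between $W$ and $W'$. This identifies $\L_U(V)$, acting diagonally on $M^\bullet$, with the image of $\L_{\widetilde U}(V)$ acting on $M^\bullet\otimes W\otimes W'$ through the insertions at $\mpt{q}_\pm$.

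With this dictionary, well-definedness of $u\mapsto\bigoplus_W u\otimes\vac^{W(0)}$ on coinvariants becomes a direct computation. For $\xi\in\L_U(V)$ with pullback $\widetilde\xi$, the action of $\widetilde\xi$ on $M^\bullet\otimes W\otimes W'$ equals the original diagonal action on $M^\bullet$ plus two boundary contributions at $\mpt{q}_+$ and $\mpt{q}_-$. Evaluated on the canonical tensor $\vac^{W(0)}=\sum_i w_i\otimes w_i^\ast$ representing $\id_{W(0)}$, these two contributions are mutually adjoint under $\theta$ and therefore cancel, so $\widetilde\xi\cdot(u\otimes\vac^{W(0)})\equiv(\xi\cdot u)\otimes\vac^{W(0)}$ modulo $\L_{\widetilde U}(V)$, and the map descends. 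For surjectivity I would argue that every class on $\widetilde X$ is represented by a tensor $u\otimes\vac^{W(0)}$: using local elements of $\L_{\widetilde U}(V)$ supported near $\mpt{q}_\pm$ one lowers any insertion $w\otimes w'$ to its lowest-weight component by induction on weight, absorbing higher modes into the $M^\bullet$ factors, in the spirit of Propagation of Vacua (Theorem~\ref{lem:POV}). Rationality guarantees that only the irreducibles in $\spW$ contribute, and $C_2$-cofiniteness makes every coinvariant space involved finite-dimensional, so these manipulations are legitimate.

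The essential content, and the main obstacle, is injectivity, which I would obtain by passing to the dual (conformal-block) side. A block $\psi$ on the nodal curve restricts, via the insertion $W\otimes W'$ at $\mpt{q}_\pm$, to a family $\psi_W\in\mathscr{C}(\widetilde X,\mpt{p}_\bullet\sqcup\mpt{q}_\pm,M^\bullet\otimes W\otimes W')$; conversely one reconstructs a block on $X$ from such a family by the \emph{sewing} sum $u\mapsto\sum_W\sum_i\psi_W(u\otimes w_i\otimes w_i')$, where $\{w_i\}$ is a graded basis of $W$ and $\{w_i'\}$ the dual basis of $W'$. Injectivity of the forward map amounts to showing that sewing the restrictions recovers the original block, and the two hard points are (i) convergence of the sewing sum, which now ranges over all of $W$ rather than only $W(0)$, and (ii) invariance of the sewn functional under all of $\L_U(V)$. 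This is exactly where strong rationality is indispensable: $C_2$-cofiniteness bounds the growth of the graded dimensions and furnishes the finiteness guaranteeing convergence, while rationality and self-duality make the $W$--$W'$ pairing perfect and keep the sum over $\spW$ finite. I expect step (i) to be the central obstacle; once it is in place, the matching relation of the first paragraph makes the verification that sewing inverts the forward map a formal computation. Equivariance under change of coordinates is then automatic, since $\vac^{W(0)}$ and the residue pairing encoded by $\theta$ are coordinate-independent data.
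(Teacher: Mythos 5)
First, a point of orientation: the paper does not prove this statement at all. Theorem~\ref{lem:FT} is imported verbatim from \cite{DGT24} (``We recall the following results from \cite{DGT24}''), where its proof occupies a substantial part of that paper and relies on the Sewing Theorem over $\operatorname{Spec}\C\dbrack{q}$, the structure of $A(V)$-bimodules, and (in later refinements) the mode transition algebras of \cite{DGK23}. So there is no in-paper argument to compare against; what can be assessed is whether your sketch is a viable reconstruction of the \cite{DGT24} proof. At the level of strategy it is pointed in the right direction (pull back the chiral Lie algebra along the normalization, analyze the two insertions at $\mpt{q}_\pm$ via $\theta$, use sewing for the converse), but two of your central steps contain genuine gaps.

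The first gap is the claimed cancellation in well-definedness. You assert that the two boundary contributions of $\widetilde\xi$ at $\mpt{q}_+$ and $\mpt{q}_-$ are ``mutually adjoint under $\theta$ and therefore cancel'' on $\vac^{W(0)}$. The identity $(\xi\otimes 1+1\otimes\theta(\xi))\cdot\mathrm{id}_W=0$ holds for the \emph{full} invariant element $\sum_{i\ge 0}\vac^{W(i)}$, which lives only in a completion of $W\otimes W'$; it fails for $\vac^{W(0)}=\mathrm{id}_{W(0)}$ alone. Since $\omega_X$ on the nodal curve permits simple poles at $\mpt{q}_\pm$, the pulled-back section acts through modes $\lo{a}{m}$ with $m\ge -1$, which include degree-raising operators sending $W(0)\otimes W(0)^\ast$ into $W(d)\otimes W(0)^\ast$ and $W(0)\otimes W(d)^\ast$ with $d>0$; these two leftover terms live in different graded pieces and do not cancel each other. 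One must instead show they die in the quotient by $\L_{\widetilde X\setminus(\mpt{p}_\bullet\sqcup\mpt{q}_\pm)}(V)$, which is a nontrivial argument (and is exactly why the paper's Sewing Theorem is formulated with $\vac^W=\sum_i\vac^{W(i)}q^i$ over $\C\dbrack{q}$ rather than at the nodal fiber directly). The second gap is your proposed inverse: the ``sewing sum'' $u\mapsto\sum_W\sum_i\psi_W(u\otimes w_i\otimes w_i')$ over a graded basis of all of $W$ is an infinite sum of complex numbers with no deformation parameter to organize it; at the nodal fiber there is no $q$, so there is nothing for $C_2$-cofiniteness to make converge, and the expression is not even manifestly basis-independent. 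In \cite{DGT24} injectivity is not obtained by summing at $q=0$; it is deduced from the $\C\dbrack{q}$-linear sewing isomorphism together with right-exactness of coinvariants, Nakayama-type specialization, and the semisimplicity coming from rationality. Without these inputs the proposal does not close.
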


	First, we set up some notations for the smoothing property, see \cite[Section 4.3]{DGK25} or \cite[Section 8.1]{DGT24} for more details. 
	
	Let $S=\mathrm{Spec}(\C\dbrack{q})$, and let $X_0$ be a projective curve over $\C$, with a single node $\mpt{q}$ and smooth marked points $\mpt{p}_\bullet=(\mpt{p}_1,\dots,\mpt{p}_n)$ such that $X_0\backslash \mpt{q}_\bullet$ is affine, and formal coordiantes $t_\bullet=(t_1,\dots, t_n)$ at $\mpt{p}_\bullet$. Let $\eta: \widetilde{X_0}\ra X_0$ be the normalization of $X_0$ at $\mpt{q}$, with $\eta^{-1}(\mpt{q})=\mpt{q}_\pm$. The choice of formal coordinates $s_\pm$ at $\mpt{q}_\pm$ determine a smoothing family $(X,\mpt{p}_\bullet, t_\bullet)$ over $S$, with the central fiber given by $(X_0,\mpt{p}_{\bullet}, t_\bullet)$, and all remaining fibers smooth. Let $(\widetilde{X},\mpt{p}_\bullet \sqcup \mpt{q}_\pm, t_\bullet\sqcup s_\pm)$ be the trivial extension $X_0\times S$ with its markings. Then $s_\pm$ is the generator of the ideal of completed local $\C\dbrack{q}$-algebra of $\widetilde{X}$ at $\mpt{q}_\pm$ such that $s_+s_-=\mpt{q}$.
	For $W=\bigoplus_{i=0}^\infty W(i)\in \spW$, define 
	$$\vac^W:=\sum_{i\geq 0} \vac^{W(i)} q^i\in W\otimes W'\otimes \C\dbrack{q},$$
	where $\vac^{W(i)}:=\Id_{W(i)}\in \End(W(i))=W(i)\otimes W(i)^\ast$ for all $i$. 
	\begin{theorem}[Sewing]
		The map  $\Psi: M^\bullet\ra M^\bullet \otimes W\otimes W' \otimes \C \dbrack{q},\ u\mapsto u\otimes \vac^W$  induces a canonical $\C\dbrack{q}$-module isomorphism such that the following diagram commutes
		$$
		\begin{tikzcd}
			\left[M^\bu[\![q]\!]\right]_{(X,P_\bu,t_\bu)}   \arrow[r,"\Psi"]\arrow[d,two heads]&\bigoplus_{W\in \spW}\left[M^\bu \otimes W\otimes W'\right]_{(\widetilde{X_0}, P_\bu \sqcup Q_\pm, t_\bu \sqcup s_\pm)} [\![q]\!]\arrow[d, two heads]\\
			\left[M^\bu\right]_{(X_0,P_\bu, t_\bu)}\arrow[r,"\cong"] &\bigoplus_{W\in \spW}\left[M^\bu \otimes W\otimes W'\right]_{(\widetilde{X_0}, P_\bu \sqcup Q_\pm, t_\bu \sqcup s_\pm)} 
		\end{tikzcd}
		$$
	\end{theorem}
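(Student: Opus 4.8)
The plan is to check that $\Psi$ descends to the coinvariants, to identify its reduction modulo $q$ with the factorization isomorphism of \cref{lem:FT}, and finally to upgrade this to a $\C\dbrack q$-module isomorphism by a rank count. Accordingly the argument has three parts: (i) $\Psi$ is well defined on coinvariants; (ii) the square commutes, i.e. $\Psi\bmod q$ is the factorization map; (iii) $\Psi$ is an isomorphism. The genuine obstacle is (i), the \emph{sewing computation}, in which the gluing relation $s_+s_-=q$ and the grading weights $q^i$ appearing in $\vac^W$ must conspire; parts (ii) and (iii) are then formal.

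For (i) I would work in the sewing presentation, in which the total family $X$ is recovered from the trivial family $\widetilde{X_0}\times S$ by identifying formal neighborhoods of $\mpt{q}_+$ and $\mpt{q}_-$ via $s_+s_-=q$. A section $\xi\in\L_{X\setminus\mpt{p}_\bullet}(V)$, being regular across the smoothed node, acquires expansions at each $\mpt{p}_i$ and at the two branches $\mpt{q}_\pm$, the latter two tied together by $s_-=q/s_+$; pulling back exhibits $\xi$ as an element of the right-hand chiral Lie algebra $\L_{\widetilde{X_0}\setminus(\mpt{p}_\bullet\sqcup\mpt{q}_\pm)}(V)$ with coefficients in $\C\dbrack q$. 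Since the normalization is an isomorphism away from the node, the expansions of $\xi$ at the $\mpt{p}_i$ are unchanged, and splitting the diagonal action into its marked-point and node contributions gives
\[
\xi.(u\otimes\vac^W)=(\xi.u)\otimes\vac^W+u\otimes\bigl(\xi|_{\mpt{q}_\pm}.\vac^W\bigr),
\]
where $\xi|_{\mpt{q}_\pm}.\vac^W$ is the combined residue action at $\mpt{q}_+$ (on the $W$-factor) and at $\mpt{q}_-$ (on the $W'$-factor). The left-hand side vanishes in the target coinvariants because $\xi$ is a Lie algebra element there, so $\Psi(\xi.u)=(\xi.u)\otimes\vac^W\equiv-u\otimes(\xi|_{\mpt{q}_\pm}.\vac^W)$ modulo the right-hand relations. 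It therefore suffices to prove the node term vanishes.

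This is the key \emph{node-annihilation lemma}: the combined residue action at $\mpt{q}_\pm$ kills $\vac^W=\sum_{i\ge 0}\Id_{W(i)}\,q^i$. Each $\Id_{W(i)}\in W(i)\otimes W(i)^\ast$ is a coevaluation element, for which the action of an operator on the first tensor factor equals, up to transpose, its action on the second; by the definition of the contragredient module $W'$ through the anti-isomorphism $\theta$ of \cref{eq:def:theta}, the action at $\mpt{q}_-$ on the $W'$-factor is precisely this transposed action. The $q$-powers are then what remain to be balanced: the coordinate change $s_-=q/s_+$ turns a monomial $s_+^{\,m}$ at $\mpt{q}_+$ into $q^{m}s_-^{-m}$ at $\mpt{q}_-$, so the mode $\lo{b}{\ast}$ read off at $\mpt{q}_+$ and the one read off at $\mpt{q}_-$ differ by a shift equal to $\deg\lo{b}{\ast}=\wt b-\ast-1$, exactly the weight absorbed by the factor $q^i$ attached to $\Id_{W(i)}$. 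Together with the opposite orientations of the two branch residues, these contributions cancel, so $\xi|_{\mpt{q}_\pm}.\vac^W=0$ and (i) follows. Conceptually this is the $q$-deformation of the invariance of $\sum_i\Id_{W(i)}$ under the diagonal node action underlying \cref{lem:FT}, the residue theorem on $X_0$ being what separates $\xi$ into its marked-point and node parts.

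Parts (ii) and (iii) are then routine. Reducing modulo $q$ sends $\vac^W\mapsto\Id_{W(0)}=\vac^{W(0)}$, so $\Psi\bmod q$ is the map $u\mapsto\bigoplus_{W\in\spW}u\otimes\vac^{W(0)}$, which is the factorization isomorphism of \cref{lem:FT}; this is the commutativity of the square. For the isomorphism, the vector bundle property \cite{DGT24} makes the left-hand coinvariants a free $\C\dbrack q$-module of some finite rank $N$, while the right-hand side is $\bigl(\bigoplus_{W\in\spW}\cova{M^\bullet\otimes W\otimes W'}_{(\widetilde{X_0},\ldots)}\bigr)\otimes_\C\C\dbrack q$, a finite direct sum of finite-dimensional spaces and so free over $\C\dbrack q$; by (ii) and \cref{lem:FT} the two ranks coincide. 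In bases, $\Psi$ is an $N\times N$ matrix $A(q)$ over $\C\dbrack q$ whose reduction $A(0)$ is invertible by (ii); hence $\det A(q)$ has nonzero constant term, is a unit of $\C\dbrack q$, and $A(q)$ is invertible. Thus $\Psi$ is the desired $\C\dbrack q$-module isomorphism fitting into the commutative diagram.
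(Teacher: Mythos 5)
The paper does not actually prove this statement --- it is recalled from \cite[Section 8.1]{DGT24} and \cite[Section 4.3]{DGK23} --- so the comparison is with the argument there, whose overall shape your outline does reproduce: descent of $\Psi$ via annihilation of $\vac^W$ at the two branches of the node, identification of $\Psi\bmod q$ with the factorization map of \cref{lem:FT}, and a formal conclusion of invertibility. The serious problem is that your step (iii) is circular in the logical order of this paper and of \cite{DGT24}: the vector bundle property you invoke to make the left-hand side a free $\C\dbrack{q}$-module of finite rank is precisely the VB Corollary (\cref{lem:VB}), which is stated as a \emph{consequence} of the Sewing theorem. What is available beforehand is only coherence: $C_2$-cofiniteness makes $[M^\bu\dbrack{q}]_{(X,P_\bu,t_\bu)}$ a finitely generated $\C\dbrack{q}$-module. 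The non-circular endgame is: $\Psi\bmod q$ is an isomorphism by factorization, so $\Psi$ is surjective by Nakayama; since the target is free, the exact sequence $0\to K\to [M^\bu\dbrack{q}]\to F\to 0$ splits, whence $K/qK=0$ and, $K$ being finitely generated over the Noetherian ring $\C\dbrack{q}$, $K=0$. No rank count or determinant is needed, and freeness of the source is a conclusion rather than a hypothesis.

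Secondly, the node-annihilation lemma, which you rightly identify as the crux, is asserted rather than proved. The mechanism you describe is the correct one --- $\Id_{W(i)}$ is a coevaluation element, the substitution $s_-=q/s_+$ shifts modes by $q^{\deg}$, and the grading weights $q^i$ in $\vac^W$ absorb that shift --- but the cancellation hinges on the precise form of $\theta$ in \eqref{eq:def:theta}: the sign $(-1)^{\wt a}$ and the $\tfrac{1}{j!}\vo{L}{1}^j$ corrections arise from expanding a section at $\mpt{q}_-$ in the inverted coordinate, and one must check term by term that they reproduce the transpose of the $\mpt{q}_+$ action with the exact $q$-shift $q^{-\deg\lo{b}{m}}$. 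That verification is essentially the entire technical content of the cited proofs; writing ``these contributions cancel'' leaves the heart of the theorem unestablished. Everything else in your proposal (the splitting of the diagonal action into marked-point and node terms, and the identification of $\Psi\bmod q$ with the factorization isomorphism) is correct and matches the standard treatment.
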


	As a consequence, we have

	\begin{theorem}[VB Corollary]\label{lem:VB}
		The sheaves $\cova{M^\bullet}$ and $\mathscr{C}(M^\bullet)$ are vector bundles of finite rank on the moduli space $\overline{\mathscr{M}}_{g,n}$.
	\end{theorem}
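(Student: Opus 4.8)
The strategy is the standard criterion for local freeness: I would show that $\cova{M^\bullet}$ is $\O$-coherent and that its fiberwise dimension is locally constant on $\overline{\mathscr{M}}_{g,n}$, and then use that a coherent sheaf with locally constant fiber dimension on a smooth (hence reduced) base is a vector bundle of finite rank. Coherence is built into the construction of \S\ref{subsec:2e}: the sheaf is the cokernel of the $\O$-linear action of the Chiral Lie algebra on $M^\bullet\otimes\O$, and the $C_2$-cofiniteness of $V$ is exactly what bounds this cokernel and makes it of finite type. It thus remains to control the fiber dimension, which I would do separately on the interior $\mathscr{M}_{g,n}$ and in the direction transverse to the boundary divisors.

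On the open locus $\mathscr{M}_{g,n}$ of smooth curves, the coinvariants carry the connection coming from the $\mathscr{D}$-module $(\cV,\nabla)$, so that $\cova{M^\bullet}$ is an $\O$-coherent $\mathscr{D}_{\mathscr{M}_{g,n}}$-module; such a module on a smooth variety is automatically a vector bundle. Hence $\cova{M^\bullet}$ is locally free on the interior and its fiber dimension is locally constant there, the finiteness of the rank being supplied by the boundary analysis below together with the finiteness of $\spW$.

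To cross the boundary I would fix, as in \S\ref{subsec:2g}, a one-parameter smoothing $(X,\mpt{p}_\bullet,t_\bullet)$ over $S=\mathrm{Spec}(\C\dbrack{q})$ of a nodal curve $X_0$, and read off the transverse structure from the Sewing Theorem. There the map $\Psi$ is an isomorphism of $\C\dbrack{q}$-modules onto $\bigoplus_{W\in\spW}\cova{M^\bullet\otimes W\otimes W'}_{(\widetilde{X_0},\mpt{p}_\bullet\sqcup\mpt{q}_\pm,t_\bullet\sqcup s_\pm)}\dbrack{q}$. Because $\spW$ is finite and each summand is finite-dimensional—by an induction on $(g,n)$ that uses \cref{lem:FT} to reduce the genus or split off components and the Sewing Theorem to match a smooth fiber with its nodal degeneration, with base case the three-pointed genus-zero spaces \eqref{eq:fusioncoinv}, finite since $V$ is $C_2$-cofinite—the right-hand side is a free $\C\dbrack{q}$-module of finite rank. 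Therefore the coinvariants over the smoothing family are free of finite rank over $\C\dbrack{q}$, and the commuting square of the Sewing Theorem identifies their reduction modulo $q$ with the central-fiber coinvariants $\cova{M^\bullet}_{(X_0,\mpt{p}_\bullet,t_\bullet)}$. In particular the fiber dimension is unchanged as the node is smoothed: it is constant transverse to each boundary divisor.

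Putting the pieces together, the function $[X]\mapsto\dim\cova{M^\bullet}_{(X,\mpt{p}_\bullet)}$ is locally constant on all of $\overline{\mathscr{M}}_{g,n}$—constant on the interior by the flat connection and unchanged across each boundary stratum by Sewing—while the sheaf is coherent; since $\overline{\mathscr{M}}_{g,n}$ is a smooth Deligne--Mumford stack, this forces $\cova{M^\bullet}$ to be a vector bundle of finite rank. I expect the boundary step to be the main obstacle: one must check that the $\C\dbrack{q}$-module isomorphism produced by Sewing is genuinely compatible with reduction modulo $q$—this is the content of the commuting square—and must feed in the inductive finite-dimensionality of the factorization summands, so that freeness over $\C\dbrack{q}$, and hence constancy of the fiber dimension in the normal direction, actually follows. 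By contrast, the interior local freeness via the $\mathscr{D}$-module structure and the final reduced-base criterion are routine.
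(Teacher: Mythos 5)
The paper does not actually prove this statement: it is recalled from \cite{DGT24} and presented as an immediate consequence of the Sewing theorem, with no argument supplied in the text. Your outline --- coherence of $\cova{M^\bullet}$ from $C_2$-cofiniteness, local freeness on the interior $\mathscr{M}_{g,n}$ via the $\mathscr{D}$-module structure, and constancy of the fiber dimension transverse to the boundary via the Sewing isomorphism together with the finiteness of $\spW$ and of the factorization summands --- is precisely the argument of \cite{DGT24} that the paper is invoking, so it is consistent with (indeed a fleshed-out version of) the paper's approach.
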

	
	\begin{definition}
		A \textbf{smooth deformation} of a stable $n$-pointed curve $(X,\mpt{p}_\bullet)$ is a smooth family of stable $n$-pointed curves $(\sch{X}\to S,\mPt{P}_\bullet)$ with a specific isomorphism $(X,\mpt{p}_\bullet)\simto(\sch{X}_{\mpt{s}},\mPt{P}_\bullet(\mpt{s}))$, where $\mpt{s}$ is a point on $S$. 
		
		We say another stable $n$-pointed curve $(X',\mpt{p}'_\bullet)$ can be \textbf{smoothly deformed from} $(X,\mpt{p}_\bullet)$ if there is a smooth deformation $((\sch{X}\to S,\mpt{p}_\bullet),(X,\mpt{p}_\bullet)\simto(\sch{X}_{\mpt{s}},\mPt{P}_\bullet(\mpt{s})))$ such that there is a point $\mpt{s'}$ on $S$ and an isomorphism $(X',\mpt{p}'_\bullet)\simto(\sch{X}_{\mpt{s}'},\mPt{P}_\bullet(\mpt{s}'))$.
	\end{definition}

	For the simplicity of statement, we introduce the following notion.
	\begin{definition}\label{def:fact}
		A \textbf{factorization} consists of the following data:
		\begin{itemize}
			\item a smooth $n$-pointed curve $(X_0,\mpt{p}_\bullet)$;
			\item a stable $n$-pointed curve $(X_1,\mpt{p}_\bullet)$ with nodes $\mpt{q}_\bullet=(\mpt{q}_1,\cdots,\mpt{q}_m)$ such that it can be smoothly deformed from $(X_0,\mpt{p}_\bullet)$; and 
			\item a (disconnected) smooth $(n+2m)$-pointed curve $(\widetilde{X}_1,\mpt{p}_\bullet\sqcup\mpt{q}_{\bullet\pm})$, obtained as the normalization of $(X_1,\mpt{p}_\bullet)$, where $\mpt{q}_{\bullet\pm}$ are the preimages of $\mpt{q}_{\bullet}$. 
		\end{itemize}
	\end{definition}

	\section{Fusion ring and a rank formula }\label{sec:fusionrank}
	In this section, we give a geometric proof of the associativity of fusion rules and a rank formula for the VOA-coinvariants bundle. We assume that $V$ is a strongly rational VOA, see Definition~\ref{def:propertiesofVOAs}, with irreducible modules $\mathscr{W}=\{W^0,W^1,\ds, W^N\}$ throughout this section. 
	
	\subsection{The fusion ring over a strongly rational VOA}
	Consider the free abelian group $\Fu(V)=\bigoplus_{i=0}^N \Z [W^i]$, which is referred to as the {\bf fusion ring} of $V$. Define multiplication on $\Fu(V)$ by 
	\begin{equation}\label{eq:fusionmult}
		[W^i]\cdot [W^j]:=\sum_{i=0}^N 	\Nusion[W^i][W^j][W^k] [W^k],\quad 0\leq i,j\leq N.
	\end{equation}
	
	\begin{proposition}\label{prop:FusionId}
		The multiplication on $\Fu(V)$ has an identity. 
	\end{proposition}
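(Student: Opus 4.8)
The plan is to prove that the isomorphism class of $V$ itself is the multiplicative identity of $\Fu(V)$. Since $V$ is simple, we have $V\in\spW$, so this is a well-posed candidate, and the assertion reduces to the two families of fusion-rule identities $\Nusion[V][W][W^3]=\delta_{W,W^3}$ and $\Nusion[W][V][W^3]=\delta_{W,W^3}$ for all $W,W^3\in\spW$, where $\delta_{W,W^3}$ equals $1$ if $W\cong W^3$ and $0$ otherwise. These give $V\cdot W=W$ and $W\cdot V=W$ respectively, by the definition of the multiplication on $\Fu(V)$.

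First I would rewrite the left-hand fusion rule as a conformal-block dimension via \eqref{eq:fusioncoinv},
\[
\Nusion[V][W][W^3]=\dim\cova{(W^3)',V,W}_{(\PP^1;\infty,1,0)},
\]
in which the vacuum module $V$ is attached to the marked point $1$. Because the factors of the tensor product may be permuted together with their marked points without altering the coinvariants, I would move $V$ into the final slot and then invoke Propagation of Vacua (\cref{lem:POV}); this is legitimate since $\PP^1\setminus\{\infty,0\}=\C^\times$ is affine, and reading the isomorphism backwards removes the vacuum insertion to yield
\[
\cova{(W^3)',V,W}_{(\PP^1;\infty,1,0)}\cong\cova{(W^3)',W}_{(\PP^1;\infty,0)}.
\]

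The decisive step is then to evaluate this two-point block. Its dual is the space of linear functionals on $(W^3)'\otimes W$ annihilated by the diagonal action of the chiral Lie algebra $\L_{\C^\times}(V)$, i.e.\ the space of invariant pairings $(W^3)'\otimes W\to\C$. Using the description of the contragredient action through the anti-isomorphism $\theta$ of \eqref{eq:def:theta}, such invariant pairings are in natural bijection with $V$-module homomorphisms $W\to\bigl((W^3)'\bigr)'\cong W^3$. Hence $\dim\cova{(W^3)',W}_{(\PP^1;\infty,0)}=\dim\Hom_V(W,W^3)$, and Schur's lemma for the simple modules $W,W^3$ gives $\delta_{W,W^3}$, proving $V\cdot W=W$.

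For the right identity $W\cdot V=W$ the same mechanism applies: starting from $\cova{(W^3)',W,V}_{(\PP^1;\infty,1,0)}$, Propagation of Vacua deletes the vacuum now placed at $0$ to give $\cova{(W^3)',W}_{(\PP^1;\infty,1)}$, and the automorphism of $\PP^1$ fixing $\infty$ and sending $1\mapsto 0$ identifies this, via the coordinate independence built into the theorems of \cref{subsec:2f}, with the same two-point block $\cova{(W^3)',W}_{(\PP^1;\infty,0)}$ of dimension $\delta_{W,W^3}$. I expect the main obstacle to be the two-point computation in the preceding paragraph, namely pinning down precisely that the invariant pairings on $(W^3)'\otimes W$ correspond to $\Hom_V(W,W^3)$; everything else follows formally from Propagation of Vacua and Schur's lemma.
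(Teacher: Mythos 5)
Your proposal is correct and follows essentially the same route as the paper: identify $V$ as the candidate identity, use Propagation of Vacua to delete the vacuum insertion from the three-point block, and identify the resulting two-point block's dual with $\Hom_V(W,W^3)$, which Schur's lemma evaluates to $\delta_{W,W^3}$. The only differences are cosmetic — you spell out the $\Hom_V$ identification and treat the left and right identities separately (the latter amounting to the symmetry argument the paper defers to its commutativity proposition).
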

	\begin{proof}
		For simplicity, assume $V\in\spW$. 
		It suffices to show 
		\begin{equation}\label{eq:Nidentity}
			\Nusion[W^1][V][W^3]=\begin{cases*}
				1 & if $W^1\cong W^3$, \\
				0 & otherwise.
			\end{cases*}
		\end{equation}
		By \nameref{lem:POV}, $\cova{W^1\otimes V\otimes (W^3)'}_{(\PP^1,(0,1,\infty))}\cong\cova{W^1\otimes(W^3)'}_{(\PP^1,(0,\infty))}$. 
		On the other hand, the dual space of $\cova{W^1\otimes(W^3)'}_{(\PP^1,(0,\infty))}$ is precisely $\Hom_V(W^1,W^3)$. 
		Then the statement follows.
	\end{proof}
	\begin{proposition}\label{prop:FusionComm}
		The multiplication on $\Fu(V)$ is commutative.
	\end{proposition}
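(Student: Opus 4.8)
The plan is to reduce commutativity to the symmetry of the fusion rules and then read off that symmetry from an automorphism of $\PP^1$. Unwinding the definition of the product on $\Fu(V)$, the identity $W^1\cdot W^2 = W^2\cdot W^1$ amounts to
\[
\Nusion[W^1][W^2][W^3] = \Nusion[W^2][W^1][W^3]
\qquad\text{for all } W^1,W^2,W^3\in\spW.
\]
By \eqref{eq:fusioncoinv} the two sides are
\[
\dim\cova{(W^3)',W^1,W^2}_{(\PP^1;\infty,1,0)}
\qquad\text{and}\qquad
\dim\cova{(W^3)',W^2,W^1}_{(\PP^1;\infty,1,0)},
\]
so it suffices to identify these two spaces of coinvariants, or merely to match their dimensions.

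The key geometric input is the affine automorphism $\gamma(z)=1-z$ of $\PP^1$, which fixes $\infty$ and interchanges $0$ and $1$. I would regard it as a map from the three-pointed line underlying the first configuration to the one underlying the second: it sends the point $\infty$ carrying $(W^3)'$ to the point $\infty$ carrying $(W^3)'$, the point $1$ carrying $W^1$ to the point $0$ carrying $W^1$, and the point $0$ carrying $W^2$ to the point $1$ carrying $W^2$. Thus $\gamma$ is a label-preserving isomorphism between the two curves-with-insertions, once one also reindexes the marked points. Since the chiral Lie algebra acts diagonally on a symmetric tensor product, such a reindexing induces a canonical isomorphism of coinvariants; and since the sheaf $\cova{M^\bullet}$ descends to $\overline{\mathscr{M}}_{g,n}$ (see \cref{subsec:2e}), the dimension of the coinvariants does not depend on the chosen local coordinates, so the coordinate change produced by $\gamma$ is harmless. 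Combining these observations yields the equality of dimensions, hence the symmetry of the fusion rules and thus commutativity.

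The main obstacle is bookkeeping rather than geometry: one must check that the relabeling of insertions really produces the same coinvariant subspace up to the symmetry of the tensor product, and that the local coordinates transported by $\gamma$ do not change the dimension. Both facts are built into the coordinate-independent, moduli-theoretic formulation of coinvariants recalled in \cref{subsec:2e}. A slicker way to package the argument, avoiding explicit coordinate tracking, is to note that $\mathrm{PGL}(2,\C)$ acts transitively on ordered triples of distinct points of $\PP^1$; consequently $\dim\cova{(W^3)',W^1,W^2}$ on a three-pointed line depends only on the unordered assignment of the three modules to the three points, and commutativity is the instance of this symmetry that swaps the points carrying $W^1$ and $W^2$.
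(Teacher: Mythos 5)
Your proposal is correct and follows essentially the same route as the paper: the paper likewise reduces commutativity to the symmetry $\Nusion[W^1][W^2][W^3]=\Nusion[W^2][W^1][W^3]$ and deduces it from the existence of an automorphism of $\PP^1$ fixing $\infty$ and interchanging $0$ and $1$ (your explicit $\gamma(z)=1-z$). Your additional remarks on coordinate independence and the $\mathrm{PGL}(2,\C)$-action are just a more careful unpacking of the same one-line argument.
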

	\begin{proof}
		By \eqref{eq:fusionmult}, we only need to show 
		\begin{equation}\label{eq:Ncommutative}
			\Nusion[W^1][W^2][W^3]=\Nusion[W^2][W^1][W^3]. 
		\end{equation}
		Since $0$ and $1$ are symmetric on $\PP^1$ (or, there is an isomorphism of $\PP^1$ interchange $0$ and $1$, or there is an isomorphism from $(\PP^1,(0,1,\infty))$ to $(\PP^1,(1,0,\infty))$), we must have $\cova{W^1\otimes W^2\otimes (W^3)'}_{(\PP^1,(0,1,\infty))}\cong\cova{W^2\otimes W^1\otimes (W^3)'}_{(\PP^1,(0,1,\infty))}$.
	\end{proof}
	\begin{theorem}\label{prop:FusionAsso}
		The multiplication on $\Fu(V)$ is associative.
	\end{theorem}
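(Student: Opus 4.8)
The plan is to convert associativity into a numerical identity on the fusion rules and then to prove that identity by computing the rank of a single conformal block bundle on $\overline{\mathscr{M}}_{0,4}$ at two different boundary points. Comparing the coefficient of $W^4$ in $(W^1\cdot W^2)\cdot W^3$ and in $W^1\cdot(W^2\cdot W^3)$, associativity of $\Fu(V)$ is equivalent to
\[
\sum_{W\in\spW}\Nusion[W^1][W^2][W]\,\Nusion[W][W^3][W^4]
=\sum_{W\in\spW}\Nusion[W^2][W^3][W]\,\Nusion[W^1][W][W^4]
\]
for all $W^1,W^2,W^3,W^4\in\spW$, which is precisely \eqref{1.1}. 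Before attacking this I would record the symmetry of genus-zero three-point blocks: because $\mathrm{PGL}_2$ acts $3$-transitively on $\PP^1$ and coinvariants depend only on the isomorphism class of the pointed curve (as in the proof of \cref{prop:FusionComm}), the number $N(A,B,C):=\dim\cova{A,B,C}_{\PP^1}$, defined using any three distinct marked points, is symmetric in its three arguments. In view of \eqref{eq:fusioncoinv} this says $N(A,B,C)=\Nusion[A][B][C']$, and it yields the symmetries $\Nusion[A][B][C]=\Nusion[B][A][C]$ and $\Nusion[A][B][C]=\Nusion[A][C'][B']$.

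Next I would consider the four-pointed genus-zero curves carrying $W^1,W^2,W^3,(W^4)'$ at their marked points, and the associated conformal block sheaf $\cova{W^1,W^2,W^3,(W^4)'}$ on $\overline{\mathscr{M}}_{0,4}\cong\PP^1$. By the VB Corollary (\cref{lem:VB}) this is a vector bundle of finite rank, and since $\overline{\mathscr{M}}_{0,4}$ is connected its rank is the same at every point. I then evaluate the rank at the two boundary points given by the nodal degenerations $\{12\,|\,34\}$ and $\{23\,|\,14\}$, each of which can be smoothly deformed from a smooth four-pointed $\PP^1$. At the node the Factorization Theorem (\cref{lem:FT}) identifies the fiber with $\bigoplus_{W\in\spW}$ of coinvariants on the normalization, and since that normalization is a disjoint union of two three-pointed $\PP^1$'s the coinvariants factor as a tensor product over the two components. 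The channel $\{12\,|\,34\}$ then yields $\sum_{W}N(W^1,W^2,W)\,N(W^3,(W^4)',W')$, while the channel $\{23\,|\,14\}$ yields $\sum_{W}N(W^2,W^3,W)\,N(W^1,(W^4)',W')$.

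To finish I would rewrite each factor through $N(A,B,C)=\Nusion[A][B][C']$, use the two symmetries above to push all contragredients onto the internal module $W$, and reindex the sum by the involution $W\mapsto W'$ of $\spW$. This identifies the first expression with the left-hand side and the second with the right-hand side of the displayed identity, so equating the two constant-rank computations proves \eqref{1.1} and hence associativity.

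The conceptual input is supplied entirely by \cref{lem:FT,lem:VB}; the remaining work is bookkeeping, and the step I expect to demand the most care is precisely that bookkeeping: tracking which of $W$ and $W'$ is inserted at each preimage of the node and threading it through the duality relation for $N(A,B,C)$, so that after reindexing the two sums land on exactly the indices appearing in \eqref{1.1}. A secondary point to verify is that both nodal curves genuinely lie in the boundary of one connected family over $\overline{\mathscr{M}}_{0,4}$, which is what allows constancy of the rank to force the two evaluations to coincide.
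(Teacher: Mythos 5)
Your proposal is correct and follows essentially the same route as the paper: degenerate the four-pointed $\PP^1$ carrying $W^1,W^2,W^3,(W^4)'$ along the two channels $\{12\,|\,34\}$ and $\{23\,|\,14\}$, use the VB Corollary to equate the ranks and the Factorization Theorem to compute each as a sum of products of three-point fusion rules. Your explicit bookkeeping with the symmetric quantity $N(A,B,C)$ and the reindexing $W\mapsto W'$ is just a more careful writing-out of steps the paper leaves implicit.
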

	\begin{proof}
		It is easy to see from \eqref{eq:fusionmult} that we only need to show 
		\begin{equation}\label{eq:Nassociative}
			\sum_{W\in\spW}\Nusion[W^1][W^2][W]\Nusion[W][W^3][W^4] = 
			\sum_{W\in\spW}\Nusion[W^2][W^3][W]\Nusion[W^1][W][W^4].
		\end{equation}
		Consider the following factorization:
		\[
		\begin{tikzpicture}[dot/.style={circle,draw,fill,inner sep=1pt},baseline=(current bounding box.center),scale=0.9,decoration=snake]
			\coordinate (A) at (0,0);
			\node[label=below:{$X_0$}] at ($(A)+(0,-1.5)$) {};
			\draw (A) circle (1.4);
			\node[dot,label=left:$\mpt{p}_1$] at ($(A)+(-0.7,0)$) {};
			\node[dot,label=below:$\mpt{p}_2$] at ($(A)+(0,-0.7)$) {};
			\node[dot,label=right:$\mpt{p}_3$] at ($(A)+(0.7,0)$) {};
			\node[dot,label=above:$\mpt{p}_4$] at ($(A)+(0,0.7)$) {};
			\path[draw,decorate,->] (2,0) -- (4,0);
			\coordinate (B) at (6,0);
			\node[label=below:{$X_1$}] at ($(B)+(0.2,-1.5)$) {};
			\draw ($(B)+({-sqrt(0.5)},{-sqrt(0.5)})$) circle (1);
			\draw ($(B)+({sqrt(0.5)},{sqrt(0.5)})$) circle (1);
			\node[dot] at (B) {};
			\node[dot,label=left:$\mpt{p}_1$] at ($(B)+(-1,-0.5)$) {};
			\node[dot,label=below:$\mpt{p}_2$] at ($(B)+(-0.5,-1)$) {};
			\node[dot,label=right:$\mpt{p}_3$] at ($(B)+(1,0.5)$) {};
			\node[dot,label=above:$\mpt{p}_4$] at ($(B)+(0.5,1)$) {};
			\path[draw,<-] (8,0) -- (9,0);
			\coordinate (C) at (11,0);
			\draw ($(C)+({-0.1-sqrt(0.5)},{-0.1-sqrt(0.5)})$) circle (1);
			\node[label=below:{$X_{1}^+$}] at ($(C)+(0.5,-1.2)$) {};
			\draw ($(C)+({0.1+sqrt(0.5)},{0.1+sqrt(0.5)})$) circle (1);
			\node[label=below:{$X_{1}^-$}] at ($(C)+(-0.8,1.7)$) {};
			\node[dot,label=left:$\mpt{p}_1$] at ($(C)+(-1.1,-0.6)$) {};
			\node[dot,label=below:$\mpt{p}_2$] at ($(C)+(-0.6,-1.1)$) {};
			\node[dot,label=left:$\mpt{q}_+$] at ($(C)+(-0.1,-0.1)$) {};
			\node[dot,label=right:$\mpt{p}_3$] at ($(C)+(1.1,0.6)$) {};
			\node[dot,label=above:$\mpt{p}_4$] at ($(C)+(0.6,1.1)$) {};
			\node[dot,label=right:$\mpt{q}_-$] at ($(C)+(0.1,0.1)$) {};
		\end{tikzpicture}
		\]
		Then, by \nameref{lem:VB} and \nameref{lem:FT}, we have 
		\begin{align*}
			\MoveEqLeft[1]
			\dim\cova{W^1\otimes W^2\otimes W^3\otimes (W^4)'}_{(X_0,\mpt{p}_1,\mpt{p}_2,\mpt{p}_3,\mpt{p}_4)} \\
			&= \sum_{W\in\spW}
			\dim\cova{W^1\otimes W^2\otimes W'}_{(X_1^+,\mpt{p}_1,\mpt{p}_2,\mpt{q}_+)}\cdot
			\dim\cova{W\otimes W^3\otimes (W^4)'}_{(X_1^-,\mpt{q}_-,\mpt{p}_3,\mpt{p}_4)} \\
			&= \sum_{W\in\spW}\Nusion[W^1][W^2][W]\Nusion[W][W^3][W^4].
		\end{align*}
		On the other hand, consider the following factorization:
		\[
		\begin{tikzpicture}[dot/.style={circle,draw,fill,inner sep=1pt},baseline=(current bounding box.center),scale=0.9,decoration=snake]
			\coordinate (A) at (0,0);
			\node[label=below:{$X_0$}] at ($(A)+(0,-1.5)$) {};
			\draw (A) circle (1.4);
			\node[dot,label=left:$\mpt{p}_1$] at ($(A)+(-0.7,0)$) {};
			\node[dot,label=below:$\mpt{p}_2$] at ($(A)+(0,-0.7)$) {};
			\node[dot,label=right:$\mpt{p}_3$] at ($(A)+(0.7,0)$) {};
			\node[dot,label=above:$\mpt{p}_4$] at ($(A)+(0,0.7)$) {};
			\path[draw,decorate,->] (2,0) -- (4,0);
			\coordinate (B) at (6,0);
			\node[label=below:{$X_1$}] at ($(B)+(-0.2,-1.5)$) {};
			\draw ($(B)+({-sqrt(0.5)},{sqrt(0.5)})$) circle (1);
			\draw ($(B)+({sqrt(0.5)},{-sqrt(0.5)})$) circle (1);
			\node[dot] at (B) {};
			\node[dot,label=left:$\mpt{p}_1$] at ($(B)+(-1,0.5)$) {};
			\node[dot,label=below:$\mpt{p}_2$] at ($(B)+(0.5,-1)$) {};
			\node[dot,label=right:$\mpt{p}_3$] at ($(B)+(1,-0.5)$) {};
			\node[dot,label=above:$\mpt{p}_4$] at ($(B)+(-0.5,1)$) {};
			\path[draw,<-] (8,0) -- (9,0);
			\coordinate (C) at (11,0);
			\draw ($(C)+({-0.1-sqrt(0.5)},{0.1+sqrt(0.5)})$) circle (1);
			\node[label=below:{$X_{1}^+$}] at ($(C)+(-0.6,-1)$) {};
			\draw ($(C)+({0.1+sqrt(0.5)},{-0.1-sqrt(0.5)})$) circle (1);
			\node[label=below:{$X_{1}^-$}] at ($(C)+(0.8,1.7)$) {};
			\node[dot,label=left:$\mpt{p}_1$] at ($(C)+(-1.1,0.6)$) {};
			\node[dot,label=below:$\mpt{p}_2$] at ($(C)+(0.6,-1.1)$) {};
			\node[dot,label=below:$\mpt{q}_+$] at ($(C)+(0.1,-0.1)$) {};
			\node[dot,label=right:$\mpt{p}_3$] at ($(C)+(1.1,-0.6)$) {};
			\node[dot,label=above:$\mpt{p}_4$] at ($(C)+(-0.6,1.1)$) {};
			\node[dot,label=above:$\mpt{q}_-$] at ($(C)+(-0.1,0.1)$) {};
		\end{tikzpicture}
		\]
		Then we have 
		\begin{align*}
			\MoveEqLeft[1]
			\dim\cova{W^1\otimes W^2\otimes W^3\otimes (W^4)'}_{(X_0,\mpt{p}_1,\mpt{p}_2,\mpt{p}_3,\mpt{p}_4)} \\
			&= \sum_{W\in\spW}
			\dim\cova{W^2\otimes W^3\otimes W'}_{(X_1^+,\mpt{p}_2,\mpt{p}_3,\mpt{q}_+)}\cdot
			\dim\cova{W^1\otimes W\otimes (W^4)'}_{(X_1^-,\mpt{q}_1,\mpt{p}_-,\mpt{p}_4)} \\
			&= \sum_{W\in\spW}\Nusion[W^2][W^3][W]\Nusion[W^1][W][W^4].
		\end{align*}
		Then \labelcref{eq:Nassociative} follows.
	\end{proof}
	
	By \cref{prop:FusionId,prop:FusionComm,prop:FusionAsso}, the fusion ring $\Fu(V)$ is a commutative ring.
	It turns out that $\Fu(V)$ is isomorphic to the Grothendieck ring of $V$-modules. 

	\subsection{A rank formula for the conformal block bundle}
	We fix a numbering \[W^\square\colon \cI=\Set{0,1,\cdots,N}\simto\spW\] such that $W^0$ is the simple module in $\spW$ isomorphic to $V$ itself. 
	Define the involution $\dagger$ on $\cI$ as follows: $W^{k^\dagger}$ is the simple module in $\spW$ isomorphic to $(W^k)'$.
	
	For a matrix $\vect{M}$, we use $\vect{M}_i^j$ to denote its $(i,j)$-entry. 
	The matrix $\vect{N}_i$ is defined as $(\vect{N}_i)_j^k = \vect{N}_{i,j}^k$, where $\vect{N}_{i,j}^k$ denotes the fusion rule $\Nusion[W^i][W^j][W^k]$.
	\begin{lemma}\label{lem:Ntranpose}
		The transpose $\vect{N}_i^{\dagger}$ of the matrix $\vect{N}_i$ is $(\vect{N}_i^{\dagger})_j^k=\vect{N}_{i,j^\dagger}^{k^\dagger}$.
	\end{lemma}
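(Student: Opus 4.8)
The plan is to unwind the definition of the transpose and reduce the claim to a single symmetry of fusion rules, which I would then establish geometrically by an automorphism of $\PP^1$, exactly as in the proof of \cref{prop:FusionComm}. By definition of the transpose,
\[
(\vect{N}_i^\dagger)_j^k=(\vect{N}_i)_k^j=\vect{N}_{i,k}^j=\Nusion[W^i][W^k][W^j],
\]
so the assertion $(\vect{N}_i^\dagger)_j^k=\vect{N}_{i,j^\dagger}^{k^\dagger}$ is equivalent to the fusion-rule symmetry
\[
\Nusion[W^i][W^k][W^j]=\Nusion[W^i][W^{j^\dagger}][W^{k^\dagger}].
\]
Since $W^{j^\dagger}\cong(W^j)'$ and $W^{k^\dagger}\cong(W^k)'$ by the definition of $\dagger$, and fusion rules depend only on isomorphism classes, this is the classical contragredient symmetry of intertwining operators; my aim is to obtain it from the conformal-block dictionary rather than from the algebraic definition.

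To do so I would pass to coinvariants through \labelcref{eq:fusioncoinv}, writing the left-hand side as $\dim\cova{(W^j)',W^i,W^k}_{(\PP^1;\infty,1,0)}$, where the three modules are placed at $\infty,1,0$ respectively. Consider the automorphism $\sigma\colon\PP^1\to\PP^1$, $z\mapsto 1/z$, which fixes $1$ and interchanges $0$ and $\infty$. It is an isomorphism of pointed curves $(\PP^1;\infty,1,0)\to(\PP^1;0,1,\infty)$ carrying the module sitting at each marked point to the same module at its image. Because the formation of coinvariants is functorial for isomorphisms of pointed curves and independent of the chosen local coordinates (the coordinate-equivariance recorded in \nameref{lem:POV} and \nameref{lem:FT}, and already invoked for commutativity), $\sigma$ induces an isomorphism
\[
\cova{(W^j)',W^i,W^k}_{(\PP^1;\infty,1,0)}\cong\cova{W^k,W^i,(W^j)'}_{(\PP^1;\infty,1,0)}
\]
that simply reverses the order of the three modules.

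Applying \labelcref{eq:fusioncoinv} again to the right-hand coinvariant space, now with $W^k$ occupying the point $\infty$, identifies its dimension with $\Nusion[W^i][(W^j)'][(W^k)']=\Nusion[W^i][W^{j^\dagger}][W^{k^\dagger}]=\vect{N}_{i,j^\dagger}^{k^\dagger}$. Chaining the three displays yields $(\vect{N}_i^\dagger)_j^k=\vect{N}_{i,j^\dagger}^{k^\dagger}$, which is the assertion. I expect the only delicate point to be the bookkeeping in this final step, namely tracking which module occupies $\infty$ and therefore which contragredient the dictionary \labelcref{eq:fusioncoinv} introduces; the geometric input itself (the existence of $\sigma$ together with coordinate independence) is precisely what was used for commutativity, so no new machinery is required.
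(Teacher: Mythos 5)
Your proof is correct and takes essentially the same route as the paper: both reduce the lemma to the identity $\vect{N}_{i,k}^j=\vect{N}_{i,j^\dagger}^{k^\dagger}$ and derive it from the dictionary \labelcref{eq:fusioncoinv} together with an automorphism of $\PP^1$ permuting two of the three marked points, using coordinate-independence of coinvariants. The only difference is immaterial bookkeeping --- you swap $0$ and $\infty$ via $z\mapsto 1/z$, while the paper's displayed equality of coinvariant spaces amounts to swapping a different pair of points.
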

	\begin{proof}
		It suffices to show $\vect{N}_{i,k}^j=\vect{N}_{i,j^\dagger}^{k^\dagger}$, which follows from 
		\[\dim\cova{W^i,W^k,(W^j)'}_{(\PP^1,0,1,\infty)}=\dim\cova{W^i,(W^j)',W^k}_{(\PP^1,0,1,\infty)}.\qedhere\]
	\end{proof}
	\begin{lemma}\label{lem:average}
		We have 
		\[
		\sum_{i\in\cI}\vect{N}_i\vect{N}_{i^\dagger} = \sum_{l\in\cI}(\fun{Tr}\vect{N}_{l^\dagger})\vect{N}_{l}.
		\]
		This matrix is called the \textbf{average matrix} of $\spW$ and is denoted by $\vect{W}$.
	\end{lemma}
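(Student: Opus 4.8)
The plan is to deduce this identity from the ring structure of $\Fu(V)$ established in \cref{prop:FusionComm,prop:FusionAsso} together with the permutation symmetry of the fusion rules, so that no new factorization argument is required. The starting point is that $\vect{N}_i$ is exactly the matrix of multiplication by $W^i$ in the basis $\spW$: by definition $(\vect{N}_i)_j^k=\vect{N}_{i,j}^k$ is the coefficient of $W^k$ in $W^i\cdot W^j$. Since $\Fu(V)$ is commutative and associative, the assignment $W^i\mapsto\vect{N}_i$ is a ring homomorphism into matrices, i.e.
\[
\vect{N}_i\vect{N}_j=\sum_{k\in\cI}\vect{N}_{i,j}^k\,\vect{N}_k ,
\]
which is just the entrywise associativity relation \labelcref{eq:Nassociative} rewritten using commutativity. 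Taking $j=i^\dagger$ and summing over $i\in\cI$ gives
\[
\sum_{i\in\cI}\vect{N}_i\vect{N}_{i^\dagger}
=\sum_{k\in\cI}\Big(\sum_{i\in\cI}\vect{N}_{i,i^\dagger}^k\Big)\vect{N}_k ,
\]
so everything reduces to the scalar identity $\sum_{i\in\cI}\vect{N}_{i,i^\dagger}^k=\fun{Tr}\vect{N}_{k^\dagger}$ for each $k$.

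For this I would exploit the full permutation symmetry of the fusion rule. By \eqref{eq:fusioncoinv} one has $\vect{N}_{a,b}^c=\dim\cova{(W^c)',W^a,W^b}_{(\PP^1,\infty,1,0)}$, and this dimension is symmetric under permutations of its three module-arguments: the transposition of the two inputs $W^a,W^b$ is \cref{prop:FusionComm}, while a transposition moving $(W^c)'$ is \cref{lem:Ntranpose}, and together these generate $S_3$. Applying this symmetry to the triple $\bigl((W^k)',W^i,W^{i^\dagger}\bigr)$ (recalling $W^{i^\dagger}\cong(W^i)'$ and $W^{k^\dagger}\cong(W^k)'$) yields the termwise identity
\[
\vect{N}_{i,i^\dagger}^k=\vect{N}_{k^\dagger,i}^i=(\vect{N}_{k^\dagger})_i^i ,
\]
since both sides are the dimension attached to the unordered triple $\{W^{k^\dagger},W^i,W^{i^\dagger}\}$. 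Summing over $i$ turns the right-hand side into $\sum_i(\vect{N}_{k^\dagger})_i^i=\fun{Tr}\vect{N}_{k^\dagger}$, which is the required scalar identity; renaming $k$ as $l$ gives the stated formula.

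A second, more geometric route (in the spirit of \cref{prop:FusionAsso}) would interpret both sides as $\dim\cova{W^j,(W^k)'}$ on a fixed smooth genus-one, two-pointed curve, computed through two degenerations. Pinching a non-separating cycle produces an irreducible nodal curve whose normalization is a four-pointed $\PP^1$; factorizing at that node via \cref{lem:FT} and then once more on the four-pointed sphere yields $\sum_{i,m}\vect{N}_{i,j}^m\vect{N}_{i^\dagger,m}^k=(\sum_i\vect{N}_i\vect{N}_{i^\dagger})_j^k$. Pinching a separating cycle instead splits off a one-pointed genus-one component, and factorization there gives $\sum_l(\fun{Tr}\vect{N}_{l^\dagger})\vect{N}_{l,j}^k$, using that the one-pointed torus labelled by $(W^l)'$ carries $\fun{Tr}\vect{N}_{l^\dagger}$ conformal blocks. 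As the two nodal curves are degenerations of one smooth curve in the connected space $\overline{\mathscr{M}}_{1,2}$, the VB Corollary (\cref{lem:VB}) forces the two counts to coincide.

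I expect no serious obstacle here, since the entire content is packaged in results already proved: the homomorphism property is associativity (\cref{prop:FusionAsso}), and the reduction to a trace is the permutation symmetry coming from \cref{prop:FusionComm,lem:Ntranpose}. The only place demanding genuine care is the bookkeeping of the contragredient involution $\dagger$ inside that permutation symmetry, namely checking that after one module is replaced by its dual the three insertions may still be freely permuted; this is precisely what \cref{lem:Ntranpose} supplies, and it is what makes $\fun{Tr}\vect{N}_{k^\dagger}$ (rather than $\fun{Tr}\vect{N}_{k}$) appear on the right.
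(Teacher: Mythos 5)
Your primary argument is correct and is essentially the paper's own proof: the paper likewise works entrywise, applies the associativity relation \labelcref{eq:Nassociative} to turn $\sum_{i,l}\vect{N}_{i,j}^{l}\vect{N}_{i^\dagger,l}^{k}$ into $\sum_{i,l}\vect{N}_{i^\dagger,i}^{l}\vect{N}_{l,j}^{k}$, and then uses \labelcref{eq:Ncommutative} together with \cref{lem:Ntranpose} to identify $\sum_i\vect{N}_{i^\dagger,i}^{l}$ with $\fun{Tr}\vect{N}_{l^\dagger}$; your packaging via the regular representation $W^i\mapsto\vect{N}_i$ and the scalar identity $\sum_i\vect{N}_{i,i^\dagger}^k=\fun{Tr}\vect{N}_{k^\dagger}$ is the same computation in slightly different clothing. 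Your second, geometric route through the two degenerations of a two-pointed genus-one curve in $\overline{\mathscr{M}}_{1,2}$ is a genuinely different and also valid argument (it is closer in spirit to how the paper proves \cref{prop:FusionAsso} and \cref{thm:Verlinde}), but it is not needed here and the paper does not use it for this lemma.
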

	\begin{proof} Let $k,j\in \{0,1,\ds,N\}$. Then 
		\begin{align*}
			\Big(\sum_{i\in\cI}\vect{N}_i\vect{N}_{i^\dagger}\Big)_j^k 
			&= \sum_{i\in\cI}\sum_{l\in\cI}\vect{N}_{i,j}^{l}\vect{N}_{i^\dagger,l}^{k} = \sum_{i\in\cI}\sum_{l\in\cI}\vect{N}_{i^\dagger,i}^{l}\vect{N}_{l,j}^{k} &\text{by \labelcref{eq:Nassociative}}\\
			&= \sum_{l\in\cI}\sum_{i\in\cI}\vect{N}_{l^\dagger,i}^{i}\vect{N}_{l,j}^{k} &\text{by \labelcref{eq:Ncommutative} and \cref{lem:Ntranpose}}\\
			&= \Big(\sum_{l\in\cI}(\fun{Tr}\vect{N}_{l^\dagger})\vect{N}_{l}\Big)_j^k.
		\end{align*}
	\end{proof}
	
	\begin{theorem}\label{thm:Verlinde}
		Let $(X,\mpt{p}_\bullet)$ be an $n$-pointed curve of genus $g$. Then we have 
		\[
		\dim\cova{W^{i_\bullet}}_{(X,\mpt{p}_\bullet)} = (\vect{N}_{i_\bullet}\vect{W}^g)_0^0.
		\]
		Moreover, if $g > 1$, the dimension is equal to
		\[
		\fun{Tr}(\vect{N}_{i_\bullet}\vect{W}^{g-1}).
		\]
	\end{theorem}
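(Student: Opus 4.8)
The plan is to evaluate the dimension on a single, conveniently chosen maximally degenerate stable curve, exploiting that this dimension is a deformation invariant. By \cref{lem:VB} the sheaf $\cova{W^{i_\bullet}}$ is a vector bundle on $\overline{\mathscr{M}}_{g,n}$, and since $\overline{\mathscr{M}}_{g,n}$ is connected (indeed irreducible) its rank is constant; hence $\dim\cova{W^{i_\bullet}}_{(X,\mpt{p}_\bullet)}$ depends only on $g$, $n$, and the modules $W^{i_\bullet}$, and may be computed at any stable curve of genus $g$ with the prescribed markings. Before doing so I would record a dictionary between matrix entries and coinvariants. From \eqref{eq:fusioncoinv} one has $(\vect{N}_i)_j^k=\vect{N}_{i,j}^k=\dim\cova{(W^k)',W^i,W^j}_{(\PP^1,\infty,1,0)}$; using \cref{lem:POV} to absorb or emit the vacuum one gets $(\vect{N}_i)_0^k=\delta^k_i$; and using the form of \cref{lem:average} one computes $\vect{W}_a^0=\fun{Tr}\vect{N}_a$. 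Throughout, the vanishing outer index $0$ corresponds to an insertion of the vacuum module $V=W^0$.

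The core of the argument is one explicit degeneration. I would build the stable curve $X_{\max}$ of genus $g$ with $n+2$ marked points whose dual graph is a \emph{caterpillar with handles}: a chain of $n$ trivalent rational components carrying the legs $W^{i_1},\dots,W^{i_n}$, followed by $g$ genus-one \emph{banana} pieces (each a pair of rational components joined by two nodes), with the two free ends of the spine carrying an extra vacuum marking. A direct count shows this graph has first Betti number $g$ and $3g+n-1$ nodes, so $X_{\max}$ is a genuine stable $(n+2)$-pointed curve of genus $g$. Applying \cref{lem:POV} twice to introduce the two vacuum points, and using constancy of the rank, we obtain
\[
\dim\cova{W^{i_\bullet}}_{(X,\mpt{p}_\bullet)}=\dim\cova{W^{i_1},\dots,W^{i_n},V,V}_{(X_{\max})}.
\]
Iterating \cref{lem:FT} once at each of the $3g+n-1$ nodes turns the right-hand side into a sum over all labelings of the edges by elements of $\spW$, of a product of three-point numbers $\dim\cova{-,-,-}_{\PP^1}$, one per trivalent component, with contragredient modules $W,W'$ placed across each node. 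By the dictionary, each leg component contributes a factor $\vect{N}_{i_j}$ and each banana contributes $\vect{W}=\sum_{i}\vect{N}_i\vect{N}_{i^\dagger}$, while the two vacuum ends clamp the outer spine indices to $0$; matching the edge-labeling sum with ordinary matrix multiplication yields precisely $(\vect{N}_{i_\bullet}\vect{W}^g)_0^0$.

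For the ``moreover'' part I would argue algebraically. Since $\vect{N}_{i_\bullet}\vect{W}^{g-1}$ is a product of the $\vect{N}_i$, it lies in the commutative algebra they span, so $\vect{N}_{i_\bullet}\vect{W}^{g-1}=\sum_{j\in\cI}c_j\vect{N}_j$ for some scalars $c_j$. Both $M\mapsto(M\vect{W})_0^0$ and $M\mapsto\fun{Tr}M$ are linear in $M$, and on a single generator $(\vect{N}_j\vect{W})_0^0=\sum_a(\vect{N}_j)_0^a\vect{W}_a^0=\sum_a\delta^a_j\,\fun{Tr}\vect{N}_a=\fun{Tr}\vect{N}_j$; hence they agree on the whole span, giving $(\vect{N}_{i_\bullet}\vect{W}^g)_0^0=\sum_j c_j\,\fun{Tr}\vect{N}_j=\fun{Tr}(\vect{N}_{i_\bullet}\vect{W}^{g-1})$, where the hypothesis $g>1$ ensures a genuine factor $\vect{W}$ is present to carry the computation. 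This equality also has a geometric reading: $\fun{Tr}(\vect{N}_{i_\bullet}\vect{W}^{g-1})$ is the edge-labeling sum read off the degeneration whose dual graph closes the spine of the caterpillar-with-$(g-1)$-handles into a loop, another maximal degeneration of the same genus-$g$ curve.

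I expect the main obstacle to be the combinatorics of the second paragraph: producing $X_{\max}$ as an honest stable curve with exactly the prescribed dual graph (and the right markings, so that the two vacuum ends realize the outer $0$ indices), and verifying that the iterated application of \cref{lem:FT} reproduces matrix multiplication with the contragredients correctly distributed under the involution $\dagger$ (so that the banana contributes $\sum_i\vect{N}_i\vect{N}_{i^\dagger}$ and not a mislabeled variant). By contrast, constancy of the rank, propagation of vacua, and the trace identity are essentially immediate from the recalled results.
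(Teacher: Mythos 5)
Your proposal is correct and rests on exactly the same pillars as the paper's proof: constancy of the rank from the VB Corollary, Propagation of Vacua to insert the two vacuum points at $0$ and $\infty$, the Factorization Theorem at each node, the identification \eqref{eq:fusioncoinv} of three-point coinvariants with fusion rules, and \cref{lem:average}. The difference is organizational. The paper argues by induction on the genus: the base case $g=0$ is your caterpillar (a chain of $n$ trivalent rational components with vacuum ends, giving $(\vect{N}_{i_1}\cdots\vect{N}_{i_n})_0^0$), and the inductive step pinches a single non-separating cycle, applies factorization at that one node to reduce to a genus-$g$ curve with two extra contragredient insertions, and absorbs the resulting $\sum_j\vect{N}_j\vect{N}_{j^\dagger}$ into one factor of $\vect{W}$ via \cref{lem:average}. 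You instead pass in one step to a maximal degeneration and do all the edge-label bookkeeping at once; this is a legitimate trade (no induction, but heavier combinatorics in verifying that each banana contributes $\vect{W}$ with the daggers placed correctly — precisely the point you flag, and it does go through using \labelcref{eq:Ncommutative} and \cref{lem:Ntranpose}). Note that the paper's inductive step only ever needs a two-component degeneration, so it never has to produce or analyze the banana graph. For the ``moreover'' part, the paper does a direct index computation expanding one factor of $\vect{W}$ and using \labelcref{eq:Nidentity} to collapse to a trace, whereas you argue by linearity on the span of the $\vect{N}_j$ after checking $(\vect{N}_j\vect{W})_0^0=\fun{Tr}\vect{N}_j$ on generators; both are valid, and in fact your argument (like the paper's) already works for $g\ge 1$, so the hypothesis $g>1$ is not essential to that step.
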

	\begin{proof}
		The second statement follows from the first. 
		Indeed, we have 
		\begin{align*}
			(\vect{N}_{i_\bullet}\vect{W}^g)_0^0 &=
			\sum_{l\in\cI}(\vect{N}_{i_\bullet}\vect{W}^{g-1}\vect{N}_{l}\vect{N}_{l^\dagger})_0^0 &\text{by \cref{lem:average}}\\
			&= \sum_{j,k,l\in\cI}\vect{N}_{l,0}^j(\vect{N}_{i_\bullet}\vect{W}^{g-1})_j^k\vect{N}_{l^\dagger,k}^0\\
			&= \sum_{j,k,l\in\cI}\delta_{l,j}(\vect{N}_{i_\bullet}\vect{W}^{g-1})_j^k\delta_{l,k} &\text{by \labelcref{eq:Nidentity}}\\
			&= \sum_{l\in\cI}(\vect{N}_{i_\bullet}\vect{W}^{g-1})_l^l 
			= \fun{Tr}(\vect{N}_{i_\bullet}\vect{W}^{g-1}).
		\end{align*}
		
		We prove the first statement by induction on $g$. 
		Consider the following factorization:
		\[
		\begin{tikzcd}
			{X_0=\PP^1\colon}
			{\begin{tikzpicture}[dot/.style={circle,draw,fill,inner sep=1pt}]
					\draw (0,0) ellipse (3 and 1);
					\node[dot,label=right:$0$] at (-3,0) {};
					\node[dot,label=$\mpt{p}_1$] at (-2,0) {};
					\node[dot,label=$\mpt{p}_2$] at (-1,0) {};
					\node at (0.5,0) {\huge\ldots};
					\node[dot,label=$\mpt{p}_n$] at (2,0) {};
					\node[dot,label=left:$\infty$] at (3,0) {};
			\end{tikzpicture}}\arrow[d,rightsquigarrow] \\
			{X_1\colon}
			{\begin{tikzpicture}[dot/.style={circle,draw,fill,inner sep=1pt}]
					\draw (-4,0) circle (1);
					\node[dot,label=right:$0$] at (-5,0) {};
					\node[dot,label=$\mpt{p}_1$] at (-4,0) {};
					\draw (-2,0) circle (1);
					\node[dot] at (-3,0) {};
					\node[dot,label=$\mpt{p}_2$] at (-2,0) {};
					\node[dot] at (-1,0) {};
					\node at (0,0) {\huge\ldots};
					\draw (2,0) circle (1);
					\node[dot] at (1,0) {};
					\node[dot,label=$\mpt{p}_n$] at (2,0) {};
					\node[dot,label=left:$\infty$] at (3,0) {};
			\end{tikzpicture}} \\
			{\widetilde{X}_1\colon}
			{\begin{tikzpicture}[dot/.style={circle,draw,fill,inner sep=1pt}]
					\draw (-4.4,0) circle (1);
					\node[dot,label=right:$0$] at (-5.4,0) {};
					\node[dot,label=$\mpt{p}_1$] at (-4.4,0) {};
					\node[dot,label=left:$\infty$] at (-3.4,0) {};
					\draw (-2,0) circle (1);
					\node[dot,label=right:$0$] at (-3,0) {};
					\node[dot,label=$\mpt{p}_2$] at (-2,0) {};
					\node[dot,label=left:$\infty$] at (-1,0) {};
					\node at (0,0) {\huge\ldots};
					\draw (2,0) circle (1);
					\node[dot,label=right:$0$] at (1,0) {};
					\node[dot,label=$\mpt{p}_n$] at (2,0) {};
					\node[dot,label=left:$\infty$] at (3,0) {};
			\end{tikzpicture}} \arrow[u]
		\end{tikzcd} 
		\]

		Then, by \nameref{lem:VB} and \nameref{lem:FT}, we have 
		\begin{align*}
			\MoveEqLeft[1]
			\dim\cova{V,W^{i_\bullet},V}_{(\PP^1,0,\mpt{p}_\bullet,\infty)} \\
			&= 
			\sum_{j_1,\cdots,j_{n-1}\in\cI}
			\prod_{\ast=1,\cdots,n}
			\dim\cova{W^{j_{\ast-1}},W^{i_\ast},(W^{j_{\ast}})'}_{(\PP^1,0,\mpt{p}_\ast,\infty)} \\
			&=
			\sum_{j_1,\cdots,j_{n-1}\in\cI}
			\vect{N}_{j_{0},i_{1}}^{j_{1}}
			\vect{N}_{j_{1},i_{2}}^{j_{2}}
			\cdots
			\vect{N}_{j_{n-1},i_{n}}^{j_{n}} \\
			&= (\vect{N}_{i_1}\vect{N}_{i_2}\cdots\vect{N}_{i_n})_{j_{0}}^{j_{n}},
		\end{align*}
		where $j_0=j_n=0$.
		On the other hand, 
		by \nameref{lem:POV}, 
		\[
		\dim\cova{W^{i_\bullet}}_{(\PP^1,\mpt{p}_\bullet)} = \dim\cova{V,W^{i_\bullet},V}_{(\PP^1,0,\mpt{p}_\bullet,\infty)}.
		\]
		Therefore, $\dim\cova{W^{i_\bullet}}_{(\PP^1,\mpt{p}_\bullet)} = (\vect{N}_{i_\bullet})_0^0$.
		
		Now, assume the first statement holds for $n$-pointed curves of genus $g$. 
		Let $X$ be an $n$-pointed curves of genus $g+1$. Consider the following factorization:
		\[
		\begin{tikzcd}
			{X_0=X\colon}
			{\begin{tikzpicture}[dot/.style={circle,draw,fill,inner sep=1pt}]
					\draw (0,0) ellipse (3 and 1);
					\node[dot,label=$\mpt{p}_1$] at (-2.2,0) {};
					\node[dot,label=$\mpt{p}_2$] at (-1.2,0.4) {};
					\node[rotate=-5] at (0.6,0.6) {\large\ldots};
					\node[dot,label=$\mpt{p}_n$] at (2,0.2) {};
					\draw[bend left=45] (-1.95,-0.25) to (-0.85,-0.25); \draw[bend right=45] (-2,-0.2) to (-0.8,-0.2); 
					\draw[bend left=45] (-0.57,0.17) to (-0.03,0.27); \draw[bend right=45] (-0.6,0.2) to (0,0.3); 
					\node at (0.6,0) {\huge\ldots};
					\draw[bend left=45] (1.3,-0.1) to (2,-0.1); \draw[bend right=45] (1.25,-0.08) to (2.05,-0.08); 
					\draw[bend left=45] (-0.55,-0.7) to (0.55,-0.7); \draw[bend right=45] (-0.6,-0.65) to (0.6,-0.65); 
			\end{tikzpicture}}\arrow[d,rightsquigarrow] \\
			{X_1\colon}
			{\begin{tikzpicture}[dot/.style={circle,draw,fill,inner sep=1pt}]
					\draw (0,0) ellipse (3 and 1);
					\node[dot,label=$\mpt{p}_1$] at (-2.2,0) {};
					\node[dot,label=$\mpt{p}_2$] at (-1.2,0.4) {};
					\node[rotate=-5] at (0.6,0.6) {\large\ldots};
					\node[dot,label=$\mpt{p}_n$] at (2,0.2) {};
					\draw[bend left=45] (-1.95,-0.25) to (-0.85,-0.25); \draw[bend right=45] (-2,-0.2) to (-0.8,-0.2); 
					\draw[bend left=45] (-0.57,0.17) to (-0.03,0.27); \draw[bend right=45] (-0.6,0.2) to (0,0.3); 
					\node at (0.6,0) {\huge\ldots};
					\draw[bend left=45] (1.3,-0.1) to (2,-0.1); \draw[bend right=45] (1.25,-0.08) to (2.05,-0.08); 
					\draw[bend left=45] (-0.6,-0.7) to (0.6,-0.7); \draw[bend right=60] (-0.65,-0.65) to coordinate (m) (0.65,-0.65); 
					\node[dot,label=$\mpt{q}$] at (m) {};
			\end{tikzpicture}} \\
			{\widetilde{X}_1\colon}
			{\begin{tikzpicture}[dot/.style={circle,draw,fill,inner sep=1pt}]
					\draw (1,-1) arc (-70:250:3 and 1);
					\node[dot,label=$\mpt{p}_1$] at (-2.2,0) {};
					\node[dot,label=$\mpt{p}_2$] at (-1.2,0.4) {};
					\node[rotate=-5] at (0.6,0.6) {\large\ldots};
					\node[dot,label=$\mpt{p}_n$] at (2,0.2) {};
					\draw[bend left=45] (-1.95,-0.25) to (-0.85,-0.25); \draw[bend right=45] (-2,-0.2) to (-0.8,-0.2); 
					\draw[bend left=45] (-0.57,0.17) to (-0.03,0.27); \draw[bend right=45] (-0.6,0.2) to (0,0.3); 
					\node at (0.6,0) {\huge\ldots};
					\draw[bend left=45] (1.3,-0.1) to (2,-0.1); \draw[bend right=45] (1.25,-0.08) to (2.05,-0.08); 
					\node[dot,label=$\mpt{q}_+$] at (-1,-1) {};
					\node[dot,label=$\mpt{q}_-$] at (1,-1) {};
					\draw (-1,-1) to[in=180,out=0] (0,-0.6) to[in=180,out=0] (1,-1);
			\end{tikzpicture}} \arrow[u]
		\end{tikzcd} 
		\]

		Then, by \nameref{lem:VB} and \nameref{lem:FT}, we have 
		\begin{align*}
			\MoveEqLeft[1]
			\dim\cova{W^{i_\bullet}}_{(X,\mpt{p}_\bullet)}\\
			&=\sum_{j\in\cI}\dim\cova{W^{i_\bullet},W^j,(W^j)'}_{(X,\mpt{p}_\bullet\sqcup\mpt{q}_\pm)} \\
			&=\sum_{j\in\cI}(\vect{N}_{i_\bullet}\vect{N}_{j}\vect{N}_{j^\dagger})_0^0 &\text{inductive hypothesis} \\
			&=(\vect{N}_{i_\bullet}\vect{W})_0^0. &\text{by \cref{lem:average}}
		\end{align*}
		The proof is finished.
	\end{proof}

	\section{One point conformal blocks on torus}\label{sec:cfbontorus}
	In this section, we first recall the notation and properties of elliptic functions following \cite{Zhu96}. We then describe the space of one-point conformal blocks on the torus $\mathscr{C}(E_\tau,\mpt{p},z,W)$ and introduce its formal-variable extension.

	\subsection{Elliptic functions}
	Recall the following power series in Section 3 of \cite{Zhu96}:
	\begin{equation}\label{eq:defPm+1}
		P_{m+1}(z,q)=\frac{(2\pi i)^{m+1}}{m!} \sum_{k=1}^\infty \left(\frac{k^mz^k}{1-q^k}+\frac{(-1)^{m+1}k^mz^{-k}q^k}{1-q^k} \right),
	\end{equation}
	which converges locally uniformly in the domain $\{(z,q)\in \C^2: |q|<|z|<1 \}$. 
	\begin{lemma}
		The limit function of $P_{m+1}(z,q)$ satisfies
		\begin{align}
			P_1(e^{2\pi i z},q)&=-\wp_1(z,\tau)+G_2(\tau)-\pi i,\\
			P_2(e^{2\pi i z},q)&=\wp_2(z,\tau)+G_2(\tau),\\
			P_k(e^{2\pi i z},q)&=(-1)^k \wp_k(z,\tau),\quad k\geq 3,
		\end{align}
		where $\wp_{k}(z,\tau)$ is the Weierstrass $\wp$-function,
		and $G_2(\tau)$ is the Eisenstein series. 
	\end{lemma}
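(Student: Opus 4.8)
The plan is to derive all three identities from the classical Lipschitz summation formula, which turns a one-dimensional lattice sum into a $q$-series, together with the cotangent expansion for the weight-one case. Writing $\zeta = e^{2\pi i z}$, the convergence region $\{|q| < |\zeta| < 1\}$ corresponds to $0 < \mathrm{Im}(z) < \mathrm{Im}(\tau)$, so that $\mathrm{Im}(z+m\tau) > 0$ precisely when $m \ge 0$; this sign dichotomy is what lets us split the lattice sum cleanly. The two analytic inputs are, for $\mathrm{Im}(w) > 0$ and $k \ge 2$, the formula $\sum_{n\in\Z}(w+n)^{-k} = \frac{(-2\pi i)^k}{(k-1)!}\sum_{d\ge 1}d^{k-1}e^{2\pi i d w}$ (with its conjugate for $\mathrm{Im}(w) < 0$), and for $k=1$ the cotangent expansion $\sum_{n\in\Z}(w+n)^{-1} = \pi\cot(\pi w) = \pi i - 2\pi i\sum_{d\ge 1}e^{2\pi i d w}$. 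I would first fix the precise normalizations of Zhu's functions, recalling in particular that $\wp_1(z,\tau)$ is built from the Weierstrass zeta function while $\wp_k$ for $k\ge 2$ is the (regularized) lattice sum with leading term $z^{-k}$, and that $G_2(\tau)$ is the weight-two Eisenstein series.

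For $k \ge 3$ the double lattice sum converges absolutely, hence is order-independent and free of subtleties. I would split it according to the sign of $m$, apply the Lipschitz formula to each inner sum over $n$, and then sum the resulting geometric series $\sum_{m\ge 0}q^{md} = (1-q^d)^{-1}$ and $\sum_{m\le -1}q^{|m|d} = q^d(1-q^d)^{-1}$ over $m$. The block $m\ge 0$ produces the $\zeta^d$ terms and the block $m\le -1$ the $\zeta^{-d}$ terms; after factoring out $(2\pi i)^k/(k-1)!$ and using $(-2\pi i)^k = (-1)^k(2\pi i)^k$, the result matches $P_k(\zeta,q)$ term by term, giving $P_k = (-1)^k\wp_k$. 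As an organizing principle and consistency check, termwise differentiation of the defining series yields the relation $\frac{d}{dz}P_k(e^{2\pi i z},q) = k\,P_{k+1}(e^{2\pi i z},q)$, and the Weierstrass functions obey the corresponding differential relations; together these link the three asserted identities, reduce the genuinely independent computations to the low-weight cases, and guarantee that any discrepancy between the two sides of an identity can only be a constant in $z$.

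The cases $k = 2$ and $k = 1$ are where the difficulty concentrates, since the relevant lattice sums are only conditionally convergent and the additive corrections depend on the order of summation and on the chosen normalizations of $\wp_1$, $\wp_2$, and $G_2$. The mechanism is the same Lipschitz (resp. cotangent) computation, but now the Eisenstein-summed series produced by summing over the $\Z$-direction before the $\tau$-direction differs from the regularized Weierstrass function by a constant: for $k = 2$ this is exactly the non-modularity constant of the weight-two Eisenstein series, accounting for the $+G_2(\tau)$ in $P_2 = \wp_2 + G_2$, while for $k=1$ the constant term $\pi i$ of $\pi\cot(\pi w)$ survives the summation over the $\tau$-direction and produces the $-\pi i$, the remaining weight-two discrepancy again contributing $+G_2(\tau)$ and giving $P_1 = -\wp_1 + G_2 - \pi i$. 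The main obstacle throughout is the rigorous bookkeeping of these conditionally convergent sums and the extraction of the exact constants; I would control it either by Hecke's trick, inserting a convergence factor $|z+\ell|^{-s}$ and letting $s \to 0^+$, or by comparing symmetric partial sums directly, and would pin down any residual constant of integration by matching the constant terms of the Laurent expansions at $z=0$ against the expansion of $\wp_k$ recorded earlier in the excerpt.
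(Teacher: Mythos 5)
The paper does not actually prove this lemma---it is recalled verbatim from Section 4 of \cite{Zhu96}---so there is no internal argument to compare against; your Lipschitz-summation derivation is precisely the classical one used there (expand $1/(1-q^k)$ as a geometric series in $q$, apply the Lipschitz formula to the inner sums over the $\Z$-direction, and recognize the resulting Eisenstein-ordered lattice sum), and it is correct in outline, including the clean treatment of the absolutely convergent cases $k\ge 3$. Two constants need care when you carry it out. First, for $\mathrm{Im}(w)>0$ the cotangent expansion is $\pi\cot(\pi w)=-\pi i-2\pi i\sum_{d\ge 1}e^{2\pi i dw}$, not $\pi i-2\pi i\sum_{d\ge 1}e^{2\pi i dw}$ (you dropped the $d=0$ term of $\pi i-2\pi i\sum_{d\ge 0}e^{2\pi i dw}$); this constant is exactly what survives the telescoping over the $\tau$-direction to produce the $-\pi i$ in the first identity, so the slip is not cosmetic. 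Second, your own consistency check $\frac{d}{dz}P_1(e^{2\pi i z},q)=P_2(e^{2\pi i z},q)$, combined with $\frac{d}{dz}\wp_1=-\wp_2$ and the asserted $P_2=\wp_2+G_2$, forces the weight-two correction in the first identity to be linear in $z$: the computation (or matching Laurent coefficients at $z=0$, as you propose) yields $P_1(e^{2\pi i z},q)=-\wp_1(z,\tau)+G_2(\tau)z-\pi i$, which is the form appearing in \cite{Zhu96}; the constant $G_2(\tau)$ printed in the statement here is a typo that your method will detect. With those two constants corrected, the proof goes through as you describe.
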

	Let \(\tau\in \mathbb H\) and 
	$
	\Lambda_\tau=\mathbb Z+\mathbb Z\tau .
	$
	Recall that the $\wp$-functions are defined by 
	\begin{align*}
		\wp_1(z,\tau)&:=
		\frac{1}{z}
		+
		\sum_{\omega\in \Lambda_\tau\setminus\{0\}}
		\left(
		\frac{1}{z-\omega}
		+
		\frac{1}{\omega}
		+
		\frac{z}{\omega^2}
		\right),\\
		\wp_m(z,\tau)
		&:=
		\frac{(-1)^{m-1}}{(m-1)!}
		\frac{\partial^{m-1}}{\partial z^{m-1}}
		\wp_1(z,\tau),\quad m\geq 2.
	\end{align*}
	The Eisenstein series
	$
	G_{2k}(\tau)
	=
	\sum_{\omega\in \Lambda_\tau\setminus\{0\}}
	1/\omega^{2k},
	$
	and $G_{2k-1}(\tau)=0$ for any $k\geq 1$. Note that for $m\geq 2$, $\wp_m(z,\tau)$ are doubly periodic meromorphic functions in $z$ with periods $1$ and $\tau$.   But $\wp_1(z,\tau)$ is only quasi-doubly periodic: 
	\[
	\wp_1(z+1,\tau)=\wp_1(z,\tau),\quad \wp_1(z+\tau,\tau)=\wp_1(z,\tau)-2\pi i.
	\]
	In particular, let $\mpt{p}$ be a marked point on $E_\tau$, with local coordinate $z$ centered at $0$, then $\wp_m(z,\tau)\in H^0(E_\tau\bs \mpt{p},\O_{E_\tau})$ for $m\geq 2$. Moreover, we have
	$$H^0(E_\tau\bs \mpt{p},\O_{E_\tau})=\spn\{1,\wp_m(z,\tau):m\geq 2 \},$$
	which is also generated by $\wp_2(z,\tau)$ and $\wp_3(z,\tau)$ as a $\C$-algebra.

	By the periodicity of $\wp_k$, we have the following properties about $P_{m+1}(z,q)$ which will be used in the next Section: 
	\begin{coro}\label{coro:Pfunctioncov}
		Let $w,x$ be complex variables, and $q=e^{2\pi i\tau}$. Then 
		\begin{enumerate}
			\item The power series $P_1\left(\frac{w}{x},q\right)$ and $P_1\left(\frac{wq}{x},q\right)-2\pi i$ converge and can be extended to the same meromorphic function $\widetilde{P}_1\left(\frac{w}{x},q\right)$ on $\{(w,x,q): |q|<|w|<1,\ |q|<|x|<1\}$. 
			\item For $m\geq 1$, the power series $P_{m+1}\left(\frac{w}{x},q\right)$ and  $P_{m+1}\left(\frac{wq}{x},q\right)$ converge and can be extended to the same meromorphic function $\widetilde{P}_{m+1}\left(\frac{w}{x},q\right)$ on $\{(w,x,q): |q|<|w|<1,\ |q|<|x|<1\}$. 
		\end{enumerate}
	\end{coro}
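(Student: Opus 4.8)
The plan is to reduce both bullets to a single functional equation relating $P_{m+1}(z,q)$ and $P_{m+1}(zq,q)$ under the substitution $z=w/x$. Since $q=e^{2\pi i\tau}$, writing $w/x=e^{2\pi i u}$ gives $wq/x=e^{2\pi i(u+\tau)}$, so passing from the first series to the second is exactly the shift $u\mapsto u+\tau$ of the elliptic variable. Thus the whole corollary will follow once I understand how $P_{m+1}(e^{2\pi i u},q)$ transforms under $u\mapsto u+\tau$, together with some bookkeeping of where the two series actually converge.

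First I would record the convergence regions. The series $P_{m+1}(z,q)$ converges for $|q|<|z|<1$; substituting $z=w/x$ and using $|q|<|w|,|x|<1$, one checks that $|q|<|w/x|$ holds automatically on $D:=\{(w,x,q):|q|<|w|<1,\ |q|<|x|<1\}$, so $P_{m+1}(w/x,q)$ converges precisely on $D_< := D\cap\{|w|<|x|\}$. The same computation with $z=wq/x$ shows $P_{m+1}(wq/x,q)$ converges precisely on $D_> := D\cap\{|w|>|x|\}$. These two open sets are disjoint, separated by the hypersurface $|w|=|x|$, so the two series are never simultaneously convergent. This is the crux of the matter, and the expected main obstacle: the comparison cannot be made termwise on the series and must instead pass through a common meromorphic continuation.

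Next I would produce that continuation from the preceding lemma, which identifies $P_{m+1}(e^{2\pi i u},q)$ with $(-1)^{m+1}\wp_{m+1}(u,\tau)$, up to the additive constants $G_2(\tau)-\pi i$ (for $m=0$) and $G_2(\tau)$ (for $m=1$); in each case the right-hand side is meromorphic in $u$ on all of $\C$. Because the left-hand side depends on $u$ only through $e^{2\pi i u}$, the right-hand side is automatically invariant under $u\mapsto u+1$, so it descends to a function $\widetilde P_{m+1}(z,q)$ of $z=e^{2\pi i u}\in\Ct$ that is meromorphic in $z$ (holomorphic in $q$, inherited from the Weierstrass functions), whose poles lie only at $z\in q^{\Z}$, and which restricts to the convergent series on the annulus $|q|<|z|<1$. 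Setting $z=w/x$ and noting that on $D$ the relation $w/x\in q^{\Z}$ forces $w=x$, I obtain a meromorphic function $\widetilde P_{m+1}(w/x,q)$ on all of $D$ with pole divisor $\{w=x\}$, agreeing with $P_{m+1}(w/x,q)$ on $D_<$.

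Finally I would read off the functional equation from the $\tau$-(quasi-)periodicity of the Weierstrass functions: $\wp_{m+1}(u+\tau,\tau)=\wp_{m+1}(u,\tau)$ for $m+1\ge 2$, while $\wp_1(u+\tau,\tau)=\wp_1(u,\tau)-2\pi i$. Since $u\mapsto u+\tau$ corresponds to $z\mapsto zq$, this yields $\widetilde P_{m+1}(zq,q)=\widetilde P_{m+1}(z,q)$ for $m\ge 1$ and $\widetilde P_1(zq,q)=\widetilde P_1(z,q)+2\pi i$. On $D_>$ the series $P_{m+1}(wq/x,q)$ converges and equals $\widetilde P_{m+1}((w/x)q,q)$, which by the functional equation equals $\widetilde P_{m+1}(w/x,q)$ for $m\ge 1$ and $\widetilde P_1(w/x,q)+2\pi i$ for $m=0$. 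Hence both $P_{m+1}(w/x,q)$ and $P_{m+1}(wq/x,q)$ (the latter minus $2\pi i$ when $m=0$) extend to the single meromorphic function $\widetilde P_{m+1}(w/x,q)$ on $D$, which is what both bullets assert. The $2\pi i$ discrepancy in the $P_1$ case is exactly the quasi-period of $\wp_1$; as a sanity check I would rederive it straight from the series via the elementary reflection identity $f_m(1/z)=(-1)^{m+1}f_m(z)-\delta_{m,0}$ for $f_m(z)=\sum_{k\ge 1}k^m z^k$, whose $m=0$ anomaly $f_0(z)+f_0(1/z)=-1$ is precisely what contributes the additive constant $2\pi i$.
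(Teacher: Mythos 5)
Your argument is correct and is exactly the route the paper intends: the paper offers only the one-line justification ``by the periodic property of $\wp_k$,'' and your proposal fills in precisely that argument — identify each series with a Weierstrass function via the preceding lemma, continue meromorphically in the elliptic variable, and read off the functional equation under $z\mapsto zq$ (with the $2\pi i$ for $P_1$ coming from the quasi-period of $\wp_1$). Your careful bookkeeping that the two series converge on the disjoint regions $|w|<|x|$ and $|w|>|x|$, so that the identification must go through the common continuation, is a worthwhile point the paper leaves implicit.
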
  
	
	The Weierstrass functions have the following Laurent series expansion around $z=0$: 
	\begin{equation}\label{expansionofwpm}
		\iota_z(\wp_m(z,\tau))=\frac{1}{z^m}+(-1)^m\sum_{k=1}^\infty \binom{2k-1}{m-1} G_{2k}(\tau) z^{2k-m},\quad m\geq 1,
	\end{equation}
	where the  binomial coefficients are defined to be zero for $2k<m$. In particular, $\wp_1(z,\tau)$ has the following expansion:
	\begin{equation}\label{eq:Laurentwp1}
		\iota_z (\wp_1(z,\tau))=z^{-1}-\sum_{\ell=1}^\infty G_{2\ell}(\tau)z^{2\ell-1}.
	\end{equation}
	
	Now we prove some technical properties about the Weierstrass $\wp$-functions which will be used in later sections.
	\begin{lemma}\label{lm:wpproperty}
		Let $\xi=q\frac{d}{dq}=\frac{1}{2\pi i}\frac{\partial}{\partial\tau}$. Then for any $m\geq 2$, 
		\begin{equation}\label{eq:propertyofxi}
			\xi(\wp_m(z,\tau))-\frac{1}{(2\pi i)^2} \frac{\partial }{\partial z}(\wp_m(z,\tau)\wp_1(z,\tau))\in H^0(E_\tau\bs\mpt{p},\O_{E_\tau}).
		\end{equation}
	\end{lemma}
	\begin{proof}
		Denote the left hand side by $F_m(z,\tau)$. As a function in $z$, clearly $F_m(z,\tau)$ is meromorphic with only possible pole at $z=0$ in the fundamental domain of the lattice $\Lambda_\tau$. It remains to show $F_m(z,\tau)$ is doubly periodic with periods $1$ and $\tau$.  
		
		Since $\wp_m(z+1,\tau)=\wp_m(z,\tau)$, we have $\frac{\partial}{\partial\tau}(\wp_m(z+1,\tau))=\frac{\partial}{\partial\tau}(\wp_m(z,\tau))$. Also, $\frac{\partial }{\partial z}(\wp_m(z,\tau)\wp_1(z,\tau))$ clearly has period $1$ for $z$. Thus, $F_{m}(z+1,\tau)=F_m(z,\tau)$. On the other hand, since $\wp_m(z+\tau,\tau)=\wp_m(z,\tau)$, it follows from the chain rule that
		\[
		\frac{\partial}{\partial z}(\wp_m(z+\tau,\tau))+\frac{\partial}{\partial\tau}(\wp_m(z+\tau,\tau))=\frac{\partial}{\partial\tau}(\wp_m(z,\tau)).
		\]
		Using $ \wp_1(z+\tau,\tau)=\wp_1(z,\tau)-2\pi i$ and $\wp_m(z+\tau,\tau)=\wp_m(z,\tau)$, we have 
		\begin{align*}
			&\xi(\wp_m(z+\tau,\tau))-\xi(\wp_m(z,\tau))=-\frac{1}{2\pi i } \frac{\partial }{\partial z}(\wp_m(z+\tau,\tau))\\
			&=\frac{1}{(2\pi i)^2} \frac{\partial}{\partial z} (\wp_m(z+\tau,\tau)\wp_1(z+\tau,\tau))-\frac{1}{(2\pi i)^2} \frac{\partial}{\partial z} (\wp_m(z,\tau)\wp_1(z,\tau)).
		\end{align*}
		Therefore, $F_{m}(z+\tau,\tau)=F_m(z,\tau)$ and so $F_{m}(z,\tau)\in H^0(E_\tau\bs\mpt{p},\O_{E_\tau})$. 
	\end{proof}
	
	$\wp_1(z,\tau)$ has the following quasi-modular property: 
	
	\begin{lemma}
		Let $\ga=\footnotesize{\begin{pmatrix}
				a&b\\c&d
		\end{pmatrix}}\in \SL(2,\Z)$. Then 
		\begin{equation}\label{eq:wp1modular}
			\wp_1\left(\frac{z}{c\tau+d},\ga\tau\right)=(c\tau+d) \cdot \wp_1(z,\tau)+2\pi i cz. 
		\end{equation}
	\end{lemma}
	\begin{proof}
		Note that $G_{2k}(\tau)$ is modular of weight $2k$ for $k\geq 2$ and is quasi-modular for $k=1$, see \cite{S73,Zhu96}:
		\[
		G_2(\ga\tau)= (c\tau+d)^2G_2(\tau)-2\pi i c(c\tau+d),\quad  G_{2\ell}(\ga\tau)=(c\tau+d)^{2\ell} G_{2\ell}(\tau),\quad \ell\geq 1. 
		\]
		Then by \eqref{eq:Laurentwp1}, we have the following identity of power series
		\begin{align*}
			\iota_{z}\left(\wp_1\left(\frac{z}{c\tau+d},\ga\tau\right)\right)&=\frac{c\tau+d}{z}-G_2(\ga\tau) \left(\frac{z}{c\tau+d}\right)-\sum_{\ell\geq 2}G_{2\ell}(\ga\tau)\left(\frac{z}{c\tau+d}\right)^{2\ell -1}\\
			&=(c\tau+d)\left(\frac{1}{z}-G_2(\tau)z\right)+2\pi i cz-\sum_{\ell\geq 2}z^{2\ell-1} (c\tau+d) G_{2\ell}(\tau)\\
			&=(c\tau+d) \cdot \iota_z(\wp_1(z,\tau))+2\pi i cz.
		\end{align*}
		Hence we have the function identity \eqref{eq:wp1modular}. 
	\end{proof}
	
	\subsection{One-point conformal blocks on torus}

	\subsubsection{Coordinate changed VOA and modules} We recall the construction of coordinate changed  VOA in \cite[Section 4.2]{Zhu96}. Let $(V,Y(\cdot,z),\vac,\om)$ be a VOA. Define 
	\[
	Y[\cdot,z]:V\ra \End(V)[\![z,z^{-1}]\!],\quad Y[a,z]=Y(a,e^{2\pi iz}-1) e^{2\pi iz\cdot \wt a}. 
	\]
	Assume $V$ is a sum of Virasoro highest-weight modules, then $(V,Y[\cdot ,z],\vac, \tilde{\om})$ is another VOA structure on $V$ and is isomorphic to the original VOA structure $(V,Y(\cdot,z),\vac,\om)$ on $V$, where $\tilde{\om}=(2\pi i)^2(\om-\frac{c}{24}\vac )$.

	Now let $(W,Y_W(\cdot,z))$ be an irreducible ordinary $V$-module. We similarly define 
	\[
	Y_W[\cdot,z]:V\ra \End(W)[\![z,z^{-1}]\!],\quad Y_W[a,z]=Y_W(a,e^{2\pi iz}-1) e^{2\pi iz\cdot \wt a}. 
	\]
	Write $Y_W[a,z]=\sum_{n\in \Z} a[n] z^{-n-1}$. Then 
	\begin{equation}\label{eq:defa[m]}
		a[m]=(2\pi i)^{-m-1} \Res_z \left(Y_W(a,z)\left(\ln(1+z)\right)^m (1+z)^{\wt a-1}\right).
	\end{equation}
	The exact same argument as  \cite[Theorem 4.2.1, 4.2.2]{Zhu96} shows that $(W,Y_W[\cdot,z])$ is an irreducible module over the VOA $(V,Y[\cdot ,z],\vac, \tilde{\om})$. 
	
	\begin{lemma}\label{lm:L[0]actiononW}
		With the Virasoro element $\widetilde{\om}$, write $L[n]=\widetilde{\om}[n+1]=\Res_z z^{n+1} Y_W[\widetilde{\om},z]$ for $n\in \Z$.  The VOA $(V,Y[\cdot ,z],\vac, \tilde{\om})$-module $(W,Y_W[\cdot,z])$ is ordinary, with the $L[0]$-conformal weight the same as the $L(0)$-conformal weight $h_W$.  In other words, 
		\[
		W=\bigoplus_{n=0}^\infty W[n],\quad W[n]=\spn\{v\in W: L[0]v=(h_W+n)v\}. 
		\]
		Furthermore, denote $W_{\leq m}= W(0)+W(1)+\ds +W(m)$ for all $m\geq 0$. Let $v\in W_{\leq n}\bs W_{\leq n-1}$ for some fixed $n$. Then $v=v_{[0]}+\ds +v_{[n]}$, with $v_{[i]}\in W[i]$ for all $i$ and $v_{[n]}\neq 0$. 
	\end{lemma}
	\begin{proof}
		Since $\ln(1+z)=\sum_{j\geq 1} (-1)^{j+1}z^j/j$, it follows from \eqref{eq:defa[m]} that 
		$$L[0]=(2\pi i )^2 \om[1]= L(0)+\sum_{i\geq 1} l_i L(i),\quad l_i\in \C.$$
		For any $v\in W(0)$, we have $L[0]v=L(0)v=h_W v$. i.e., $h_W$ is an eigenvalue of $L[0]$. 
		Since $L(i) W(k)\ssq W(k-i)$ for all $i\geq 0$, we have $L[0]. W_{\leq n}\ssq W_{\leq n}$ and $L[0]$ is a lower triangular linear operator on this space, with diagonal entries given by the eigenvalues of $L(0)$. Hence the only possible eigenvalues of $L[0]$ on $W_{\leq n}$ are $h_W+i$, where $0\leq i\leq n$. Thus, $h_W$ is the lowest eigenvalue of $L[0]$ on $W$, and $W=\bigoplus_{n=0}^\infty W[n]$.
		
		Finally, note that $L[0]$ acts on the quotient space $W_{\leq n}/W_{\leq n-1}$ by scalar multiplication $(h_W+n)\cdot \Id$. Write $v=v_{[0]}+\ds +v_{[m]}$. Then in the quotient space $W_{\leq n}/W_{\leq n-1}$, we have $\bar{v}\neq 0$ and 
		\[(h_W+n)\bar{v}=L[0].\bar{v}=\sum_{i=0}^m (h_W+i)\cdot \overline{v_{[i]}}.
		\]
		Hence $v_{[n]}\neq 0$ and $v_{[i]}\in W_{\leq n-1}$ for all $i\neq n$. Since the eigenvalue of $L[0]$ on $W_{\leq n-1}$ is at most $h_W+(n-1)$, we have $v_{[i]}=0$ for all $i>n$, and so $v=v_{[0]}+\ds +v_{[n]}$. 
	\end{proof}

	\subsubsection{The chiral Lie algebra associated to one-point torus}	
	Let $\tau\in \H$, consider the elliptic curve $E_\tau=\C/(\Z+\Z\tau)$. Let $\mpt{p}=[0]\in E_\tau$ be a marked point with local coordinate $z$  inherited from $\C$. 
	Note that $(E_\tau,\mpt{p})$ can also be obtained by first identifying $0$ and $\infty$ on  $(\PP^1,\infty,1,0)$, and then smoothing the node. This process leads to the a curve $\C^\times/\{w\sim q^n w:n\in \Z \}$ with marked point $[1]$ and local coordinate $w$ centered at $0$. Under the isomorphism $\C/(\Z+\Z \tau)\simeq \C^\times/\{w\sim q^n w \},\ [z]\mapsto [e^{2\pi i z}]$, the local coordinates $z$ and $w$ around the same marked point $\mpt{p}$ are related by $w=\phi(z)=e^{2\pi i z}-1$, see \cite[Section 4.2]{Zhu96}.
	
	In order to be consistent with the properties of trace functions, we use the (isomorphic) VOA structure $(V,Y[\cdot ,z],\vac, \tilde{\om})$ and module structure $(W,Y_W[\cdot,z])$ to define the space of coinvariants and conformal blocks on torus, see Section \ref{subsec:2e}.

	Note that the canonical bundle for the smooth elliptic curve is trivial: $\om_{E_\tau}=\O_{E_\tau}$. Thus, 
	\[
	H^0(E_\tau\bs \mpt{p},\om_{E_\tau}^{1-k})\cong 	H^0(E_\tau\bs \mpt{p},\O_{E_\tau})=\spn\{1,\wp_{m}(z,\tau):m\geq 2\}. 
	\]
	where $z$ is the local coordinate around $\mpt{p}$. 
	Since $E_\tau\bs \mpt{p}$ is affine, it follows from \cite[Section 2.6]{DGT24} that 
	\[
	\L_{E_\tau\bs \mpt{p}}(V)=	H^0(E_\tau\bs \mpt{p},\cV\otimes\omega_{E_{\tau}}/\Image{\nabla} )\cong \bigoplus_{k\geq 0}\left(V_{[k]}\otimes H^0(E_\tau\bs \mpt{p},\O_{E_\tau})\right)/\Image{\nabla},
	\]
	where $V_{[k]}$ is the eigenspace of $L[0]=\Res_z Y[\tilde{\om},z]z$ of eigenvalue $k$. 
	Therefore, the chiral Lie algebra $\L_{E_\tau\bs\mpt{p}}(V)$ has spanning elements
	$$a\otimes 1, \quad a\otimes \wp_{m}(z,\tau),\quad a\in V,\ m\geq 2, $$
	subject to the relations $L[-1]a\otimes \wp_m(z,\tau)=-a\otimes \frac{d}{dz}\wp_{m}(z,\tau).$

	Let  $W$ be an irreducible ordinary $V$-module of conformal weight $h_W$ attached at the marked point $\mpt{p}=[0]$, and let $z$ be the local coordinate. Then we have a Lie algebra homomorphism: 
	\[
	\rho_{z}: \L_{E_\tau\bs \mpt{p}}(V)\ra \fL_{z} (V)^{\sfL}\ra \mathfrak{gl}(W),
	\]
	which makes the VOA $(V,Y[\cdot ,z],\vac, \tilde{\om})$-module $(W,Y_W[\cdot,z])$ a module over the chiral Lie algebra $ \L_{E_\tau\bs \mpt{p}}(V)$. More precisely, the action of the chiral Lie algebra elements are given by
	\begin{equation}\label{eq:defspanningaction}
		\begin{aligned}
			(a\otimes 1).v:&= \Res_{z} Y_W[a,z]v=a[0]v,\\
			(a\otimes \wp_m(z,\tau)).v:&=\Res_z Y_W[a,z]\iota_z(\wp_m(z,\tau))v\\
			&=a[-m]v+(-1)^m\sum_{k=1}^\infty \binom{2k-1}{m-1}G_{2k}(\tau) a[2k-m]v,\quad m\geq 2,
		\end{aligned}
	\end{equation}
	where $a\in V$, $v\in W$.
		Note that the sum is finite since $a[2k-m]v=0$ when $k\gg 0$. 
		By Definition~\ref{def:coinv}, the space of coinvariant at $(E_\tau,\mpt{p},z)$ is the quotient space:
		\[
		[W]_{(E_\tau, \mpt{p},z)}=W/\L_{E_\tau\bs\mpt{p}}(V).W,
		\]
		where the action of $\L_{E_\tau\bs\mpt{p}}(V)$ on $W$ is given by \eqref{eq:defspanningaction}. Definition~\ref{def:coinv} specializes to the following definition for genus-one curves. 
		
		\begin{definition}\label{def:oneptcfb}
			Given $\tau\in \H$, let $E_\tau=\C/(\Z+\Z\tau)$ and $\mpt{p}\in E_\tau$ be a marked point, and let $z$ be a local coordinate around $\mpt{p}$ centered at $0$. The space of conformal blocks $\mathscr{C}(E_\tau,\mpt{p},z,W)$ is the dual space of $	[W]_{(E_\tau, \mpt{p},z)}$, which consists of linear functionals $\varphi(\tau)=\braket*{\varphi(\tau)}{\cdot}: W\ra \C$ such that $	\<\varphi(\tau)| (a\otimes f(z)).v\>=0$ for all $f(z)\in H^0(E_\tau\bs \mpt{p},\O_{E_\tau})$. In other words, 
			\begin{equation}\label{eq:defoneptcfb}
				\begin{aligned}
					&\braket*{\varphi(\tau)}{a[0]v}=0,\\
					&	\braket*{\varphi(\tau)}{a[-m]v+(-1)^m\sum_{k=1}^\infty \binom{2k-1}{m-1}G_{2k}(\tau) a[2k-m]v}=0,\quad m\geq 2, 
				\end{aligned}
			\end{equation}
			for all $a\in V$ and $v\in W$. We call $\varphi(\tau)$ a {\bf one-point conformal block} on the torus $E_\tau$. 
		\end{definition}
		
		It was proved in  \cite{Zhu94,FBZ04,DGT21,DGT24} that the space of conformal blocks does not depend to the choice of local coordinates around the marked points. In other words, if $z'$ is another local coordinate around $\mpt{p}$ centered at $0$, then 
		\[
		\mathscr{C}(E_\tau,\mpt{p},z,W)\cong \mathscr{C}(E_\tau,\mpt{p},z',W)
		\]
		as vector spaces. 
		
		\begin{remark}
			The reason why we denote a conformal block in $\mathscr{C}(E_\tau,\mpt{p},z,W)$ by $\varphi(\tau)$ is because later we will view $\varphi=\varphi(\cdot)$ as a section of a vector bundle of conformal blocks on $\H$.  $\tau\in \H$ is a preimage of $(E_\tau,\mpt{p})\in \mathscr{M}_{1,1}$ under the (universal) covering map $\pi: \H\ra  \mathscr{M}_{1,1}\cong [\mathrm{SL}_2(\Z)\bs\!\!\bs \H]$. 
		\end{remark}
		
		\subsection{Formal variable extension of the coinvariants}

		Let  $\widetilde{G}_{2k}(q)$ be the $q=e^{2\pi i\tau}$-expansion of the Eisenstein series $G_{2k}(\tau).$ To be consistent with the next section, we view it as a formal power series in $q$ at the moment. i.e.,
		\[
		\widetilde{G}_{2k}(q)=2\zeta(2k)+\frac{2(2\pi i)^{2k}}{(2k-1)!}\sum_{n=1}^\infty \si_{2k-1}(n) q^n\in \C[\![q]\!],
		\]
		and $\widetilde{G}_{2k-1}(q)=0$, for all $k\geq 1$. 
		Note that for all $k\geq 2$, $\widetilde{G}_{2k}(q)\in \C[\widetilde{G}_{4}(q),\widetilde{G}_{6}(q)]$, see \cite[Chapter 7]{S73}. Following the notation in \cite{Zhu96}, write
		\begin{equation}
			R=\C[\widetilde{G}_{2}(q),\widetilde{G}_{4}(q),\widetilde{G}_{6}(q)]\subset \C[\![q]\!]. 
		\end{equation}
		With the notation above, we rewrite \eqref{expansionofwpm} as 
		\[
		\iota_q(\wp_m(z,\tau))=\frac{1}{z^m}+(-1)^m\sum_{k=1}^\infty \binom{2k-1}{m-1} \widetilde{G}_{2k}(q) z^{2k-m}\in R(\!(z)\!). 
		\]
		Consider the extension of coefficients $W\otimes R$. We extend the action of the chiral Lie algebra $ \L_{E_\tau\bs \mpt{p}}(V)$ on $W$ to $W\otimes R$ by letting $	(a\otimes 1).v=a[0]v$ and  
		\begin{equation}\label{eq:formalspanning}
			\begin{aligned}
				(a\otimes \wp_m(z,\tau)).v&=\Res_z Y_W[a,z]\iota_q(\wp_m(z,\tau))v\\
				&=a[-m]v+(-1)^m\sum_{k=1}^\infty \binom{2k-1}{m-1}\widetilde{G}_{2k}(q) a[2k-m]v,\quad m\geq 2,
			\end{aligned}
		\end{equation}
		for all $a\in V$ and $v\in W\otimes R$.
		
		\begin{definition}
			Let $\L_{E_\tau\bs \mpt{p}}(V). (W\otimes R)$ be the $R$-submodule of $W\otimes R$ generated by the elements \eqref{eq:formalspanning} and $a[0]v$, for $a\in V$, $v\in W$, and $m\geq 2$. Define the module of coinvariants over $R$: 
			\begin{equation}\label{eq:formalcoinv}
				[W\otimes R]_{(E_\tau, \mpt{p},z)}=W\otimes R/\L_{E_\tau\bs \mpt{p}}(V). (W\otimes R). 
			\end{equation}
		\end{definition}
		\begin{remark}
			Note that when $W=V$, the $R$-module $\L_{E_\tau\bs \mpt{p}}(V). (V\otimes R)$ contains $O_q(V)$ in \cite[Section 4.4]{Zhu96}. Since by definition, $O_q(V)$ is generated by $\{(a\otimes \wp_2(z,\tau)).v:a,v\in V\}$ \eqref{eq:formalspanning} as an $\C[\widetilde{G}_{4}(q),\widetilde{G}_{6}(q)]$-module. 
		\end{remark}
		Similar to $C_2(V)$ in Definition~\ref{def:propertiesofVOAs}, for the irreducible $V$-module $W$, one can define 
		\[
		C_2(W)=\spn\{a_{(-2)}v: a\in V, v\in W\}. 
		\] 
		$W$ is called $C_2$-cofinite if $\dim W/C_2(W)<\infty.$
		It is well-known that if $V$ is $C_2$-cofinite, then $W$ is also $C_2$-cofinite \cite{GN03, ABD04}. We similarly define $$C_2[W]=\spn\{a[-2]v: a\in V, v\in W\},$$ where $a[-2]v=\Res_z z^{-2} Y_W[a,z]v$. We say that $W$ is {\bf $[C_2]$-cofinite} if  $\dim W/C_2[W]<\infty$. 
		
		\begin{lemma}\label{lm:Uproperty}
			Let $U\subset W$ be a subspace such that $W=U+C_2(W)$ and $W(0)+W(1)\ssq U$. Then $W=U+ C_2[W]$ and $W[0]+W[1]\ssq U$.
			In particular, if $W$ is $C_2$-cofinite, then $W$ is $[C_2]$-cofinite. 
		\end{lemma}
		\begin{proof}
			It follows from Lemma~\ref{lm:L[0]actiononW} that $W[0]+W[1]\ssq W(0)+W(1)\ssq U$. 
			Since the Laurent series expansions  of $(\ln(1+z))^m$ in  \eqref{eq:defa[m]} only involves non-negative powers of $z$, we have 
			$$a[-2]v=a_{(-2)}v+\sum_{i\geq -1} \la_i a_{(i)}v,\quad a\in V,\ v\in W.$$
			To show $C_2(W)\ssq U+C_2[W]$, we use induction on the degree of the elements $a_{(-2)}v\in C_2(W)$. 
			Observe that $\deg(a_{(-2)}v)=\wt a+1+\deg (v)\geq 2$ since $\vac_{(-2)}v=0$. If $\deg (a_{(-2)}v)=2$, then $a_{(-2)}v=a[-2]v-\sum_{i\geq -1} \la_i a_{(i)}v$, with $\deg(a_{(i)}v)=\wt a-i-1+\deg v\leq 1$ for all $i\geq -1$.  Then we have $\sum_{i\geq -1} \la_i a_{(i)}v\in W(0)+W(1)\ssq U$. Hence $a_{(-2)}v\in C_2[W]+U$.  
			
			Now assume $\deg (a_{(-2)}v)\geq 2$. Since $\deg(a_{(i)}v)<\deg (a_{(-2)}v)$ for all $i\geq -1$,  by the induction hypothesis, we have $ a_{(i)}v\in U+C_2[W]$ for all $i\geq -1$. Hence $a_{(-2)}v=a[-2]v-\sum_{i\geq -1} \la_i a_{(i)}v\in  U+C_2[W]$.
		\end{proof}

		\begin{lemma}\label{lm:L[-2]}
			Let $U\subset W$ be a homogeneous subspace such that $W=U\op C_2(W)$. Then the module of one-point coinvariants over $R$ can be reduced to the $R$-submodule $[U\otimes  R]_{(E_\tau,\mpt{p},z)}$: 
			\begin{equation}\label{eq:reductionofcoinv}
				[W\otimes R]_{(E_\tau, \mpt{p},z)}=\frac{U\otimes R+ \L_{E_\tau\bs \mpt{p}}(V). (W\otimes R)}{L_{E_\tau\bs \mpt{p}}(V). (W\otimes R)}=[U\otimes  R]_{(E_\tau,\mpt{p},z)}.
			\end{equation}
		\end{lemma}
		\begin{proof}
			Note that a spanning element $a_{-2}v$ of $C_2(W)$ satisfies $\deg(a_{(-2)}v)=\wt a+1+\deg v\geq 2$. Hence $C_2(W)\cap (W(0)\op W(1))=0$. Since $U\ssq W$ is homogeneous, we must have $W(0)\op W(1)\ssq U$. Then  by Lemma~\ref{lm:Uproperty}, $W[0]+W[1]\ssq U$ and $W=U+C_2[W]$.

			Note that $W=\bigoplus_{n=0}^\infty W[n]$. We use induction on $n$ to show that the equivalent class of $W[n]$ in the coinvariants  $[W\otimes R]_{(E_\tau, \mpt{p},z)}$ is contained in $ [U\otimes  R]_{(E_\tau,\mpt{p},z)}$. 			
			The base case $n=0,1$ are clear. Since $W=U+C_2[W]$ and $C_2[W]$ is $L[0]$-homogeneous, we only need to show the equivalent class of $a[-2]v$  in the coinvariants $	[W\otimes R]_{(E_\tau, \mpt{p},z)}$, with $L[0]$-degree $[\deg a[-2]v]\geq 2$, is contained in $ [U\otimes  R]_{(E_\tau,\mpt{p},z)}$. 
			Let $m=2$ in \eqref{eq:formalspanning}, we see that 
			\begin{equation}\label{eq:a[-2]}
				\begin{aligned}
					&	\big(a[-2]v+\sum_{k\geq 1} (2k-1) \widetilde{G}_{2k}(q) a[2k-2]v\big) + \L_{E_\tau\bs \mpt{p}}(V). (W\otimes R)\\
					&= (a\otimes  \wp_2(z,\tau)).v+\L_{E_\tau\bs \mpt{p}}(V). (W\otimes R)\\
					&=0\quad \mathrm{in}\quad [W\otimes R]_{(E_\tau, \mpt{p},z)}.
				\end{aligned}
			\end{equation}
			For any $ k\geq 1,$ since the  $L[0]$-degree $$[\deg a[2k-2]v]=[\wt a]+1-2k+[\deg v]<[\deg a[-2]v],$$
			we have $\widetilde{G}_{2k}(q) a[2k-2]v+\L_{E_\tau\bs \mpt{p}}(V). (W\otimes R)\in [U\otimes R]_{(E_\tau, \mpt{p},z)}$ by the induction hypothesis. 
			It follows from \eqref{eq:a[-2]} that $a[-2]v+ \L_{E_\tau\bs \mpt{p}}(V). (W\otimes R)\in  [U\otimes R]_{(E_\tau, \mpt{p},z)}$.  This proves \eqref{eq:reductionofcoinv}. 
		\end{proof}
		
		The following Proposition is a generalization of \cite[Lemma 4.4.1, 4.4.2]{Zhu96}:
		\begin{proposition}\label{prop:C2coinv}
			If $W$ is $C_2$-cofinite, then $	[W\otimes R]_{(E_\tau, \mpt{p},z)}$ is a finitely generated $R$-module. In particular, for any $v\in W$, there exists $s\in \N$ and $g_i(q)\in R$ such that 
			\begin{equation}\label{eq:L[-2]s}
				L[-2]^sv+\sum_{i=0}^{s-1} g_i(q) \cdot L[-2]^i v\in \L_{E_\tau\bs \mpt{p}}(V). (W\otimes R). 
			\end{equation}
		\end{proposition}
		\begin{proof}
			Since $W$ is $C_2$-cofinite, we may choose a finite-dimensional homogeneous subspace $U\ssq W$ such that $W=U+C_2(W)$ and $W(0)+W(1)\ssq U$. Then by \eqref{eq:reductionofcoinv}, $[W\otimes R]_{(E_\tau,\mpt{p},z)}$ is generated by $U$ as an $R$-module. Hence it is a Noetherian $R$-module. Then there exists $s>0$ such that $\{v, L[-2]v,\ds, L[-2]^{s-1}v\}$ generates the submodule $\sum_{i\geq 0} R.(L[-2]^iv)$. 
		\end{proof}
		
		\begin{remark}
			The same argument in Lemmas~\ref{lm:Uproperty}, \ref{lm:L[-2]}, and Proposition~\ref{prop:C2coinv} also shows that if $W$ is $C_2$-cofinite, then the vector space of regular one-point coinvariants $[W]_{(E_\tau,\mpt{p},z)}$ is finite-dimensional over $\C$ \cite{DGT24}. 
		\end{remark}

		\section{Trace functions associated to intertwining operators}\label{sec:tracefunctions}
		Our goal in this section is to show that the trace functions associated to intertwining operators among irreducible ordinary $V$-modules give rise to elements in  $\mathscr{C}(E_\tau,\mpt{p},z,W)$. 
		
		\subsection{Formal trace functions associated to intertwining operators}
		Let $M^1,M^2,M^3$ be irreducible ordinary $V$-modules of conformal weights $h_1,h_2,h_3$, respectively. 
		Given an intertwining operator $I(\cdot,w)\in I\fusion$, write  $I(v,w)=\sum_{k\in \Z} v_{(k)} w^{-k-1-h}$, where $h=h_1+h_2-h_3$. 
		For a homogeneous $v\in M^1(n)=M^1_{h_1+n}$, with $n\in \N$, write $\deg v=n$. 	Then we have $\vo{v}{k}M^2(m)\ssq M^3(\deg v-k-1+m)$ for all $m\in\N$.
		In particular, if we denote
		\begin{equation}\label{eq:oI}
			o_I(v):=\Res_w I(v,w)w^{h+\deg v-1}=\vo{v}{\deg v-1},
		\end{equation}
		then $o_I(v)M^2(n)\ssq M^3(n)$ for all $n\in\N$, see \cite{FZ92,Li98,Liu23,GLZ25} for more details.

		Given an intertwining operator $I$ of type $\fusion[W][M][M]$, where $W$ and $M$ are irreducible ordinary $V$-modules of conformal weights $h_W$ and $h_M$, respectively. Since each grading subspace $M(n)$ is finite-dimensional,  we have a well-defined linear map to the space of formal power series in $q$: 
		\[
		\tr|_Mo_I(\cdot )q^{L(0)-\frac{c}{24}}: W\ra \C \dbrack{q}q^{h_M-\frac{c}{24}},\quad 	\tr|_Mo_I(v)q^{L(0)-\frac{c}{24}}=\sum_{n=0}^\infty \tr|_{M(n)}o_I(v)q^{n+h_M-\frac{c}{24}}. 
		\]
		Note that $I(v,w)=\sum_{n\in \Z} v_{(k)} w^{-k-1-h_W}$. Let 
		\begin{equation}\label{onepointF}
			\begin{aligned}
				F((v,w),q):&=w^{\deg v+h_W} \trM I(v,w)q^{L(0)-\frac{c}{24}},\\
				F((a,z)(v,w),q):&=z^{\wt a} w^{\deg v+h_W} \trM Y(a,z)I(v,w) q^{L(0)-\frac{c}{24}},
			\end{aligned}
		\end{equation}
		which are formal power series in $q$ with coefficients in $\C[z^{\pm 1}, w^{\pm 1}, (z-w)^{\pm1}]$. 
		It follows from the bracket relation $[L(0), Y(a,z)]=(z\frac{d}{dz}+\wt a)Y(a,z)$ that
		\begin{equation} \label{L0comm}
			\begin{aligned}
				&Y(a,z)q^{L(0)}=q^{-\wt a} q^{L(0)} Y(a,zq^{-1}),\\
				& I(v,w) q^{L(0)}=q^{-(\deg v+h_W)} q^{L(0)} I(v,wq^{-1}). 
			\end{aligned}
		\end{equation}
		By taking residue in \eqref{L0comm}, we have
		\begin{equation}\label{L0commcomponent}
			a_{(\wt a-1+k)}q^{L(0)-\frac{c}{24}}=q^{L(0)-\frac{c}{24}}a_{(\wt a-1+k)}q^k,\quad a\in V,\ k\in \Z. 
		\end{equation}
		
		The following Lemma is useful in the calculation of recursive formulas. 
		\begin{lemma}\label{lemma:sumformulaforP}
			For any $a\in V$ and $v\in W$, we have
			\begin{align*}
				&\sum_{i\geq 0}\sum_{k= 1}^\infty  \left(\binom{\wt a-1+k}{i} \frac{x^k}{1-q^k} + \binom{\wt a-1-k}{i} \frac{x^{-k}}{1-q^{-k}} \right) a_{(i)}v\\
				&=\sum_{m=0}^\infty P_{m+1}(x,q) a[m]v. 
			\end{align*}
		\end{lemma}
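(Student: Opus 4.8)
The plan is to prove the identity by a direct formal computation starting from the right-hand side, the only nontrivial input being the coordinate-change expansion of the square-bracket modes. First I would substitute the coordinate-change formula
\[
a[m]=(2\pi i)^{-m-1}\Res_z\left(Y(a,z)(\ln(1+z))^m(1+z)^{\wt a-1}\right)
\]
into $\sum_{m\ge 0}P_{m+1}(x,q)a[m]v$. Writing $Y(a,z)=\sum_{i}a(i)z^{-i-1}$ and taking the residue expresses $a[m]$ as $(2\pi i)^{-m-1}\sum_{i\ge 0}a(i)\,c_{i,m}$, where $c_{i,m}$ is the coefficient of $z^i$ in $(\ln(1+z))^m(1+z)^{\wt a-1}$. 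Plugging in the series definition of $P_{m+1}$, whose prefactor is $(2\pi i)^{m+1}/m!$, the powers of $2\pi i$ cancel exactly, and one is left with a triple sum over $m$, $k$, and $i$ in which no transcendental constants survive.

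The heart of the argument is then to interchange the order of summation and carry out the $m$-sum first, for each fixed $k$ and $i$. After factoring $(1+z)^{\wt a-1}$ out of $c_{i,m}$, the relevant $m$-series is $\sum_{m\ge 0}\frac{1}{m!}(\pm k)^m(\ln(1+z))^m$, which by the exponential series equals $e^{\pm k\ln(1+z)}=(1+z)^{\pm k}$. This collapses the $m$-summation and turns the two summands of $P_{m+1}$ into $(1+z)^{\wt a-1+k}$ and (because of the factor $(-1)^{m+1}$) $-(1+z)^{\wt a-1-k}$ respectively; reading off the coefficient of $z^i$ then produces precisely the binomial coefficients $\binom{\wt a-1+k}{i}$ and $\binom{\wt a-1-k}{i}$ of the left-hand side. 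Reconciling the accompanying $q$-factors, using $\frac{q^{-k}}{1-q^{-k}}=-\frac{1}{1-q^k}$ on the second term, yields the claimed equality term by term in $k$ and $i$.

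The main obstacle I anticipate is bookkeeping rather than conceptual: one must justify the interchange of the (formal, and after specialization analytic) summations, and carefully track the placement of the factors $q^k$ and the signs through both summands so that the first and second terms on the left acquire the correct $q$-weights and powers of $w/x$. This is exactly where the analytic domain $|q|<|w/x|<1$ and the interchangeability of $P_{m+1}(w/x,q)$ with $P_{m+1}(wq/x,q)$ recorded in \cref{coro:Pfunctioncov} become relevant; in particular the $m=0$ term demands extra care, since there the two expansions differ by $2\pi i$ while for $m\ge 1$ they agree. Once these $q$-powers are matched, the identity follows from the generating-function collapse of the $m$-sum described above.
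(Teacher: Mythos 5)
The paper offers no proof of this lemma at all---it is quoted as a black box, being the computation that is buried inside the proofs of Propositions 4.3.1--4.3.3 of Zhu's paper---so your argument is not an alternative route but the missing verification, and it is correct. The key steps all check: the $(2\pi i)^{m+1}$ in $P_{m+1}$ cancels the $(2\pi i)^{-m-1}$ in the coordinate change; for each fixed $i$ only the finitely many $m\le i$ contribute, since $(\ln(1+y))^m$ begins at $y^m$, so interchanging the $m$- and $k$-sums is a finite regrouping needing no analytic input (your worry here is overcautious); and the exponential collapse $\sum_{m}\tfrac{(\pm k)^m}{m!}(\ln(1+y))^m(1+y)^{\wt a-1}=(1+y)^{\wt a-1\pm k}$ yields exactly $\binom{\wt a-1\pm k}{i}$, with the factor $(-1)^{m+1}k^m=-(-k)^m$ supplying the sign that turns $\tfrac{-1}{1-q^k}$ into $\tfrac{q^{-k}}{1-q^{-k}}$. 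One thing you have implicitly detected but should state outright: as printed, the right-hand side $P_{m+1}(x,q)$ is inconsistent with the $q$-weights on the left. What your computation actually establishes is the identity
\begin{equation*}
\sum_{i\ge0}\sum_{k\ge1}\left(\binom{\wt a-1+k}{i}\frac{z^k}{1-q^k}-\binom{\wt a-1-k}{i}\frac{q^k z^{-k}}{1-q^{k}}\right)a(i)v=\sum_{m\ge0}P_{m+1}\left(z,q\right)a[m]v,
\end{equation*}
whose specialization at $z=wq/x$ gives the lemma's left-hand side (hence $P_{m+1}(wq/x,q)$ on the right, as used in the proof of the first recursion), and whose specialization at $z=w/x$ gives the variant invoked in the second recursion. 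Your closing appeal to Corollary~\ref{coro:Pfunctioncov} and the $2\pi i$ discrepancy at $m=0$ is not needed for the lemma itself---it only enters later, when the two recursions are compared---so it can be dropped from the proof proper.
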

		\begin{proof}
			It follows from \eqref{eq:defPm+1} and \eqref{eq:defa[m]} that 
			\begin{align*}
				&\sum_{m=0}^\infty P_{m+1}(x,q) a[m]v\\
				&=\Res_{z}\sum_{m=0}^\infty 	\sum_{k=1}^\infty \left( \frac{(k\ln(1+z))^m}{m!} \cdot \frac{x^k}{1-q^k} -\frac{(-k\ln(1+z))^m}{m!}\cdot \frac{x^{-k}q^k}{1-q^k}\right)\cdot (1+z)^{\wt a-1}Y_W(a,z)v\\
				&=\Res_z \sum_{k=1}^\infty \left(\exp(\ln(1+z)^k)\cdot \frac{x^k}{1-q^k} +\exp(\ln(1+z)^{-k})\cdot \frac{x^{-k}}{1-q^{-k}} \right)\cdot (1+z)^{\wt a-1}Y_W(a,z)v\\
				&=\Res_z \sum_{k=1}^\infty \left((1+z)^{\wt a-1+k}\cdot \frac{x^k}{1-q^k} +(1+z)^{\wt a-1-k}\cdot \frac{x^{-k}}{1-q^{-k}} \right)\cdot Y_W(a,z)v\\
				&=\sum_{i\geq 0}\sum_{k= 1}^\infty  \left(\binom{\wt a-1+k}{i} \frac{x^k}{1-q^k} + \binom{\wt a-1-k}{i} \frac{x^{-k}}{1-q^{-k}} \right) a_{(i)}v,
			\end{align*}
			where we used the fact that $P_{m+1}(x,q)$ converges locally uniformly to swap the order of the infinite sums.
		\end{proof}

		
		We first prove two recursive formulas about the trace functions defined by intertwining operators \eqref{onepointF}, which generalizes \cite[Propositions 4.3.2, 4.3.3]{Zhu96}. 
		
		\begin{proposition}
			For any $a\in V$ and $v\in W$, we have 
			\begin{equation}\label{recursiveright}
				\begin{aligned}
					F((v,w)(a,x),q)&=w^{\deg v+h_W}\trM a_{(\wt a-1)}I(v,w) q^{L(0)-\frac{c}{24}}-2\pi i F((a[0]v,w),q)\\
					&\ \ +\sum_{m=0}^\infty P_{m+1}\left(\frac{w}{x}q,q\right) F((a[m]v,w),q)
				\end{aligned}
			\end{equation}
		\end{proposition}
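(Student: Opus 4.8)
The plan is to follow Zhu's derivation of his Propositions~4.3.2 and 4.3.3, replacing the module vertex operator by the intertwining operator $I$. By the convention of \eqref{onepointF}, $F((v,w)(a,x),q)=x^{\wt a}w^{\deg v+h_W}\trM I(v,w)Y(a,x)q^{L(0)-h_M}$, so expanding $Y(a,x)=\sum_{n\in\Z}a(n)x^{-n-1}$ reduces the problem to computing each coefficient $T_n:=\trM I(v,w)a(n)q^{L(0)-h_M}$. I would use three ingredients: the commutator formula $[a(n),I(v,w)]=\sum_{j\ge 0}\binom{n}{j}w^{n-j}I(a(j)v,w)$, obtained by taking residues in the Jacobi identity for $I$ (\cref{def:IO}); the conjugation relation \eqref{L0commcomponent}, in the form $q^{L(0)}a(n)=a(n)q^{L(0)}q^{\wt a-n-1}$; and the cyclicity of the trace.

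First I would rewrite $T_n$ in two ways. Moving $a(n)$ to the left of $I(v,w)$ by the commutator formula gives $T_n=\trM a(n)I(v,w)q^{L(0)-h_M}-\sum_{j\ge 0}\binom{n}{j}w^{n-j}\trM I(a(j)v,w)q^{L(0)-h_M}$, while cyclicity together with \eqref{L0commcomponent} gives $\trM a(n)I(v,w)q^{L(0)-h_M}=q^{\wt a-n-1}T_n$. Combining the two yields the scalar relation $(1-q^{\wt a-n-1})T_n=-\sum_{j\ge 0}\binom{n}{j}w^{n-j}\trM I(a(j)v,w)q^{L(0)-h_M}$, which for $n\ne\wt a-1$ solves to
\[
T_n=\frac{1}{q^{\wt a-n-1}-1}\sum_{j\ge 0}\binom{n}{j}w^{n-j}\,\trM I(a(j)v,w)q^{L(0)-h_M}.
\]
For the exceptional index $n=\wt a-1$ the prefactor vanishes and $T_{\wt a-1}$ is left untouched; its contribution $x^{-\wt a}T_{\wt a-1}$ becomes, after restoring $x^{\wt a}w^{\deg v+h_W}$ and using that $a(\wt a-1)$ commutes with $q^{L(0)}$, exactly the leading term $w^{\deg v+h_W}\trM a(\wt a-1)I(v,w)q^{L(0)-h_M}$ of \eqref{recursiveright}.

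It then remains to sum $\sum_{n\ne\wt a-1}x^{-n-1}T_n$ and reorganize. Writing $n=\wt a-1+k$ and splitting into $k>0$ and $k<0$ gives rise to the factors $\frac{q^k}{1-q^k}$ and $\frac{q^{-k}}{1-q^{-k}}$ appearing on the left-hand side of \cref{lemma:sumformulaforP}, and the two choices are also exactly what render $T_n$ a well-defined $q$-series. Using the linearity of $v'\mapsto\trM I(v',w)q^{L(0)-h_M}$ to pull the $j$-sum inside $I$, \cref{lemma:sumformulaforP} converts the round-bracket modes $a(j)v$ into the square-bracket modes $a[m]v$ and replaces the double sum by a single sum over $m\ge 0$ of $P$-functions; restoring the prefactor $x^{\wt a}w^{\deg v+h_W}$ and rewriting through \eqref{onepointF} turns the $m\ge 1$ part into $\sum_{m\ge 1}P_{m+1}(\tfrac{w}{x}q,q)F((a[m]v,w),q)$, with the $m=0$ term requiring the extra care discussed below.

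The step I expect to be most delicate is the bookkeeping in this last reorganization, and in particular the $m=0$ term. Tracking the powers of $w$, $x$, and $q$ against the conformal-weight prefactors $w^{\deg(a(j)v)+h_W}$ (which depend on $j$) so that everything repackages cleanly into $F$-notation requires care, and the sum must be resummed in the annular region where the relevant series converge (see \cref{coro:Pfunctioncov}). The genuinely anomalous piece is $P_1$: by \cref{coro:Pfunctioncov} the two natural expansions of $P_1$ differ by the additive constant $2\pi i$, so the natural $m=0$ coefficient is $P_1(\tfrac{w}{x}q,q)-2\pi i$; displaying it as the $m=0$ term of $\sum_{m\ge 0}P_{m+1}(\tfrac{w}{x}q,q)F((a[m]v,w),q)$ then leaves over precisely the correction $-2\pi i F((a[0]v,w),q)$, the remaining term of \eqref{recursiveright}. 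The algebraic heart of the proof is the one-line solution for $T_n$ above; that is where the passage from Zhu's vertex operators to intertwining operators actually takes place, and the rest is the elliptic-function bookkeeping just described.
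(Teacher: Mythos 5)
Your overall route is the same as the paper's: expand $Y(a,x)$ into modes, use the commutator $[a(n),I(v,w)]=\sum_{j\ge0}\binom{n}{j}w^{n-j}I(a(j)v,w)$ together with \eqref{L0commcomponent} and the cyclicity of the graded trace to solve for $T_n=\trM I(v,w)a(n)q^{L(0)-h_M}$ when $n\ne\wt a-1$, and then resum the nonzero modes via Lemma~\ref{lemma:sumformulaforP}. That part of your argument, including the coefficients $\frac{q^k}{1-q^k}$ and $\frac{q^{-k}}{1-q^{-k}}$, reproduces the paper's computation exactly.

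The gap is in your account of where $-2\pi i\,F((a[0]v,w),q)$ comes from. The resummation of the $n\ne\wt a-1$ modes through Lemma~\ref{lemma:sumformulaforP} is an exact identity of formal $q$-series whose $m=0$ coefficient is $P_1\left(\frac{w}{x}q,q\right)$, full stop; there is no ``natural'' coefficient $P_1\left(\frac{w}{x}q,q\right)-2\pi i$ there and hence no leftover $-2\pi i\,F((a[0]v,w),q)$ to be extracted from that step. Corollary~\ref{coro:Pfunctioncov} concerns the analytic continuation of the two series expansions of $P_1$ and only enters later, in the derivation of \eqref{Fcomm}. In the paper the term $-2\pi i\,F((a[0]v,w),q)$ is produced by the exceptional mode itself: one applies the commutator to $\trM I(v,w)a(\wt a-1)q^{L(0)-h_M}$, and since $a[0]=(2\pi i)^{-1}\sum_{j\ge0}\binom{\wt a-1}{j}a(j)$ this contributes exactly $-2\pi i\,F((a[0]v,w),q)$ alongside the leading term. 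You instead dispose of that mode by cyclicity (legitimate, since $a(\wt a-1)$ preserves each $M(n)$), which yields \eqref{recursiveright} \emph{without} the $-2\pi i$ term. The two answers agree only because $F((a[0]v,w),q)=0$ --- indeed your own scalar relation specialized to $n=\wt a-1$ reads $0=-2\pi i\,w^{-\deg v-h_W}F((a[0]v,w),q)$, which proves this vanishing directly, whereas the paper only establishes it afterwards from \eqref{Fcomm}. As written, your proof both asserts the $-2\pi i$ term and misattributes its origin. To repair it, either keep the exceptional mode as $\trM I(v,w)a(\wt a-1)q^{L(0)-h_M}$ and apply the commutator as the paper does, or state explicitly that your cyclicity argument forces $F((a[0]v,w),q)=0$ so that the term may be inserted harmlessly.
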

		\begin{proof}
			By \eqref{eq:defa[m]}, $a[0]=(2\pi i)^{-1}\Res_z \left(Y_W(a,z)(1+z)^{\wt a-1}\right) =(2\pi i)^{-1} \sum_{j\geq 0} \binom{\wt a-1}{j} a_{(j)}$. Furthermore, for any $k\geq 0$, we have 
			$$I(v,w)a_{(\wt a-1+k)}=a_{(\wt a-1+k)}I(v,w)-\sum_{j\geq 0}\binom{\wt a-1+k}{j} I(a_{(j)}v,w)w^{\wt a-1+k-j}. $$
			Then by  \eqref{onepointF}, we have 
			\[
			w^{\deg v+h_W}\trM I(v,w)a_{(\wt a-1)}q^{L(0)-\frac{c}{24}}=w^{\deg v+h_W}\trM a_{(\wt a-1)}I(v,w) q^{L(0)-\frac{c}{24}}-2\pi i F((a[0]v,w),q).
			\]
			It follows that 
			\begin{align*}
				F((v,w)(a,x),q)&=x^{\wt a}w^{\deg v+h_W} \trM I(v,w)\sum_{k\in \Z} a_{(\wt a-1+k)}x^{-\wt a-k} q^{L(0)-\frac{c}{24}}\\
				&=w^{\deg v+h_W}\trM I(v,w)a_{(\wt a-1)}q^{L(0)-\frac{c}{24}}\\
				&\ +\sum_{k\neq 0} x^{-k} w^{\deg v+h_W} \trM I(v,w) a_{(\wt a-1+k) } q^{L(0)-\frac{c}{24}}\\
				&= w^{\deg v+h_W}\trM a_{(\wt a-1)}I(v,w) q^{L(0)-\frac{c}{24}}-2\pi i F((a[0]v,w),q)\\
				&\ + \sum_{k\neq 0} x^{-k} w^{\deg v+h_W} \trM I(v,w) a_{(\wt a-1+k) } q^{L(0)-\frac{c}{24}}.
			\end{align*}
			For $k\neq 0$, it follows from \eqref{L0commcomponent} and the fact $\tr (AB)=\tr(BA)$ that 
			\begin{align*}
				\trM I(v,w) a_{(\wt a-1+k)} q^{L(0)-\frac{c}{24}}&=\trM (I(v,w) q^{L(0)-\frac{c}{24}}) a_{(\wt a-1+k)} q^k\\
				&=\trM  a_{(\wt a-1+k) } (I(v,w) q^{L(0)-\frac{c}{24}})q^k\\
				&=\trM   I(v,w)a_{(\wt a-1+k)} q^{L(0)-\frac{c}{24}}q^k\\
				&\ \ +\sum_{i\geq 0}\binom{\wt a-1+k}{i} \trM I(a_{(i)}v,w)w^{\wt a-1-i+k} q^{L(0)-\frac{c}{24}}q^k.
			\end{align*}
			After moving the first term on the right hand side of the equation above to the left, we see that 
			\begin{align*}
				&\sum_{k\neq 0}x^{-k} w^{\deg v+h_W}  \trM I(v,w) a_{(\wt a-1+k)} q^{L(0)-\frac{c}{24}}\\
				&=\sum_{k\neq 0}x^{-k} w^{\deg v+h_W} \sum_{i\geq 0}\binom{\wt a-1+k}{i} \frac{q^k}{1-q^k} \trM I(a_{(i)}v,w)w^{\wt a-1+k-i} q^{L(0)-\frac{c}{24}}\\
				&=\sum_{i\geq 0} \left(\sum_{k\geq 1} \binom{\wt a-1+k}{i} \frac{q^k}{1-q^k} \left(\frac{w}{x}\right)^k+\sum_{k\geq 1} \binom{\wt a-1-k}{i} \frac{q^{-k}}{1-q^{-k}} \left(\frac{w}{x}\right)^{-k} \right) F((a_{(i)}v,w),q)\\
				&=\sum_{m=0}^\infty P_{m+1}\left(\frac{w}{x}q,q\right) F((a[m]v,w),q),
			\end{align*}
			where the last equality follows from Lemma \ref{lemma:sumformulaforP}, with $x$ replaced by $wq/x$. This proves \eqref{recursiveright}. 
		\end{proof}
		
		\begin{proposition}
			For any $a\in V$ and $v\in W$, we have 
			\begin{equation}\label{recursiveleft}
				\begin{aligned}
					F((a,x)(v,w),q)&=w^{\deg v+h_W} \trM a_{(\wt a-1)}I(v,w)q^{L(0)-\frac{c}{24}}\\
					&\ \ +\sum_{m=0}^\infty P_{m+1}\left(\frac{w}{x},q\right) F((a[m]v,w),q). 
				\end{aligned}
			\end{equation}
		\end{proposition}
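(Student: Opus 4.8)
\textit{The plan} is to follow the proof of the preceding proposition almost verbatim, the only structural change being that $Y(a,x)$ now stands to the \emph{left} of $I(v,w)$ inside the trace. First I would insert $Y(a,x)=\sum_{k\in\Z}a(\wt a-1+k)\,x^{-\wt a-k}$ into the definition \eqref{onepointF} and factor out the powers of $x$, obtaining
\begin{equation*}
	F((a,x)(v,w),q)=w^{\deg v+h_W}\sum_{k\in\Z}x^{-k}\,\trM a(\wt a-1+k)I(v,w)\,q^{L(0)-h_M}.
\end{equation*}
The crucial simplification compared with \eqref{recursiveright} is that the $k=0$ summand is already $w^{\deg v+h_W}\trM a(\wt a-1)I(v,w)q^{L(0)-h_M}$, the first term of \eqref{recursiveleft}: since $a(\wt a-1)$ sits to the left of $I(v,w)$ from the start, no commutator is needed, and hence no $-2\pi i\,F((a[0]v,w),q)$ correction is produced.

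For $k\neq 0$ I would determine $S_k:=\trM a(\wt a-1+k)I(v,w)q^{L(0)-h_M}$ by the same three moves as before: cyclicity of the trace, the relation \eqref{L0commcomponent} (which carries $a(\wt a-1+k)$ past $q^{L(0)}$ at the price of $q^{-k}$), and the intertwiner commutator $a(\wt a-1+k)I(v,w)=I(v,w)a(\wt a-1+k)+\sum_{i\ge 0}\binom{\wt a-1+k}{i}I(a(i)v,w)\,w^{\wt a-1+k-i}$. These combine into the linear equation $S_k=q^{-k}\bigl(S_k-R_k\bigr)$, where $R_k:=\sum_{i\ge 0}\binom{\wt a-1+k}{i}w^{\wt a-1+k-i}\trM I(a(i)v,w)q^{L(0)-h_M}$, whose solution is $S_k=\frac{1}{1-q^k}R_k$. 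Note the coefficient $\frac{1}{1-q^k}$ here, in contrast to the $\frac{q^k}{1-q^k}$ of the previous proposition; the two differ by exactly $1$, which is the algebraic shadow of the missing $-2\pi i$ term.

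It then remains to sum $\sum_{k\neq0}x^{-k}w^{\deg v+h_W}S_k$. Splitting into $k\ge 1$ and $k\le-1$ and expanding the resulting $\frac{1}{1-q^k}$ in nonnegative powers of $q$ in each range, I would run the computation of Lemma~\ref{lemma:sumformulaforP} verbatim, now carrying the coefficient $\frac{1}{1-q^k}$ in place of $\frac{q^k}{1-q^k}$. Because this replaces $\frac{q^k}{1-q^k}(\tfrac{w}{x})^k$ by $\frac{1}{1-q^k}(\tfrac{w}{x})^k$, the variable fed into the Weierstrass series becomes $\tfrac{w}{x}$ rather than $\tfrac{w}{x}q$, and the $k\neq0$ terms collect into $\sum_{m\ge0}P_{m+1}\!\left(\frac{w}{x},q\right)F((a[m]v,w),q)$, yielding \eqref{recursiveleft}.

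The one delicate point is this last step: matching the two geometric pieces (from $k\ge1$ and $k\le-1$) against the two summands in the definition of $P_{m+1}$, including the parity sign $(-1)^{m+1}$ and the passage from the modes $a(i)$ to the square-bracket modes $a[m]$. This is handled exactly as in the proof of Lemma~\ref{lemma:sumformulaforP}. As a cross-check, the passage from \eqref{recursiveright} to \eqref{recursiveleft} is compatible with the analytic-continuation identities of Corollary~\ref{coro:Pfunctioncov}: since $P_1(\tfrac{w}{x},q)=P_1(\tfrac{w}{x}q,q)-2\pi i$ while $P_{m+1}(\tfrac{w}{x},q)=P_{m+1}(\tfrac{w}{x}q,q)$ for $m\ge1$, the disappearance of the $-2\pi i\,F((a[0]v,w),q)$ term and the shift of the $P$-argument are two manifestations of the same identity.
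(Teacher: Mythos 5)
Your proposal is correct and follows the paper's own proof essentially verbatim: expand $Y(a,x)$ into modes, observe that the $k=0$ term already produces $w^{\deg v+h_W}\trM a(\wt a-1)I(v,w)q^{L(0)-h_M}$ with no commutator correction, solve for the $k\neq 0$ traces via cyclicity and \eqref{L0commcomponent} to get the coefficient $\frac{1}{1-q^k}$, and reassemble into $P_{m+1}\left(\frac{w}{x},q\right)$ by the computation of Lemma~\ref{lemma:sumformulaforP}. Your closing cross-check against Corollary~\ref{coro:Pfunctioncov} is a nice consistency observation not present in the paper, but the argument itself is the same.
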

		\begin{proof}
			Similar to the proof of \eqref{recursiveright}, by \eqref{L0commcomponent} we have 
			\begin{align*}
				&F((a,x)(v,w),q)\\
				&= x^{\wt a} w^{\deg v+h_W}  \trM \sum_{k\in \Z} a_{(\wt a-1+k)} I(v,w)x^{-\wt a-k} q^{L(0)-\frac{c}{24}}\\
				&=w^{\deg v+h_W} \trM a_{(\wt a-1)}I(v,w) q^{L(0)-\frac{c}{24}}\\
				&\ \ + \sum_{k\neq 0} x^{-k} w^{\deg v+h_W} \trM a_{(\wt a-1+k)} I(v,w) q^{L(0)-\frac{c}{24}}\\
				&=w^{\deg v+h_W} \trM a_{(\wt a-1)}I(v,w) q^{L(0)-\frac{c}{24}}\\
				&\ \ + \sum_{k\geq 1} \sum_{i\geq 0} \left(\binom{\wt a-1+k}{i} \frac{1}{1-q^k}\left(\frac{w}{x}\right)^k+\binom{\wt a-1-k}{i} \frac{1}{1-q^{-k}}\left(\frac{w}{x}\right)^{-k} \right) F((a_{(i)}v,w),q)\\
				&=w^{\deg v+h_W} \trM a_{(\wt a-1)}I(v,w) q^{L(0)-\frac{c}{24}}+\sum_{m=0}^\infty P_{m+1}\left(\frac{w}{x},q\right) F((a[m]v,w),q), 
			\end{align*}
			where the last equality follows from Lemma \ref{lemma:sumformulaforP}, with $x$ replaced by $w/x$. 
		\end{proof}
		By Corollary~\ref{coro:Pfunctioncov}, \eqref{recursiveleft}, and \eqref{recursiveright}, we see that as formal power series in $q$,
		\begin{equation}\label{Fcomm}
			F((a,x)(v,w),q)=F((v,w)(a,x),q).
		\end{equation}
		
		\subsection{Recursive formulas for the formal trace functions}

		We have the following facts about the function $P_{m+1}$, see \cite[(4.3.8)--(4.3.11)]{Zhu96} for the details of the proof. 
		\begin{lemma}\label{lemma:intermediateformula}
			Write $(1+z)^{\wt a-1} \left(\ln (1+z)\right)^{-1}=\sum_{i\geq -1} c_i z^i$. Then for any $m\geq 1$, 
			\begin{align*}
				&\sum_{i\geq -1} c_i \Res_x \left(\iota_{x,w}((x-w)^i) w^{\wt a-i-1} x^{-\wt a}-\iota_{w,x}((x-w)^i) w^{\wt a-i-1}x^{-\wt a} \right)=1,\\
				& \sum_{i\geq -1} c_i\Res_x\left(\iota_{x,w}((x-w)^i) w^{\wt a-i-1} x^{-\wt a}P_{1}\left(\frac{w}{x},q\right)-\iota_{w,x}((x-w)^i) w^{\wt a-i-1}x^{-\wt a}\left(P_{1}\left(\frac{wq}{x},q\right)-2\pi i\right)\right)\\
				&= -\pi i,\\
				&  \sum_{i\geq -1} c_i\Res_x\left(\iota_{x,w}((x-w)^i) w^{\wt a-i-1} x^{-\wt a}P_{m+1}\left(\frac{w}{x},q\right)-\iota_{w,x}((x-w)^i) w^{\wt a-i-1}x^{-\wt a}P_{m+1}\left(\frac{wq}{x},q\right)\right)\\
				&= \widetilde{G}_{m+1}(q).
			\end{align*}
		\end{lemma}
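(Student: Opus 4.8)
The plan is to derive all three identities from a single residue computation, with the weight-two quasi-periodicity of $P_1$ accounting for the anomalous constants in the first two.

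First I would interpret the formal residues as contour integrals. For a series $h$ convergent on an annulus $|x|>|w|$ one has $\Res_x(\iota_{x,w}h)=\frac{1}{2\pi i}\oint_{|x|=R_1}h\,dx$ with $R_1>|w|$, and likewise $\Res_x(\iota_{w,x}h')=\frac{1}{2\pi i}\oint_{|x|=R_2}h'\,dx$ with $R_2<|w|$. In the second and third identities $P_{m+1}(w/x,q)$ converges on $|w|<|x|<|w||q|^{-1}$ while $P_{m+1}(wq/x,q)$ converges on $|w||q|<|x|<|w|$, and by \Cref{coro:Pfunctioncov} both are restrictions of the single meromorphic function $\widetilde{P}_{m+1}(w/x,q)$; in the first identity the two expansions of $(x-w)^i$ are restrictions of the rational function $(x-w)^i$ itself. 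Either way the summand equals $\frac{1}{2\pi i}(\oint_{R_1}-\oint_{R_2})$ of one meromorphic integrand, which by the residue theorem on the annulus $R_2<|x|<R_1$ reduces to $\Res_{x=w}$, once one checks that the remaining lattice poles of $\widetilde{P}_{m+1}$ (at $x=wq^{\pm1}$) and the pole of $x^{-\wt a}$ at $x=0$ lie outside that annulus.

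Next I would carry out the sum over $i$ using the generating function defining the $c_i$. Setting $z=(x-w)/w$ in $(1+z)^{\wt a-1}(\ln(1+z))^{-1}=\sum_{i\ge-1}c_iz^i$ collapses the weighted sum to
\[
\sum_{i\ge-1}c_i\,w^{\wt a-i-1}x^{-\wt a}(x-w)^i=x^{-1}\bigl(\ln(x/w)\bigr)^{-1},
\]
so each identity's left-hand side becomes $\Res_{x=w}\bigl[x^{-1}(\ln(x/w))^{-1}\,\Phi\bigr]$ with $\Phi=1$, $\Phi=\widetilde{P}_{1}(w/x,q)$, and $\Phi=\widetilde{P}_{m+1}(w/x,q)$ respectively. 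Substituting $u=\ln(x/w)$ (so $dx/x=du$ and $w/x=e^{-u}$) turns this residue into the constant ($u^0$) coefficient of $\Phi(e^{-u},q)$. For $\Phi=1$ this is $1$; for $\Phi=\widetilde{P}_{m+1}$ with $m\ge2$ the dictionary $P_{k}(e^{2\pi i z},q)=(-1)^{k}\wp_{k}(z,\tau)$ ($k=m+1\ge3$) together with the expansion \eqref{expansionofwpm} extracts exactly $\widetilde{G}_{m+1}(q)$, which vanishes when $m$ is even. The second identity is the same computation for $\Phi=\widetilde{P}_1$, where the additive constant $-\pi i$ in the expansion of $P_1$ is what survives.

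The main obstacle is the weight-two anomaly. Because $P_1$ is only quasi-periodic, $P_1(w/x,q)$ and $P_1(wq/x,q)$ are \emph{not} expansions of a common function; the correct common extension pairs $P_1(w/x,q)$ with $P_1(wq/x,q)-2\pi i$, which is precisely the shift recorded in \Cref{coro:Pfunctioncov} and is responsible for the output $-\pi i$ in place of a clean Eisenstein term. The analogous weight-two bookkeeping enters the $m=1$ case of the third identity through the constant term of $P_2$ and must be tracked with the same care. Beyond this, the argument is purely function-theoretic: it involves only $P_{m+1}$, the coefficients $c_i$, and formal residues, so it reproduces Zhu's (4.3.8)--(4.3.11) verbatim, independently of the modules and trace functions appearing in \eqref{recursiveleft} and \eqref{recursiveright}.
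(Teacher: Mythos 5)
Your argument is correct, and it is essentially the standard one: the paper itself supplies no proof of this lemma, deferring to (4.3.8)--(4.3.11) of \cite{Zhu96}, and your contour-difference/residue-at-$x=w$ computation with the generating function $\sum_{i\ge -1}c_i\,w^{\wt a-i-1}x^{-\wt a}(x-w)^i=x^{-1}(\ln(x/w))^{-1}$ and the substitution $u=\ln(x/w)$ reproduces Zhu's proof of exactly those identities. In particular you correctly isolate the only delicate points, namely that Corollary~\ref{coro:Pfunctioncov} pairs $P_1(w/x,q)$ with $P_1(wq/x,q)-2\pi i$ (whence the output $-\pi i$ rather than an Eisenstein term) and that the weight-two constant must be tracked in the $m=1$ case.
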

		
		\begin{theorem}\label{thm:4,1}
			For all $a\in V$ and $v\in W$, we have the following identity of formal power series:
			\begin{equation}\label{a[-1]recursive}
				\begin{aligned}
					F((a[-1]v,w),q)&=w^{\deg v+h_W} \trM a_{(\wt a-1)}I(v,w) q^{L(0)-\frac{c}{24}}\\
					&\ \  +\sum_{k=1}^\infty\widetilde{G}_{2k}(q) F((a[2k-1]v,w),q).
				\end{aligned}
			\end{equation}
		\end{theorem}
		\begin{proof}
			Write $(1+z)^{\wt a-1} \left(\ln (1+z)\right)^{-1}=\sum_{i\geq -1} c_i z^i$. By the Jacobi identity, \eqref{recursiveleft},  \eqref{recursiveright}, and Lemma~\ref{lemma:intermediateformula}, we have 
			\begin{align*}
				&F((a[-1]v,w),q)\\
				&=\sum_{i\geq -1} c_i w^{\wt a-i-1+\deg v+h_W} \trM I(a_{(i)}v,w)q^{L(0)-\frac{c}{24}}\\
				&=\sum_{i\geq -1} c_i w^{\wt a-i-1+\deg v+h_W} \Res_{x-w} \iota_{w,x-w}((x-w)^i) \trM I((Y_W(a,x-w)v,w),q)q^{L(0)-\frac{c}{24}}\\
				&=\sum_{i\geq -1} c_i \Res_x \iota_{x,w}((x-w)^i) w^{\wt a-i-1} \trM Y_M(a,x)I(v,w)q^{L(0)-\frac{c}{24}}\\
				&\ -\sum_{i\geq -1} c_i \Res_x \iota_{w,x}((x-w)^i) w^{\wt a-i-1} \trM I(v,w)Y_M(a,x)q^{L(0)-\frac{c}{24}} \\
				&=\sum_{i\geq -1} c_i \Res_x \iota_{x,w}((x-w)^i) w^{\wt a-i-1} x^{-\wt a} F((a,x)(v,w),q)\\
				&\ -\sum_{i\geq -1} c_i \Res_x \iota_{w,x}((x-w)^i) w^{\wt a-i-1} x^{-\wt a} F((v,w)(a,x),q)\\
				&=\sum_{i\geq -1} c_i \Res_x \left(\iota_{x,w}((x-w)^i) w^{\wt a-i-1} x^{-\wt a}-\iota_{w,x}((x-w)^i) w^{\wt a-i-1}x^{-\wt a} \right)\\
				&\qquad \cdot w^{\deg v+h_W} \trM a_{(\wt a-1)}I(v,w)q^{L(0)-\frac{c}{24}}\\
				&\ +\sum_{i\geq -1} c_i\Res_x\bigg(\iota_{x,w}((x-w)^i) w^{\wt a-i-1} x^{-\wt a}P_{1}\left(\frac{w}{x},q\right)\\
				&\qquad -\iota_{w,x}((x-w)^i) w^{\wt a-i-1}x^{-\wt a}\left(P_{1}\left(\frac{wq}{x},q\right)-2\pi i\right)\bigg)\cdot F((a[0]v,w),q)\\
				&\  +\sum_{m\geq 1}\sum_{i\geq -1} c_i\Res_x\left(\iota_{x,w}((x-w)^i) w^{\wt a-i-1} x^{-\wt a}P_{m+1}\bigg(\frac{w}{x},q\right)\\
				&\qquad -\iota_{w,x}((x-w)^i) w^{\wt a-i-1}x^{-\wt a}P_{m+1}\left(\frac{wq}{x},q\right)\bigg) \cdot F((a[m]v,w),q)\\
				&=w^{\deg v+h_W} \trM a_{(\wt a-1)}I(v,w) q^{L(0)-\frac{c}{24}}\\
				&\ -\pi i F((a[0]v,w),q) +\sum_{k=1}^\infty\widetilde{G}_{2k}(q) F((a[2k-1]v,w),q),
			\end{align*}
			where the last equality follows from Lemma~\ref{lemma:intermediateformula}. 
			We claim that $F((a[0]v,w),q)=0$. Indeed, by \eqref{Fcomm}, we have 
			\begin{align*}
				0&=\Res_x x^{-1} \left(F((a,x)(v,w),q)-F((v,w)(a,x),q)\right)\\
				&=\Res_x  \left(x^{\wt a-1} w^{\deg v+ h_W} \trM Y(a,x)I(v,w)q^{L(0)-\frac{c}{24}}-x^{\wt a-1} w^{\deg v+ h_W}\trM I(v,w)Y(a,x)q^{L(0)-\frac{c}{24}}  \right)\\
				&=w^{\deg v+ h_W}  \trM a_{(\wt a-1)}I(v,w) q^{L(0)-\frac{c}{24}}-w^{\deg v+h_W} \trM I(v,w) a_{(\wt a-1)} q^{L(0)-\frac{c}{24}}\\
				&=w^{\deg v+ h_W}  \sum_{i\geq 0} \binom{\wt a-1}{i} \trM I(a_{(i)}v,w) w^{\wt a-1-i} q^{L(0)-\frac{c}{24}}\\
				&=F((a[0]v,w),q). 
			\end{align*}
			This proves \eqref{a[-1]recursive}. 
		\end{proof}
		As a Corollary of the proof, we have the following recursive formulas about the formal trace functions defined by intertwining operators, generalizing  \cite[Propositions 4.3.5, 4.3.6]{Zhu96}. 
		\begin{corollary}\label{coro:recursive}
			Let $a\in V$ and $v\in W$, and $m\geq 2$. We have following identities of formal power series in $q$: 
			\begin{align}
				&\trM o_I(a[-1]v)q^{L(0)-\frac{c}{24}}=\trM o(a)o_I(v)q^{L(0)-\frac{c}{24}}+\sum_{k=1}^\infty \widetilde{G}_{2k}(q) \trM o_I(a[2k-1]v)q^{L(0)-\frac{c}{24}},\label{traceformulafora[-1]}\\
				&\trM o_I(a[0]v)q^{L(0)-\frac{c}{24}}=0,\label{tracea[0]}\\
				&\trM o_I(a[-m]v)q^{L(0)-\frac{c}{24}}+(-1)^m \sum_{k=1}^\infty \binom{2k-1}{m-1}\widetilde{G}_{2k}(q) \trM o_I(a[2k-m]v)q^{L(0)-\frac{c}{24}}=0.\label{tracea[m]}
			\end{align}
		\end{corollary}
		\begin{proof}
			\eqref{traceformulafora[-1]} follows from \eqref{a[-1]recursive}, \eqref{tracea[0]} follows from the fact that $F((a[0]v,w),q)=0$ as in the proof of Theorem~\ref{thm:4,1}. We prove \eqref{tracea[m]} by induction on $m$. First consider $m=2$. Note that $$L[-1]=\om[0]=(2\pi i)^{-1} \Res_z Y(\om,z)(1+z)=(2\pi i)^{-1} (L(-1)+L(0)).$$ 
			It follows that $o(L[-1]a)=(2\pi i)^{-1}o(L(-1)a+L(0)a)=0$. Replace $a$ by $L[-1]a$ in \eqref{traceformulafora[-1]}, then by $Y_W[L[-1]a,z]v=\frac{d}{dz}Y_W[a,z]v$, we have: 
			\begin{align*}
				&\trM o_I(a[-2]v)q^{L(0)-\frac{c}{24}}\\
				&= \trM o_I((L[-1]a)[-1]v)q^{L(0)-\frac{c}{24}}\\
				&=\trM o(L[-1]a)o_I(v)q^{L(0)-\frac{c}{24}}+\sum_{k=1}^\infty \widetilde{G}_{2k}(q) \trM o_I( (L[-1]a)[2k-1]v) q^{L(0)-\frac{c}{24}}\\
				&=(-1)\sum_{k=1}^\infty (2k-1) \widetilde{G}_{2k}(q) \trM o_I(a[2k-2]v) q^{L(0)-\frac{c}{24}}.
			\end{align*}
			The induction step follows from a similar argument using the $L[-1]$-derivative property for $Y_W[\cdot,z]$, we omit the details. 
		\end{proof}
		
		A special case of \eqref{traceformulafora[-1]} leads to a first-order differential equation for the formal trace functions, which is crucial our later discussions. 
		\begin{corollary}
			For any $v\in W$, the following formal differential equation holds in $\C[\![q]\!]q^{h_M-\frac{c}{24}}$:
			\begin{equation}\label{eq:diffeqfortrace}
				\begin{aligned}
					&(2\pi i)^2\left(q\frac{d}{dq}\right)\left(\tr|_Mo_I(v) q^{L(0)-\frac{c}{24}}\right)\\
					&=\tr|_Mo_I(L[-2]v) q^{L(0)-\frac{c}{24}} -\sum_{k=1}^\infty \widetilde{G}_{2k}(q) \trM o_I(L[2k-2]v)q^{L(0)-\frac{c}{24}}
				\end{aligned}
			\end{equation}
		\end{corollary}	
		\begin{proof}
			
			By our convention \eqref{eq:oI}, $o_I(L(-1)v)=-(\deg(v)+h_W) o_I(v)$. It follows that  $$[L(0),o_I(v)]=o_I(L(-1)v)+o_I(L(0)v)=0.$$
			Hence $o(\om)o_I(v)=L(0)o_I(v)=o_I(v)L(0)$, and 
			$$\tr|_M o(\om) o_I(v)q^{L(0)-\frac{c}{24}}=\tr|_M o_I(v)L(0)q^{L(0)-\frac{c}{24}}=\left(q\frac{d}{dq}+\frac{c}{24}\right)\left(\tr|_M o_I(v)q^{L(0)-\frac{c}{24}}\right).$$
			Since $\widetilde{\om}=(2\pi i)^2 (\om-\frac{c}{24})$ and $ o(\widetilde{\om})=(2\pi i)^2(o(\om)-\frac{c}{24} \cdot \Id)$, we have 
			\begin{equation}\label{eq:intermediatetrace}
				\tr|_M o(\widetilde{\om})o_I(v)q^{L(0)-\frac{c}{24}}=(2\pi i)^2\left(q\frac{d}{dq}\right) \cdot \tr|_Mo_I(v)q^{L(0)-\frac{c}{24}}.
			\end{equation}
			Let $a=\widetilde{\om}$ in \eqref{traceformulafora[-1]}, it follows from \eqref{eq:intermediatetrace} that  
			\begin{align*}
				&	\tr|_M o_I(L[-2]v) q^{L(0)-\frac{c}{24}}=\tr|_M o_I(\widetilde{\om}[-1]v) q^{L(0)-\frac{c}{24}}\\
				&= \trM o(\widetilde{\om})o_I(v)q^{L(0)-\frac{c}{24}}+\sum_{k=1}^\infty \widetilde{G}_{2k}(q) \trM o_I(\widetilde{\om}[2k-1]v)q^{L(0)-\frac{c}{24}}\\
				&=(2\pi i)^2\left(q\frac{d}{dq}\right) \cdot \tr|_Mo_I(v)q^{L(0)-\frac{c}{24}} +\sum_{k=1}^\infty \widetilde{G}_{2k}(q) \trM o_I(L[2k-2]v)q^{L(0)-\frac{c}{24}}
			\end{align*}
			This proves \eqref{eq:diffeqfortrace}. 
		\end{proof}

		\subsection{Convergence of the formal trace functions}\label{sec:4c}
		Now we prove that the 
		formal trace function 
		\[
		\tr|_M o_I(\cdot) q^{L(0)-\frac{c}{24}}: W\ra \C[\![q]\!] q^{h_M-\frac{c}{24}}
		\]
		converges in the domain $|q|<1$ for any $v\in W$. 
		Consider the space of formal coinvariants \eqref{eq:formalcoinv}: 
		\[
		[W\otimes R]_{(E_\tau, \mpt{p},z)}=W\otimes R/\L_{E_\tau\bs \mpt{p}}(V). (W\otimes R).
		\]
		Extend $\tr|_M o_I(\cdot) q^{L(0)-\frac{c}{24}}$ naturally to 
		\[
		\tr|_M o_I(\cdot) q^{L(0)-\frac{c}{24}}:  W\otimes R\ra \C[\![q]\!] q^{h_M-\frac{c}{24}},\quad v\otimes f(q)\mapsto \tr|_M o_I(v) q^{L(0)-\frac{c}{24}}\cdot f(q). 
		\]
		Then by \eqref{tracea[0]} and \eqref{tracea[m]}, the formal trace function above vanishes on the spanning elements of $\L_{E_\tau\bs \mpt{p}}(V). (W\otimes R)$, see \eqref{eq:formalspanning}. Therefore, we have a well-defined formal  linear functional:
		\begin{equation}\label{eq:formaltr}
			\tr|_M o_I(\cdot) q^{L(0)-\frac{c}{24}}: 	[W\otimes R]_{(E_\tau, \mpt{p},z)}\ra \C[\![q]\!]q^{h_M-\frac{c}{24}}. 
		\end{equation}
		
		The following Lemma generalizes \cite[Lemma 4.4.3, 4.4.4]{Zhu96}:
		\begin{lemma}\label{lm:4.19}
			Let $v\in W$ be a highest-weight vector for the Virasoro algebra defined by $\om$. i.e., $L(m)v=0$ for $m\geq 1$, and $L(0)v=(\wt v)v$. Then for any $n_1,\ds, n_r\in \Z_{>0}$, we have 
			\begin{equation}\label{eq:tracediff1}
				\tr|_Mo_I(L[-n_1]\ds L[-n_r]v)q^{L(0)-\frac{c}{24}}=\sum_{i=0}^r f_i(q)\cdot \left(q\frac{d}{dq}\right)^i \tr|_M o_I(v) q^{L(0)-\frac{c}{24}},
			\end{equation}
			for some $f_i(q)\in R=\C[\widetilde{G}_{2}(q),\widetilde{G}_{4}(q),\widetilde{G}_{6}(q)]$. 
		\end{lemma}
		\begin{proof}
			It follows from \eqref{eq:defa[m]} that $L[n]=(2\pi i)^{-n}(L(n)+\sum_{i\geq 1}\mu_i L(n+i))$. In particular, $L[n]v=0$ for all $n>0$ and $L[0]v=L(0)v=(\wt v)v$. 
			
			We use induction on $r$ to show \eqref{eq:tracediff1}. When $r=1$, we use another induction on $n_1$. For $n_1=1$, it follows from \eqref{tracea[0]} that 
			\[
			\tr|_M o_I(L[-1]v)q^{L(0)-\frac{c}{24}}=\tr|_M o_I(\widetilde{\om}[0]v)q^{L(0)-\frac{c}{24}}=0.
			\] 
			For $n_1=2$,  since $\widetilde{\om}[2k-1]v=L[2k-2]v=\delta_{k,1} (\wt v)v$. By \eqref{traceformulafora[-1]} and  \eqref{eq:intermediatetrace}, we have 
			\begin{equation}\label{eq:L[-2]diff}
				\begin{aligned}
					\tr|_M o_I(L[-2]v) q^{L(0)-\frac{c}{24}}&=\tr|_M o_I(\widetilde{\om}[-1]v) q^{L(0)-\frac{c}{24}}\\
					&= \trM o(\widetilde{\om})o_I(v)q^{L(0)-\frac{c}{24}}+\sum_{k=1}^\infty \widetilde{G}_{2k}(q) \trM o_I(\widetilde{\om}[2k-1]v)q^{L(0)-\frac{c}{24}}\\
					&=(2\pi i)^2\left(q\frac{d}{dq}+(\wt v)\cdot \widetilde{G}_{2}(q)\right) \cdot \tr|_Mo_I(v)q^{L(0)-\frac{c}{24}}.
				\end{aligned}
			\end{equation}
			This proves \eqref{eq:tracediff1} for $n_1=2$. Now let  $n_1>2$. It follows from \eqref{tracea[m]} that
			\begin{align*}
				\trM o_I(L[-n_1]v)q^{L(0)-\frac{c}{24}}=(-1)^{n_1} \sum_{k=1}^{\lceil n_1/2 \rceil} \binom{2k-1}{n_1-2}\widetilde{G}_{2k}(q) \trM o_I(L[2k-n_1]v)q^{L(0)-\frac{c}{24}}.
			\end{align*}
			Since $n_1-2k<n_1$ for $k\geq 1$, and $\widetilde{G}_{2k}(q)\in R$ for $k\geq 1$, then by the induction hypothesis for $n_1$, the series $\trM o_I(L[-n_1]v)q^{L(0)-\frac{c}{24}}$ satisfies condition \eqref{eq:tracediff1} with $r=1$.

			Now suppose  	 \eqref{eq:tracediff1}  holds for smaller $r$. 
			Let $u=L[-n_2]\ds L[-n_r]v$. We use another induction on $n_1$ to show that $\trM o_I(L[-n_1]u)q^{L(0)-\frac{c}{24}}$ satisfies \eqref{eq:tracediff1}. For $n_1=2$, using the Virasoro bracket relation and the fact that $L[2k-2]v=\delta_{k,1} (\wt v)v$, we have 
			\[
			L[2k-2]u=\sum_{m_2,\ds, m_s>0} \la_{m_2,\ds, m_s} L[-m_2]\ds  L[-m_s]v,\quad s\leq r. 
			\]
			Then by the induction hypothesis for $r$, the formal series
			$$\trM o_I(u)q^{L(0)-\frac{c}{24}}\quad \mathrm{and}\quad \trM o_I(L[2k-2]u)q^{L(0)-\frac{c}{24}},\quad k\geq 1,$$ satisfy \eqref{eq:tracediff1} for certain choices of coefficients $f_i(q)\in \C[ \widetilde{G}_{2}(q) , \widetilde{G}_{4}(q) , \widetilde{G}_{6}(q) ]$, with the highest power $i$ of $(q\frac{d}{dq})^i$ at most $r-1$.
			Apply  \eqref{eq:intermediatetrace} and \eqref{traceformulafora[-1]}  again, we have 
			\begin{align*}
				&	\tr|_M o_I(L[-2]u) q^{L(0)-\frac{c}{24}}=\tr|_M o_I(\widetilde{\om}[-1]v) q^{L(0)-\frac{c}{24}}\\
				&= \trM o(\widetilde{\om})o_I(u)q^{L(0)-\frac{c}{24}}+\sum_{k=1}^\infty \widetilde{G}_{2k}(q) \trM o_I(\widetilde{\om}[2k-1]u)q^{L(0)-\frac{c}{24}}\\
				&=(2\pi i)^2\left(q\frac{d}{dq}\right)  \tr|_Mo_I(u)q^{L(0)-\frac{c}{24}}++\sum_{k=1}^\infty \widetilde{G}_{2k}(q) \trM o_I(L[2k-2]u)q^{L(0)-\frac{c}{24}}\\
				&=\sum_{i=0}^r h_i(q)\cdot \left(q\frac{d}{dq}\right)^i \tr|_M o_I(v) q^{L(0)-\frac{c}{24}},
			\end{align*}
			This proves \eqref{eq:tracediff1} for $r>1$ and $n_1=2$. For $n_1>2$, by \eqref{tracea[m]} again, we have
			\begin{align*}
				\trM o_I(L[-n_1]u)q^{L(0)-\frac{c}{24}}=(-1)^{n_1} \sum_{k=1}^{K} \binom{2k-1}{n_1-2}\widetilde{G}_{2k}(q) \trM o_I(L[2k-n_1]u)q^{L(0)-\frac{c}{24}},
			\end{align*}
			where $K=\lceil (n_1+n_2+\ds +n_r)/2 \rceil$. Since $n_1-2k<n_1$, we may apply the induction hypothesis to $n_1-2k$ again and obtain  \eqref{eq:tracediff1} for $	\trM o_I(L[-n_1]u)q^{L(0)-\frac{c}{24}}$. 
		\end{proof}
		
		The following theorem, which is a generalization of \cite[Theorem 4.4.1]{Zhu96} under weaker conditions, shows that the formal trace functions associated to intertwining operators give rise to conformal blocks in $ \mathscr{C}(E_\tau,\mpt{p},z,W)$. 
		
		\begin{theorem}\label{thm:conv}
			Assume there exists a finite-dimensional graded subspace $U\ssq W$ such that $W=U\op C_2(W)$, and $U\ssq \sum_{i\in \Lambda} \mathcal{U}(\mathrm{Vir}_\om).v^i$, where each $v^i\in W$ is a $\mathrm{Vir}_\om=\spn\{L(n),C: n\in \Z\}$ highest-weight vector. i.e., $L(0)v^i=h^i v^i$ and $L(n)v^i=0$ for all $n\geq 1$. 
			
			Then for any $u\in W$, the formal power series $\tr|_M o_I(u) q^{L(0)-\frac{c}{24}}$ converges locally uniformly on $\D=\{q\in \C: |q|<1\}$.
			In particular, fix $q=e^{2\pi i \tau}$, with $q^{h_W-\frac{c}{24}}=e^{2\pi i \tau (h_W-\frac{c}{24})}\in \C$,  then the formal linear functional \eqref{eq:formaltr} induces a linear functional on the vector space of coinvariants: 
			\begin{equation}\label{eq:defofvarphitau}
				\varphi_I(\tau):=\tr|_M o_I(\cdot) q^{L(0)-\frac{c}{24}}: 	[W]_{(E_\tau, \mpt{p},z)}\ra \C. 
			\end{equation}\label{eq:tracefunction}
			In other words, for a fixed $\tau\in \H$, $\varphi_I(\tau)\in \mathscr{C}(E_\tau,\mpt{p},z,W)$ for all $I\in I\fusion[W][M][M]$, see Definition~\ref{def:oneptcfb}. 
		\end{theorem}
		\begin{proof}
			Let $\mathrm{Vir}_{\widetilde{\om}}=\spn\{L[n],C :n\in \Z\}$. Then $v^i$ are also $\mathrm{Vir}_{\widetilde{\om}}$-highest weight vectors by Lemma~\ref{lm:L[0]actiononW}.  We claim that 
			$\mathcal{U}(\mathrm{Vir}_\om).v^i\ssq U(\mathrm{Vir}_{\widetilde{\om}}).v^i$ for all $i$.
			
			Indeed, recall that $Y_W[a,z]=Y_W(a,e^{2\pi i z}-1)e^{2\pi i z\cdot \wt a}$ and $\widetilde{\om}=(2\pi i)^2 (\om-\frac{c}{24})$. Then for any $n\geq 1$, with the change of variable $w=e^{2\pi i z}-1$, we have 
			\[\begin{aligned}
				L(-n)&=\Res_{w}w^{-n+1}Y_W(\omega,w)=\Res_z (e^{2\pi i z}-1)^{-n+1} Y_W(\om, e^{2\pi i z}-1) (2\pi i) e^{2\pi iz} \\
				&= \Res_z (2\pi i)^{-1} (e^{2\pi iz}-1)^{-n+1} Y_W[\tilde{\om},z]e^{-2\pi i z} +\Res_z \frac{2\pi i c}{24} (e^{2\pi i z}-1)^{-n+1} e^{-2\pi i z}\\
				&=(2\pi i) ^{-n} \left(L[-n]+\sum_{i\geq 1}l_i L[-n+i]\right)+\frac{c}{24} (-1)^n (n-1).
			\end{aligned}
			\]
			Hence $L(-n)v^i\in U(\mathrm{Vir}_{\widetilde{\om}}).v^i$ for all $n\geq 1$. Use an induction on the length $r$ of the spanning element $L(-n_1)\ds L(-n_r)v^i$ of $\mathcal{U}(\mathrm{Vir}_\om).v^i$, together with the formula above, we can show that $L(-n_1)\ds L(-n_r)v^i\in U(\mathrm{Vir}_{\widetilde{\om}}).v^i$. Therefore, 
			\begin{equation}\label{eq:Ucontanment}
				U\ssq \sum_{i\in \Lambda}U(\mathrm{Vir}_{\widetilde{\om}}).v^i=\spn\{L[-n_1]\ds L[-n_r]v^i:n_1\geq \ds \geq n^r\geq 0,\ i\in \Lambda\}.
			\end{equation}
			
			By Lemma~\ref{lm:L[-2]}, we have $[W\otimes R]_{(E_\tau, \mpt{p},z)}=[U\otimes R]_{(E_\tau, \mpt{p},z)}$. Then the trace function \eqref{eq:formaltr} can be viewed as a formal linear functional: 
			\[
			\tr|_M o_I(\cdot) q^{L(0)-\frac{c}{24}}: [U\otimes R]_{(E_\tau, \mpt{p},z)}\ra \C[\![q]\!]q^{h_M-\frac{c}{24}}.
			\]
			It suffices to show the convergence of $\tr|_M o_I(u) q^{L(0)-\frac{c}{24}}$ for $u\in U$. 
			By \eqref{eq:Ucontanment}, any $u\in U$ can be written as sum of $L[-n_1]\ds L[-n_r]v$, where $v\in W$ is a highest-weight vector for $\mathrm{Vir}_\om$. Since elements $f_i(q)\in R= \C[ \widetilde{G}_{2}(q) , \widetilde{G}_{4}(q) , \widetilde{G}_{6}(q) ]$ converges locally uniformly on $\{q\in \C: |q|<1\}$, then by 
			\eqref{eq:tracediff1}, we only need to show the convergence of $\tr|_M o_I(v) q^{L(0)-\frac{c}{24}}$. 
			
			By Proposition~\ref{prop:C2coinv}, and the fact that $\tr|_Mo_I(\cdot) q^{L(0)-\frac{c}{24}}$ vanishes on $\L_{E_\tau\bs \mpt{p}}(V). (W\otimes R)$,  for the Virasoro highest-weight vector $v\in W$, there exists $s>0$ and $g_i(q)\in R$ such that
			\begin{equation}\label{eq:intermediatetra}
				\tr|_M o_I(L[-2]^sv) q^{L(0)-\frac{c}{24}}+\sum_{i=0}^{s-1} g_i(q)\cdot   \tr|_M o_I(L[-2]^i v) q^{L(0)-\frac{c}{24}}=0.
			\end{equation}
			Apply Lemma~\ref{lm:4.19} to the terms in \eqref{eq:intermediatetra}, we have a differential equation for the formal trace function, in view of \eqref{eq:L[-2]diff}: 
			\begin{equation}\label{eq:higherdiffeq}
				(2\pi i)^{2s} \left(q\frac{d}{dq}\right)^s 	\left(\tr|_M o_I(v) q^{L(0)-\frac{c}{24}}\right)+\sum_{i=0}^{s-1} h_i(q)\cdot  \left(q\frac{d}{dq}\right)^i  	\left(\tr|_M o_I(v) q^{L(0)-\frac{c}{24}}\right)=0,
			\end{equation}
			where $h_i(q)\in  \C[ \widetilde{G}_{2}(q) , \widetilde{G}_{4}(q) , \widetilde{G}_{6}(q) ]$ converges locally uniformly on $|q|<1$. Therefore, the solution $\tr|_M o_I(v) q^{L(0)-\frac{c}{24}}$ also converges locally uniformly on $|q|<1$. 
		\end{proof}

		For a given $q=e^{2\pi i\tau}$, with $\tau\in \H$, the formal series $\widetilde{G}_{2k}(q)$ becomes the Fourier expansion of the Eisenstein series  $G_{2k}(\tau)$: 
		\begin{equation}\label{eq:G2kfunction}
			G_{2k}(\tau)=2\zeta(2k)+\frac{2(2\pi i)^{2k}}{(2k-1)!}\sum_{n=1}^\infty \si_{2k-1}(n) q^n. 
		\end{equation}
		Thus, we can rewrite the condition \eqref{eq:defoneptcfb} as the a condition involves $q$ which is aligned with the properties of trace functions: 
		
		A linear functional $\varphi(\tau):W\ra \C$  is in $ \mathscr{C}(E_\tau,\mpt{p},z,W)$ if and only if 
		\begin{equation}\label{eq:finaldefofcfb}
			\begin{aligned}
				&\braket*{\varphi(\tau)}{a[0]v}=0,\\
				&	\braket*{\varphi(\tau)}{a[-m]v+(-1)^m\sum_{k=1}^\infty \binom{2k-1}{m-1}G_{2k}(\tau) a[2k-m]v}=0,\quad m\geq 2, 
			\end{aligned}
		\end{equation}
		for any $a\in V$ and $v\in W$. By Theorem~\ref{thm:conv},  $\varphi_I(\tau)=\tr|_M o_I(\cdot) q^{L(0)-\frac{c}{24}}$ satisfies \eqref{eq:finaldefofcfb}.

		\subsubsection{The Virasoro condition for affine VOAs}
		In Theorem~\ref{thm:conv}, instead of assuming $W$ is a sum of Virasoro highest-weight modules as  in \cite[Theorem 4.2.2]{Zhu96}, we only assumed a weaker condition:
		\begin{equation}\label{eq:Vircond}
			W=U\op C_2(W)\quad \mathrm{and}\quad U\ssq \sum_{i\in \Lambda} \mathcal{U}(\mathrm{Vir}_\om).v^i
		\end{equation}
		However, this condition 
		is not obviously true in general. Although it is trivial for the discrete series Virasoro VOA modules $W=L(c_{p,q},h_{m,n})$ \cite{W93,DMZ94}, since $W$ is $C_2$-cofinite \cite{GN03,ABD04} and $W$ itself is $\mathcal{U}(\mathrm{Vir}_\om).v_{c,h}$.

		We verify this condition for the affine VOA $V=L_{\hat{\g}}(k,0)$ at positive integral level $k$ associated to a simple Lie algebra $\g$ \cite{FZ92}. Recall that $V$ is strongly rational \cite{DLM97,DLM00}.  The Virasoro element of $V$ is given by the Segal-Sugawara construction:
		\[
		\om=\frac{1}{2(h^\vee+k)}\sum_{i=1}^r u^i(-1)u_i(-1)\vac,\quad\mathrm{with}\quad  [L(n),a(m)]=-ma(n+m),
		\]
		where $m,n\in \Z$, $a\in \g$, and $\{u^1,\ds, u^r\}$ and $\{u_1,\ds, u_r\}$ are orthonormal bases of $\g$, with respect to the normalized Killing form such that $(\theta|\theta)=2$. Let $P^k_+=\{\la\in P_+: (\la|\theta)\leq k \}$. Then the irreducible $V$-modules are given by $\mathscr{W}=\{L_{\hat{\g}}(k,\lambda): \la\in P^k_+\}$.  The central charge and conformal weights of irreducible modules are given by 
		\[
		c=\frac{k \dim \g}{k+h^\vee},\quad h_{{L_{\hat{\g}}(k,\lambda)}}=\frac{(\la+2\rho|\la)}{2(h^\vee+k)},\quad \la\in P^k_+.
		\]
		As a module over the affine Kac-Moody algebra $\tilde{\g}=\hat{\g}\op \C d$, where $d$ acts as $-L(0)$ on $\hat{\g}$, the VOA module $L_{\hat{\g}}(k,\lambda)$ is isomorphic to the integrable highest-weight $\tilde{\g}$-module $L(\Lambda)$, where 
		\[
		\Lambda=\la-\frac{(\la+2\rho|\la)}{2(h^\vee+k)}\delta+k\Lambda_0\in \hat{P}_+,
		\]
		see \cite[Section 6.2]{LL04}. It follows from \cite[Theorem 11.7]{Kac90} that the $\hat{\g}$-module $L_{\hat{\g}}(k,\lambda)$ is unitarizable. i.e., there exists a positive-definite $\hat{\g}$-invariant Hermitian form $H(\cdot,\cdot): L(\Lambda)\times L(\Lambda)\ra \C$, with $H(v_\Lambda,v_{\Lambda})=1$, where $v_\Lambda$ is the $\hat{\g}$-highest-weight vector. Furthermore, by \cite[Proposition 12.8]{Kac90}, the Segal-Sugawara $\mathrm{Vir}_\om$-action on $W=L(\Lambda)\cong L_{\hat{\g}}(k,\lambda)$ satisfies the following adjoint property: 
		\[
		H(L(n)u,v)=H(u,L(n)^\dagger v)=H(u,L(-n)v),\quad n\in \Z,\ u,v\in W.
		\]
		In other words, the representation of $\mathrm{Vir}_\om$ on $W$ is unitary. 
		
		Note that $W(0)$ consists of $\mathrm{Vir}_\om$-highest weight vectors. Then $W=\mathcal{U}(\mathrm{Vir}_\om).W(0)\op N$, where $N\leq W$ is the orthogonal complement of $\mathcal{U}(\mathrm{Vir}_\om).W(0)$, which is a $\mathrm{Vir}_\om$-submodule. Consider the $L(0)$-eigenspace decomposition $N=\bigoplus_{n=n_0}^\infty N(n)$, with $0\neq N(n_0)$ consists of $\mathrm{Vir}_\om$-highest weight vectors. We have $W=\mathcal{U}(\mathrm{Vir}_\om).W(0)\op \mathcal{U}(\mathrm{Vir}_\om).N(n_0)\op N'$, where $N'$ is a $\mathrm{Vir}_\om$-submodule. Proceed like this, we have $W=\sum_{i\in \Lambda'} \mathcal{U}(\mathrm{Vir}_\om).v^i$, where $v^i$ are $\mathrm{Vir}_\om$-highest weight vectors. Hence the same condition is true for the $C_2$-complement $U\subset W$. 
		
		In conclusion, we have the following sufficient condition for the Virasoro condition \eqref{eq:Vircond}: 
		
		\begin{proposition}\label{prop:Virasuffcond}
			Let $V$ be a strongly rational VOA, and $W$ be an irreducible $V$-module. If $W$ is a $\mathrm{Vir}_\om$-unitary representation. i.e., if there exists a positive-definite Hermitian form $H:W\times W\ra \C$ such that 
			$
			H(L(n)u,v)=H(u,L(-n)v),
			$ for all $n\in \Z$ and $u,v\in W$, then the condition \eqref{eq:Vircond} is satisfied. In particular, it is satisfied for all irreducible modules over the affine VOAs $L_{\hat{\g}}(k,0)$, with $k\in \Z_{>0}$. 
		\end{proposition}


		\section{Modular invariance of trace functions associated to intertwining operators}\label{sec:modularinv}
		In this section, we prove our main theorem of generalized modular invariance of intertwining operators of VOAs. Throughout this section, we assume that $V$ is a strongly rational VOA, see Definition~\ref{def:propertiesofVOAs}. Let $\spW=\{V=M^0,M^1,\ds,M^N\}$ be the set of irreducible $V$-modules up to isomorphism, and let $W\in \spW$ be a fixed irreducible $V$-module. 
		Unless we state otherwise, whenever $q$ and $\tau$ appear simoutaneously, we always assume  $q=e^{2\pi i\tau}$. 
		
		\subsection{Conformal block bundle on $\H$ and the connection}\label{sec:6a}
		We first show that the trace functions associated to intertwining operators \eqref{eq:defofvarphitau} are horizontal sections of a vector bundle on $\H$ with flat connection. 
		
		Let $\mathscr{C}(W)$ be the conformal block bundle on $\overline{\mathscr{M}}_{1,1}$ as in  \nameref{lem:VB}, which restricts to a vector bundle on the open subset $\mathscr{M}_{1,1}$. Note that $\mathscr{M}_{1,1}\cong [\SL(2,\Z)\bs\!\!\bs \H]$ is the orbifold/stack quotient, with the orbifold universal covering map $\pi:\H \ra \mathscr{M}_{1,1}$ \cite[Section 3]{Ha08}.
		Let
		\[
		\mathscr{C}_\H(W):=\pi^\ast (\mathscr{C}(W)|_{\mathscr{M}_{1,1}})
		\]
		be the pull-back bundle on $\H$. We call $\mathscr{C}_\H(W)$ the {\bf conformal block bundle on $\H$} associated to the VOA module $W$. 
		
		For any $\tau\in \H$, since the vector space of conformal blocks does not depend to the choice of local coordinates around the marked points \cite{FBZ04,DGT21}, we have the following identification of the fibers:
		\begin{equation}\label{eq:fiberid}
			\mathscr{C}_\H(W)_\tau\cong (\mathscr{C}(W)|_{\mathscr{M}_{1,1}})_{\pi(\tau)}\cong \mathscr{C}(W)_{(E_\tau,\mpt{p})}\cong  \mathscr{C}(E_\tau,\mpt{p},z,W).
		\end{equation}
		
		\begin{definition}\label{def:sectionfocfb}
			We call $\varphi\in \Gamma(\H, \mathscr{C}_\H(W))$ a {\bf  section of conformal blocks}. 
			
			Let $\widetilde{W^\ast}$ be the trivial vector bundle on $\H$ defined by the dual vector space $W^\ast$. Under the identification \eqref{eq:fiberid}, a  section $\varphi: \H\ra  \widetilde{W^\ast}$ is a section of conformal blocks if and only if for any $\tau\in \H$, the linear map $\varphi(\tau): W\ra \C$ satisfies  \eqref{eq:finaldefofcfb}. 
		\end{definition}
		
		In particular, assume $W$ satisfies the condition in Theorem~\ref{thm:conv}. Then for any intertwining operator $I\in \fusion[W][M][M]$, the associated trace function:
		\begin{equation}\label{eq:sectiontrace}
			\varphi_I: \H\ra \mathscr{C}_\H(W),\quad \tau\mapsto \tr|_M o_I(\cdot) q^{L(0)-\frac{c}{24}}
		\end{equation}
		is a section of conformal blocks.

		\subsubsection{Connection of a vector bundle}
		Inspired by the differential equation satisfied by the formal trace functions \eqref{eq:diffeqfortrace}, we construct a connection $\nabla$ of the bundle $\mathscr{C}_\H(W)$ on $\H$. 
		
		We briefly recall the horizontal sections of a vector bundle with connections, see  \cite{GH78,Tu17} for more details. 
		Let \(X\) be a smooth complex manifold and let \(E\to X\) be a vector bundle,
		with sheaf of sections \(\mathcal E\). A connection on \(E\) is a
		\(\mathbb C\)-linear map
		\[
		\nabla:\mathcal E\to \mathcal E\otimes_{\mathcal O_X}\Omega^1_X
		\]
		satisfying the Leibniz rule 
		$
		\nabla(fs)=f\nabla(s)+s\otimes df.
		$
		Equivalently, for every vector field \(\xi\in\mathscr T_X\), a connection
		determines a \(\mathbb C\)-linear operator
		$
		\nabla_\xi:\mathcal E\to \mathcal E
		$
		such that
		\begin{equation}\label{eq:Leibniz}
			\nabla_\xi(fs)=\xi(f)s+f\nabla_\xi(s)
		\end{equation}
		for any open subset \(U\subseteq X\), \(f\in\Gamma(U,\mathcal O_X)\), and
		\(s\in\Gamma(U,\mathcal E)\).  A local section \(s\in \Gamma(U,\mathcal{E})\) is called {\bf horizontal} with
		respect to \(\nabla\) if
		\[
		\nabla_\xi(s)=0,\quad \forall \xi\in \Gamma(U,\mathcal{T}_X). 
		\]
		
		Now let \(X=\H\), which is one-dimensional. Note that the vector field
		$$
		\xi=q\frac{d}{dq}=\frac{1}{2\pi i}\frac{d}{d\tau}\in \mathscr{T}_\H
		$$
		is nowhere vanishing on $\H$. Hence $\mathscr{T}_\H=\O_\H \xi $. In particular, in order to define a connection $\nabla$ on the vector bundle $\mathscr{C}_\H(W)$ on $\H$, it suffices to define a $\C$-linear operator
		\[
		\nabla_\xi: \mathscr{C}_\H(W)\ra \mathscr{C}_\H(W)
		\]
		satisfying the Leibniz rule \eqref{eq:Leibniz}. 
		
		\subsubsection{Construction of the connection on $\mathscr{C}_\H(W)$} 
		Now we explicitly construct the connection on $\mathscr{C}_\H(W)$. 	We first introduce some short-hand notation. Given a meromorphic function $f(z)$ with possible pole at $0$ and $a\in V$, define 
		\begin{equation}\label{eq:Rf}
			R_{f(z)}(a):=\Res_z Y_W[a,z]\iota(f(z)):W\ra W. 
		\end{equation}
		In particular, for any $a\in V$ and $v\in W$, we have 
		\begin{align*}
			R_1(a)v&=a[0]v,\\
			R_{\wp_m(z,\tau)}(a)v&=a[-m]v+(-1)^m\sum_{k=1}^\infty \binom{2k-1}{m-1}G_{2k}(\tau) a[2k-m]v,\quad m\geq 2. 
		\end{align*}
		Hence we can rewrite  \eqref{eq:finaldefofcfb} as $\varphi(\tau):W\ra \C$  is in $ \mathscr{C}(E_\tau,\mpt{p},z,W)$ if and only if 
		\begin{equation}\label{eq:concfb}
			\braket*{\varphi(\tau)}{R_f(a)v}=0,\quad \forall a\in V,\ f\in H^0(E_\tau\bs\mpt{p},\O_{E_\tau})=\spn\{1,\wp_m(z,\tau):m\geq 2\}. 
		\end{equation}

		In view of the differential equation \eqref{eq:diffeqfortrace} and the Laurent series expansion of $\wp_1(z,\tau)$ \eqref{eq:Laurentwp1}, we define
		\begin{equation}\label{eq:defTq}
			T(\tau):=R_{\wp_1(z,\tau)}(\widetilde{\om})=L[-2]-\sum_{k\geq 1} G_{2k}(\tau) L[2k-2].
		\end{equation}
		Given a section of conformal blocks $\varphi\in \Gamma(\H, \mathscr{C}_\H(W))$, define 
		$
		\nabla_\xi\varphi: \H\ra \widetilde{W^\ast} 
		$
		by letting 
		\begin{equation}\label{eq:defofconnectionvarphi}
			\begin{aligned}
				\braket*{\nabla_\xi\varphi(\tau)}{v}:&=\left(q\frac{d}{dq}\right)\braket*{\varphi(\tau)}{v}-\frac{1}{(2\pi i)^2} \braket*{\varphi(\tau)}{T(\tau)v},\quad v\in W,
			\end{aligned}
		\end{equation}
		where $\xi=q\frac{d}{dq}=\frac{1}{2\pi i}\frac{d}{d\tau}$. 
		It is easy to check that $\nabla_\xi\varphi$ satisfies the Leibniz rule \eqref{eq:Leibniz}. 
		
		\begin{proposition}\label{prop:connection}
			The section $\nabla_\xi\varphi$ defined by \eqref{eq:defofconnectionvarphi} is a section of conformal blocks, see Definition~\ref{def:sectionfocfb}. In particular, $\nabla_\xi: \mathscr{C}_\H(W)\ra \mathscr{C}_\H(W)$ is a well-defined $\C$-linear operator, which defines a flat connection $\nabla$ on the vector bundle $ \mathscr{C}_\H(W)$. 
		\end{proposition}
		\begin{proof}
			We need to show that $\braket*{\nabla_\xi\varphi(\tau)}{R_{1}(a)v}=0$ and  $\braket*{\nabla_\xi\varphi(\tau)}{R_{\wp_m(z,\tau)}(a)v}=0$ for all $a\in V$ and $m\geq 2$, in view of \eqref{eq:concfb}. 
			Since $
			\langle \varphi(\tau)\mid R_{\wp_m(z,\tau)}(a)v\rangle=0$ and $ R_{\wp_m(z,\tau)}(a)a$ is a vector valued holomorphic function in $\tau$, differentiating this equation with respect to $\xi=q\frac{d}{dq}=\frac{1}{2\pi i}\frac{d}{d\tau}$ gives
			\[
			\braket*{ \xi\varphi(\tau)}{R_{\wp_m(z,\tau)}(a) v}=-\braket*{\varphi(\tau)}{\xi(R_{\wp_m(z,\tau)}(a))v}.
			\]
			Similarly, $\braket*{\xi\varphi(\tau)}{R_1(a)v}=0$ since $\xi(1)=0$. 
			Moreover, since $\varphi$ is a section of conformal blocks, we have $\braket*{\varphi(\tau)}{R_{\wp_m(z,\tau)}(a)T(\tau)v}=0$ by \eqref{eq:finaldefofcfb}. 
			It follows that  
			\[
			\begin{aligned}
				\braket*{\nabla_\xi\varphi(\tau)}{ R_{\wp_m(z,\tau)}(a)v}
				&=
				\braket*{\xi\varphi(\tau)}{ R_{\wp_m(z,\tau)}(a)v}	-
				\frac{1}{(2\pi i)^2}
				\braket*{ \varphi(\tau)}{ T(\tau)R_{\wp_m(z,\tau)}(a)v} \\
				&=
				-\braket*{\varphi(\tau)}{\xi(R_{\wp_m(z,\tau)}(a))v}
				-\frac{1}{(2\pi i)^2}
				\braket*{ \varphi(\tau)}{ [T(\tau),R_{\wp_m(z,\tau)}(a)]v}\\
				&=-\braket*{\varphi(\tau)}{\left(R_{\xi(\wp_m(z,\tau))}(a)+\frac{1}{(2\pi i)^2}[T(\tau),R_{\wp_m(z,\tau)}(a)]\right)v}
			\end{aligned}
			\]
			Similarly, $\braket*{\nabla_\xi\varphi(\tau)}{ R_{1}(a)v}=-\frac{1}{(2\pi i)^2}\braket*{\varphi(\tau)}{[T(\tau),R_{1}(a)]v}$.
			Thus it is enough to prove that
			\[
			R_{\xi(\wp_m(z,\tau))}(a)+\frac{1}{(2\pi i)^2}[T(\tau),R_{\wp_m(z,\tau)}(a)]\quad \mathrm{and}\quad [T(\tau),R_{1}(a)]
			\]
			can be written as sum of elements of the form $R_{f}(b)$ for some $b\in V$ and $ f\in H^0(E_\tau\bs \mpt{p},\O_{E_\tau}),
			$ in view of \eqref{eq:concfb}.

			Let $g(x)\in \C(\!(x)\!)$, and $a,b,\in V$, the following equality follows from Jacobi identity of the  VOA module $(W,Y_W[\cdot,z])$ and properties of formal $\delta$-function: 
			\[
			\Res_x \left[ Y_W[b,x],Y_W[a,z] \right] g(x) =\sum_{i\geq 0} \frac{1}{i!} \frac{d^i}{dz^i} (g(z)) Y_W[b[i]a,z].
			\]
			The proof is a standard VOA argument, see \cite{DL93,FHL93,LL04}, we omit the details. Now apply it to $b=\widetilde{\om}$ and $g(x)=\iota_x(\wp_1(x,\tau))$, it follows from \eqref{eq:Rf} and \eqref{eq:defTq} that 
			\begin{align*}
				[T(\tau),Y_W[a,z]]&=\Res_x \left[Y_W[\widetilde{\om},x],Y_W[a,z]\right] \iota_x(\wp_1(x,\tau))\\
				&=\sum_{i\geq 0} \frac{1}{i!} \frac{d^i}{dz^i}(\iota_z(\wp_1(z,\tau))) Y_W[\widetilde{\om}[i]a,z]\\ 
				&=Y_W[L[-1]a,z]\iota_z(\wp_1(z,\tau))+
				\sum_{j\geq 0}
				\iota_z\left(\frac{1}{(j+1)!}
				\frac{d^{j+1}\wp_1(z,\tau)}{dz^{j+1}}\right)
				Y_W[L[j]a,z]\\
				&=\frac{\partial}{\partial z}\left(Y_W[a,z]\right)\iota_z(\wp_1(z,\tau))+
				\sum_{j\geq 0}(-1)^{j+1}
				\iota_z\left(\wp_{j+2}(z,\tau)\right)
				Y_W[L[j]a,z].
			\end{align*}
			Apply $\Res_z \iota_{z}(\wp_m(z,\tau))$ to the equation above, using \eqref{eq:Rf} and the fact that $\Res_z \frac{\partial}{\partial z}(f(z))g(z)=-\Res_zf(z) \frac{\partial}{\partial z}(g(z))$, we have 
			\begin{equation}\label{eq:Tqbracket}
				[T(\tau),R_{\wp_m(z,\tau)}(a)]
				=
				-R_{\frac{\partial}{\partial z}(\wp_m(z,\tau)\wp_1(z,\tau))}(a)
				+
				\sum_{j\geq 0}
				(-1)^{j+1}
				R_{\wp_m(z,\tau)\wp_{j+2}(z,\tau)}(L[j]a).
			\end{equation}
			Since $R_{f}(a)+R_g(a)=R_{f+g}(a)$ \eqref{eq:Rf} , we have 
			\begin{align*}
				&R_{\xi(\wp_m(z,\tau))}(a)+\frac{1}{(2\pi i)^2}[T(\tau),R_{\wp_m(z,\tau)}(a)]\\
				&=R_{\xi\wp_m(z,\tau)-\frac{1}{(2\pi i)^2}\frac{\partial }{\partial z}(\wp_m(z,\tau)\wp_1(z,\tau))}(a)
				+
				\frac{1}{(2\pi i)^2}
				\sum_{j\geq 0}
				(-1)^{j+1}
				R_{\wp_m(z,\tau)\wp_{j+2}(z,\tau)}(L[j]a).
			\end{align*}
			For each $j\geq 0$ and $m\geq 2$, the function $\wp_m(z,\tau)\wp_{j+2}(z,\tau)$ is meromorphic in $z$ with only possible pole at $z=0$, and is doubly periodic in $z$ with periods $1$ and $\tau$. Hence $\wp_m(z,\tau)\wp_{j+2}(z,\tau)\in H^0(E_\tau\bs\mpt{p},\O_{E_\tau})$. Moreover, by Lemma~\ref{lm:wpproperty}, $\xi\wp_m(z,\tau)-\frac{1}{(2\pi i)^2}\frac{\partial }{\partial z}(\wp_m(z,\tau)\wp_1(z,\tau))\in H^0(E_\tau\bs\mpt{p},\O_{E_\tau})$. This shows $$\braket*{\nabla_\xi\varphi(\tau)}{R_{\wp_m(z,\tau)}(a)v}=0.$$
			Finally, replace $\wp_m$ by $1$ in \eqref{eq:Tqbracket}, we have 
			\[
			[T(\tau),R_1(a)]=R_{\wp_2(z,\tau)}(a)+\sum_{j\geq 0}
			(-1)^{j+1}
			R_{\wp_{j+2}((z,\tau))}(L[j]a),
			\]
			where $\wp_2(z,\tau), \wp_{j+2}(z,\tau)\in H^0(E_\tau\bs\mpt{p},\O_{E_\tau})$ for all $j\geq 0$. This shows $\braket*{\nabla_\xi\varphi(\tau)}{R_{1}(a)v}=0$. Hence we have a connection $\nabla$ on  the vector bundle $ \mathscr{C}_\H(W)$. Since $\H$ is a one-dimensional complex manifold, $\nabla$ is automatically flat, see \cite{GH78}. 
		\end{proof}
		
		\begin{corollary}\label{coro:horizontalsection}
			Assume there exists a finite-dimensional graded subspace $U\ssq W$ such that $W=U\op C_2(W)$, and $U\ssq \sum_{i\in \Lambda} \mathcal{U}(\mathrm{Vir}_\om).v^i$, where each $v^i\in W$ is a $\mathrm{Vir}_\om=\spn\{L(n),C: n\in \Z\}$ highest-weight vector.
			
			Then the section of conformal blocks defined by trace functions $\varphi_I\in \Gamma(\H,\mathscr{C}_\H(W))$ \eqref{eq:sectiontrace} are horizontal with respect to the connection $\nabla$ \eqref{eq:defofconnectionvarphi}.
		\end{corollary}
		\begin{proof}
			By Theorem~\ref{thm:conv}, for any $\tau\in \H$, the trace function is convergent. Then the formal differential equation \eqref{eq:diffeqfortrace} becomes a real differential equation of the section $\varphi_I$: 
			\begin{align*}
				&\left(q\frac{d}{dq}\right)\braket*{\varphi_I(\tau)}{v}=\left(q\frac{d}{dq}\right)\left(\tr|_Mo_I(v) q^{L(0)-\frac{c}{24}}\right)\\
				&=\frac{1}{(2\pi i)^2}\tr|_Mo_I(L[-2]v) q^{L(0)-\frac{c}{24}} -\frac{1}{(2\pi i)^2}\sum_{k=1}^\infty G_{2k}(q) \trM o_I(L[2k-2]v)q^{L(0)-\frac{c}{24}}\\
				&=\frac{1}{(2\pi i)^2}\braket*{\varphi_I(\tau)}{\left(L[-2]-\sum_{k\geq 1} G_{2k}(q)L[2k-2]\right)v}\\
				&=\frac{1}{(2\pi i)^2} \braket*{\varphi_I(\tau)}{T(\tau)v},
			\end{align*}
			in view of \eqref{eq:defTq}. Then by  \eqref{eq:defofconnectionvarphi},  for any $\tau\in \H$ and $v\in W$, we have  
			\begin{align*}
				\braket*{\nabla_\xi\varphi_I(\tau)}{v}=\left(q\frac{d}{dq}\right)\braket*{\varphi_I(\tau)}{v}-\frac{1}{(2\pi i)^2} \braket*{\varphi_I(\tau)}{T(\tau)v}=0.
			\end{align*}
			Hence $\nabla_\xi\varphi_I=0$. Since $\mathscr{T}_\H=\O_\H \xi$ and  $\nabla_{f\xi}=f\nabla_\xi$ for all $f\in \O_\H$, it follows that $\varphi_I$ is a horizontal section of $\mathscr{C}_\H(W)$. 
		\end{proof}

		\subsection{Basis theorem for the fibers of $\mathscr{C}_\H(W)$}
		Now we use the connection $\nabla$ on $\mathscr{C}_\H(W)$ as well as the \nameref{lem:FT} and \nameref{lem:VB}   to show that the trace functions associated to intertwining operators form a basis of $\mathscr{C}(E_\tau,\mpt{p},z,W)$. 
		
		We first recall the following parallel transport isomorphisms between fibers of a vector bundle with connection, see, for instance, \cite{GH78,Tu17} for more details. We briefly sketch the proof for completeness. Some parts of the proof will be used later. 
		\begin{lemma}\label{lm:parallel}
			Let $X$ be a connected smooth complex manifold, and $\mathcal{E}\ra X$ be a vector bundle with connection $\nabla$. Let $s_1,\ds ,s_r\in \Gamma(X,\mathcal{E})$ be horizontal sections. If there exists $x_0\in X$ such that $\{s_1(x_0),\ds , s_r(x_0)\}$ are linearly independent in the fiber $\mathcal{E}_{x_0}$, then for any $x\in X$, $\{s_1(x),\ds , s_r(x)\}$ are linearly independent in the fiber $\mathcal{E}_x$. 
		\end{lemma}
		\begin{proof}
			Since $X$ is (path) connected, we can find a path $\eta:[0,1]\ra X$ such that $\eta(0)=x_0$ and $\eta(1)=x$. Recall that a section $s$ along $\eta$ is called parallel if $\nabla_{\eta'(t)}s(t)=0$. By the uniqueness of the solution of first-order linear ODE, for any $v\in \mathcal{E}_{x_0}$, there exists a unique parallel section $s_v(t)$ along $\eta$ such that $s_v(0)=v$. The {\em parallel transport} along $\eta$ is the linear isomorphism
			$P_\eta: \mathcal{E}_{x_0}\ra \mathcal{E}_{x}$ defined by $P_\eta(v)=s_v(1)$, whose inverse is given by the   parallel transport along the reversed path. 
			
			Note that if a section $s\in  \Gamma(X,\mathcal{E})$ is horizontal, i.e., $\nabla_{\xi}(s)=0$ for all $\xi\in \mathcal{T}_X$. Then the restriction of $s$ onto the path $s|_\eta$ is obviously parallel along $\eta$. Hence it is the unique parallel section along $\eta$ with initial condition $s|_{\eta}(0)=s(x_0)$. Thus, $P_\eta(s(x_0))=s|_\eta(1)=s(\eta(1))=s(x)$. Since $P_\eta$ is a linear isomorphism, if $\{s_1(x_0),\ds , s_r(x_0)\}$ are linearly independent in $\mathcal{E}_{x_0}$, then there image $\{s_1(x),\ds , s_r(x)\}$ under $P_\eta$ are linearly independent in $\mathcal{E}_x$. 
		\end{proof}
		
		\begin{lemma}\label{lm:ssrep}
			Let $A$ be a semisimple algebra,  $M$ be a left $A$-module, and $S$ be a simple left $A$-module. Assume $g_1,\ds, g_n$ is a basis of the vector space $\Hom_A(M,S)$, then the linear map
			\[
			g=\oplus_{i=1}^n g_i: M\ra nS,\quad g(v)=(g_1(v),\ds, g_n(v)),
			\]
			where $v\in M$, 
			is surjective. 
		\end{lemma}
		\begin{proof}
			Since $M$ is a semisimple $A$-module, by Schur's Lemma and the assumption, we have a submodule decomposition $M=(E\otimes_\C S)\op M'$, where $\dim_\C E=n$, and $E\otimes_\C S$ is the $S$-isotropic part of the $A$-module $M$. Moreover, 
			\[
			\Hom_A(M,S)\cong \Hom_{A}(E\otimes_\C S,S)\cong E^\ast,\quad g_j\mapsto \la_j,
			\]
			where $\{\la_1,\ds, \la_n\}$ is a basis of $E^\ast$ defined by  $g_j(e\otimes s)=\la_j(e) s$, for all $e\in E$ and $s\in S$. Then the map 
			$
			E\to \C^n, e\mapsto (\la_1(e),\ds, \la_n(e))
			$
			is an isomorphism, which induces an isomorphism 
			\[E\otimes_\C S\to nS,\quad e\otimes s\mapsto (\la_1(e)s,\ds, \la_n(e)s)=(g_1(e\otimes s),\ds, g_n(e\otimes s)).
			\]
			This map is the restriction of $g=\oplus_{i=1}^n g_i$ onto $E\otimes_\C S\leq M$. Hence $g$ is surjective. 
		\end{proof}

		For $k=0,1,\ds ,N$, let $I^k_1,\ds ,I^k_{n_k}$ be a basis of $I\fusion[W][M^k][M^k]$. Given $\tau\in \H$, under the assumptions of Theorem~\ref{thm:conv}, for each intertwining operator $I^k_j$, we have an associated one-point conformal block on torus $E_\tau$ \eqref{eq:defofvarphitau}:
		\begin{equation}\label{eq:varphiIjk}
			\varphi_{I^k_j}(\tau)=\braket*{\varphi_{I^k_j}(\tau)}{\cdot}: 	[W]_{(E_\tau, \mpt{p},z)}\ra \C,\quad u\mapsto \tr|_{M^k} o_{I^k_j}(u) q^{L(0)-\frac{c}{24}}.
		\end{equation}
		Furthermore, $\varphi_{I_j^k}\in \Gamma(\H,\mathscr{C}_\H(W))$ are horizontal sections of the conformal block bundle $\mathscr{C}_\H(W)$, see Corollary~\ref{coro:horizontalsection}. 
		
		\begin{theorem}\label{thm:basiscfb}
			Assume there exists a finite-dimensional graded subspace $U\ssq W$ such that $W=U\op C_2(W)$, and $U\ssq \sum_{i\in \Lambda} \mathcal{U}(\mathrm{Vir}_\om).v^i$, where each $v^i\in W$ is a $\mathrm{Vir}_\om=\spn\{L(n),C: n\in \Z\}$ highest-weight vector.

			Then for each $\tau\in \H$, with $(E_\tau,\mpt{p})=\pi (\tau)\in \mathscr{M}_{1,1}$ and a choice of local coordinate $z$ around $\mpt{p}$, the set $\{\varphi_{I^k_j}(\tau): 0\leq k\leq N, 1\leq j\leq n_k \}$ form a basis of the vector space of one-point conformal blocks on torus $\mathscr{C}(E_\tau,\mpt{p},z,W)$. 
		\end{theorem}
		\begin{proof}
			We divide our proof into three steps. 
			
			\noindent
			{\bf Step 1.}
			We first show that the restriction of trace functions onto the top degrees $M^k(0)$ of irreducible modules $M^k$ or in other words the leading terms in the series $\varphi_{I_j^k}(\tau)$ \eqref{eq:varphiIjk}: $$\tr|_{M^k(0)}o_{I^k_j}(\cdot):W\ra \C,\quad  0\leq k\leq N, 1\leq j\leq n_k$$
			are linearly independent as elements in $W^\ast$. 
			
			Indeed, since $V$ is strongly rational, the fusion rules theorem holds \cite{FZ92,Li99,Liu23,GLZ25,Liu26}: 
			\begin{equation}\label{eq:fusionrulesthm}
				I\fusion[W][M^j][M^k]\cong \left(M^k(0)^\ast\otimes_{A(V)}A(W)\otimes _{A(V)}M^j(0)\right)^\ast\cong \Hom_{A(V)}(A(W)\otimes_{A(V)}M^j(0),M^k(0)),
			\end{equation}
			where $A(W)=W/O(W)$ is an $A(V)$-bimodule. Moreover, $I\in 	I\fusion[W][M^j][M^k]$ corresponds to $f_I\in \Hom_{A(V)}(A(W)\otimes_{A(V)}M^j(0),M^k(0))$ under the isomorphism \eqref{eq:fusionrulesthm}, which is uniquely determined by $o_I(\cdot): A(W)\ra \Hom_\C(M^j(0),M^k(0))$, with respect to the following formula: 
			\begin{equation}\label{eq:fusionrulesfI}
				\braket*{(v^k)'}{f_I([w]\otimes v^j)}=\braket*{(v^k)'}{o_I(w)v^j},\quad w\in W,\ (v^k)'\in M^k(0)^\ast,\ v^j\in M^j(0),
			\end{equation}
			where $o_I(\cdot)$ is given by \eqref{eq:oI}. 
			
			Note that  $A(V)$ is a semisimple algebra \cite{Zhu96,DLM98}. Hence the category of $A(V)$-bimodules is also semisimple, with simple objects given by 
			\[
			M^k(0)\otimes_\C M^j(0)^\ast,\quad 0\leq k,j\leq N.
			\]
			Note that the hom-space $ \Hom_\C(M^j(0),M^k(0))$ is naturally a simple $A(V)$-bimodule, which is isomorphic to $	M^k(0)\otimes_\C M^j(0)^\ast$. 
			
			As a semisimple $A(V)$-bimodule, $A(W)$ has the following direct sum decomposition by \eqref{eq:fusionrulesthm}:  
			\begin{equation}\label{decAW}
				A(W)\cong \bigoplus_{j,k=0}^N N_{WM^j}^{M^k} M^k(0)\otimes_\C  M^j(0)^\ast\cong \bigoplus_{j,k=0}^N N_{WM^j}^{M^k} \Hom_\C(M^j(0),M^k(0)).
			\end{equation}
			Given a nonzero $I\in I\fusion[W][M^k][M^k]$, the corresponding map 
			$o_I: A(W)\ra \End_\C (M^k(0))$
			is an $A(V)$-bimodule homomorphism since $o_I([a]\ast [v])=o([a]) o_I([v])$ and $o_I([v]\ast [a])=o_I([v])o([a])$ \cite{FZ92,Liu23}. 
			Moreover, in view of \eqref{eq:fusionrulesfI}, we can rewrite \eqref{eq:fusionrulesthm} as
			\[
			I\fusion[W][M^k][M^k]\cong \Hom_{A(V)-A(V)}\left(A(W),\End_{\C}(M^k(0))\right),\quad I\mapsto o_I(\cdot),
			\]			
			Now the basis $\{I^k_j :1\leq j\leq n_k\}$ of the vector space $I\fusion[W][M^k][M^k]$ for $0\leq k\leq N$ corresponds to a basis $\{o_{I^k_j}(\cdot): 1\leq j\leq n_k \}$ of the Hom-space of $A(V)$-bimodules. 
			
			Since the category of $A(V)$-bimodules is equivalent to the category of left modules over the semisimple algebra $A(V)^e=A(V)\otimes_\C A(V)^{\mathrm{op}}$, we may apply Lemma~\ref{lm:ssrep} to $M=A(W)$ and $S=\End_\C(M^k(0))$, and conclude that  $o^k=\oplus_{j=1}^{n_k}o_{I^k_j}:A(W)\ra  N_{WM^k}^{M^k} \End_\C(M^k(0))$ is an epimorphism of $A(V)$-bimodules. Moreover, by Schur's Lemma, $o^k$ vanishes on every simple bimodule summand of $A(W)$ not isomorphic to  $ \End_\C(M^k(0))$ in the decomposition~\eqref{decAW}. Thus, the direct sum of $o^k$ is a surjective linear map: 
			\[
			o=\oplus_{k=0}^N\oplus_{j=1}^{n_k} o_{I^k_j}: A(W)\ra \bigoplus_{k=0}^N  N_{WM^k}^{M^k} \End_\C (M^k(0)).
			\]
			
			Now given a fixed pair $(k,j)$, there exists $[u]\in A(W)$ or $u\in W$, such that 
			\[
			o_{I^k_j}(u)=\Id_{M^k(0)},\quad o_{I^{k'}_{j'}}(u)=0,\quad \mathrm{for}\quad (k',j')\neq (k,j). 
			\]
			Then $\tr|_{M^k(0)}o_{I^k_j}(u)=\dim M^k(0)\neq 0$ and $\tr|_{M^k(0)}o_{I^{k'}_{j'}}(u)=0$ for $(k',j')\neq (k,j)$. 
			It follows that 
			$\{\tr|_{M^k(0)}o_{I^k_j}(\cdot): 0\leq k\leq N,\ 1\leq j\leq n_k\}$
			are linearly independent in $W^\ast$. 
			
			\noindent
			{\bf Step 2.} We show that there exists an open subset $O\subset \H$ such that for any $\tau\in O$, the set $\{\varphi_{I^k_j}(\tau): 0\leq k\leq N, 1\leq j\leq n_k \}$ are linearly independent in $	\mathscr{C}(E_\tau,\mpt{p},z,W)$. 
			
			Indeed, let $K=\sum_{i=0}^N n_i=\#\{(k,j):0\leq k\leq N, 1\leq j\leq n_k\}$. Impose a calligraphic order on this index set.  Since $\{\tr|_{M^k(0)}o_{I^k_j}(\cdot):W\ra \C\}$ are linearly  independent, there exists $u^{(k,j)}\in W$, for each $(k,j)$ in the index set, such that the matrix $$A(0)=\left(\tr|_{M^k(0)}o_{I^k_j}(u^{(k,j)})\right)_{K\times K}$$
			is invertible. Formulate a matrix in the variable $q\in \D$ as follows: 
			\[
			A(q)=\left( \braket*{q^{\frac{c}{24}-h_{M^k}}\cdot \varphi_{I_j^k}(\tau)}{u^{(k,j)}}\right)_{K\times K}=\left(\sum_{n=0}^\infty \tr|_{M^k(n)} o_{I^k_j}(u^{(k,j)}) q^n\right)_{K\times K}.
			\]
			By Theorem~\ref{thm:conv}, $A(q)$ is well-defined for any $q\in \D$. Since each entry of $A(q)$ is  holomorphic  in $q$, the determinant $\det(A(\cdot)):\D\ra \C$ is a holomorphic function such that $\det(A(0))\neq 0$. Thus, there exists a small open subset $O'\subset \D$ near $q=0$ such that $\det(A(q))\neq 0$  for all $q\in O'$.
			
			Let $O\subset \H$ be the preimage of the open set $O'\bs\{0\}$ under $\H\ra  \D^\times, \tau\mapsto q=e^{2\pi i\tau}$. Then $\det(A(q))\neq 0$ for all $\tau\in O$. Fix a $\tau\in O$, and consider the relation $\sum c_{(k,j)}(q^{\frac{c}{24}-h_{M^k}}\cdot  \varphi_{I_j^k}(\tau))=0$ in $	\mathscr{C}(E_\tau,\mpt{p},z,W)$, where $c_{(k,j)}\in \C$. Applying this equation to every $u^{(k',j')}\in W$, we have 
			\[
			\sum c_{(k,j)} \braket*{q^{\frac{c}{24}-h_{M^k}}\cdot \varphi_{I_j^k}(\tau)}{u^{(k',j')}}=0,\quad 0\leq k'\leq N,1\leq j'\leq n_{k'}.
			\]
			In particular, $A(q) \vec{c}=0$, where $\vec{c}=[c_{(0,1)},\ds ,c_{(N,n_N)}]^T$. Hence $c_{(k,j)}=0$ for all $k,j$, and so $\{ q^{\frac{c}{24}-h_{M^k}}\cdot  \varphi_{I_j^k}(\tau): 0\leq k\leq N, 1\leq j\leq n_k\}$ are linearly independent. Furthermore, given $\tau\in \O$, since $q^{\frac{c}{24}-h_{M^k}}$ are nonzero scalars for all $k$, the vectors $\{\varphi_{I^k_j}(\tau): 0\leq k\leq N, 1\leq j\leq n_k \}$ are linearly independent in  $	\mathscr{C}(E_\tau,\mpt{p},z,W)$.

			\noindent
			{\bf Step 3.} Now we show that for any $\tau\in \H$,  the set $\{\varphi_{I^k_j}(\tau): 0\leq k\leq N, 1\leq j\leq n_k \}$ forms a basis of the vector space $\mathscr{C}(E_\tau,\mpt{p},z,W)$. 
			
			By Corollary~\ref{coro:horizontalsection}, the sections $\{\varphi_{I^k_j}: 0\leq k\leq N, 1\leq j\leq n_k \}\subset \Gamma(\H,\mathscr{C}_\H(W))$ are horizontal with respect to the connection $\nabla$ \eqref{eq:defofconnectionvarphi}. Given $\tau_0\in O\ssq \H$, $\{\varphi_{I^k_j}(\tau_0): 0\leq k\leq N, 1\leq j\leq n_k \}$  are linearly independent in the fiber $\mathscr{C}(E_{\tau_0},\mpt{p},z,W)$.
			Since $\H$ is a smooth connected complex manifold, by Lemma~\ref{lm:parallel}, $\{\varphi_{I^k_j}(\tau): 0\leq k\leq N, 1\leq j\leq n_k \}$ are linearly independent in $\mathscr{C}(E_\tau,\mpt{p},z,W)$ for any $\tau\in \H$.

			Consider the following factorization, see Definition~\ref{def:fact}: 
			$$	\begin{tikzpicture}[dot/.style={circle,draw,fill,inner sep=1pt},decoration=snake]
				\coordinate (A) at (-0.5,0);
				\draw (A) ellipse (1.5 and 0.75);
				\node[dot,label=$\mpt{p}$] at (-1.4,0) {};
				\node[label=below:{$(E_\tau,\mpt{p})$}] at ($(A)+(0,-1.1)$) {};
				\draw[bend left=45] (-1,-0.23) to (0,-0.23); \draw[bend right=45] (-1.1,-0.15) to (0.1,-0.15); 
				\path[draw,decorate,->] (1.05,0) -- (2.4,0);
				\coordinate(B) at (4,0);
				\draw (B) ellipse (1.5 and 0.75);
				\node[dot,label=$\mpt{p}$] at (3.1,0) {};
				\node[label=below:{$(X,\mpt{p})$}] at ($(B)+(-0.2,-1.1)$) {};
				\draw[bend left=45] (3.3,-0.46) to (4.7,-0.46); \draw(B)[bend right=60] (3.2,-0.34) to coordinate (mB) (4.8,-0.34); 
				\node[dot,label=$\mpt{q}$] at (mB) {};
				\path[draw,<-] (5.6,0) -- (6.4,0);
				\coordinate (C) at (7.5,0);
				\draw (C) circle (1.1);
				\node[dot,label=$\mpt{p}$] at ($(C)+(-0.2,0.4)$) {};
				\node[dot,label=below:$\mpt{q}_-$] at ($(C)+(0.6,-0.3)$) {};
				\node[dot,label=below:$\mpt{q}_+$] at ($(C)+(-0.5,-0.3)$) {};
				\node[label=below:{$(\widetilde{X},\mpt{p},\mpt{q}_+,\mpt{q}_-)$}] at ($(C)+(0,-1)$) {};
			\end{tikzpicture}$$
			where $(X,\mpt{p})$ is a nodal one-point elliptic curve, which corresponds to the boundary point of $\overline{\mathscr{M}}_{1,1}$; $(\widetilde{X},\mpt{p},\mpt{q}_+,\mpt{q}_-)$ is the normalization of $X$ at the node $\mpt{p}$, which is isomorphic to the three-point smooth genus zero curve $(\PP^1,\infty,1,0)$. 
			
			Apply the \nameref{lem:VB} to the VOA $(V,Y[\cdot, z], \vac,\widetilde{\om})$ and the irreducible module $(W,Y_W[\cdot,z])$, noting that $(V,Y[\cdot, z], \vac,\widetilde{\om})$ is also strongly rational since it is isomorphic to $(V,Y(\cdot,z), \vac,\om)$,  the sheaf of conformal blocks $\mathscr{C}(W)$ is a vector bundle on $\overline{\mathscr{M}}_{1,1}$. Thus, 
			the dimension of the fiber $\mathscr{C}(E_\tau,\mpt{p},z,W)$ at $(E_\tau,\mpt{p})$ for any $\tau\in \H$ is the same as the dimension of the fiber $\mathscr{C}(X,\mpt{p},z,W)$ at $(X,\mpt{p})$. 
			By the \nameref{lem:FT} and Proposition~\ref{prop:threepointcfbIO}, we have the following isomorphism of vector spaces 
			\begin{equation}\label{5.1}
				\mathscr{C}(X,\mpt{p},z,W)\cong \bigoplus_{k=0}^N\mathscr{C}((\widetilde{X},\mpt{q}_+,\mpt{p},\mpt{q}_-), (M^k)'\otimes  W\otimes  M^k)\cong \bigoplus_{k=0}^N I\fusion[W][M^k][M^k].
			\end{equation}
			Therefore, we have the dimension equality:
			\[
			\dim 	\mathscr{C}(E_\tau,\mpt{p},z,W)=	\dim 	\mathscr{C}(X,\mpt{p},z,W)=\sum_{k=0}^N N\fusion[W][M^k][M^k]=\sum_{k=0}^N n_k.
			\]
			Hence the linearly independent subset $\{\varphi_{I^k_j}(\tau): 0\leq k\leq N, 1\leq j\leq n_k \}$ forms a basis of the vector space $\mathscr{C}(E_\tau,\mpt{p},z,W)$ for any $\tau\in \H$. 
		\end{proof}
		
		The following sheaf-theoretical statement is an immediate consequence of 		Theorem~\ref{thm:basiscfb}. 
		
		\begin{corollary}\label{coro:frame}
			Under the assumptions of Theorem~\ref{thm:basiscfb}, 
			the horizontal sections $\{\varphi_{I^k_j}: 0\leq k\leq N, 1\leq j\leq n_k \}\subset \Gamma(\H,\mathscr{C}_\H(W))$ form a global frame of the vector bundle $\mathscr{C}_\H(W)$. i.e., the canonical morphism 
			\[
			\Phi: \bigoplus_{k=0}^N\bigoplus_{j=1}^{n_k} \O_\H \ra \mathscr{C}_\H(W),\quad e^{(k,j)}\mapsto \varphi_{I^k_j}
			\] 
			is an isomorphism of vector bundles. 
		\end{corollary}
		\begin{proof}
			Since $\{ \varphi_{I^k_j}(\tau)\}$ forms a basis of the fiber $ \mathscr{C}_\H(W)_\tau\cong \mathscr{C}(E_\tau,\mpt{p},z,W)$,  $\Phi$ is a morphism between vector bundles of the same rank which induces an isomorphism on every fiber. Hence it is an isomorphism of vector bundles \cite{Har77,GH78}.
		\end{proof}

		\subsection{Action of $\SL(2,\Z)$ with multipliers and the generalized modular invariance}
		Now we construct a projective action with multiplier of the full modular group $\mathrm{SL}(2,\Z)$ on the space of global sections $\Gamma(\H,\mathscr{C}_\H(W))$ and show that it is compatible with the connection. 
		This will lead to our desired modular invariance of trace functions associated to intertwining operators. 
		
		Given $\ga=\footnotesize{\begin{pmatrix}
				a&b\\c&d
		\end{pmatrix}}\in \SL(2,\Z)$, we have an isomorphism of pointed elliptic curves, see \cite{ST92}: 
		\[
		(E_\tau,\mpt{p})\ra (E_{\ga\tau},\ga\mpt{p}),\quad [z]\mapsto \left[\frac{z}{c\tau+d} \right],
		\]
		where $\frac{z}{c\tau +d}$ is the induced local coordinate on $E_{\ga\tau}$ around $\ga\mpt{p}$. 
		\begin{definition}\label{def:action}
			Let  $\varphi\in \Gamma(\H,\mathscr{C}_\H(W))$, we define a section  $\ga^{-1}.\varphi:\H\ra \widetilde{W^\ast}$ of the trivial vector bundle $\widetilde{W^\ast}$ on $\H$ as follows:
			\begin{equation}\label{eq:modulargpacion}
				\begin{aligned}
					\braket*{(\ga^{-1}.\varphi)(\tau)}{v}:&=\braket*{\varphi(\ga\tau)}{(c\tau+d)^{-L[0]}v},\\ &=(c\tau+d)^{-h_W}\braket*{\varphi(\ga\tau)}{(c\tau+d)^{-L[0]+h_W}v},
					\quad\ \tau\in \H,\ v\in W,
				\end{aligned}
			\end{equation}
			where $\varphi(\ga\tau)\in \mathscr{C}(E_{\ga \tau},\ga\mpt{p},\frac{z}{c\tau+d},W)$ and $(c\tau+d)^{-L[0]+h_W}v=(c\tau+d)^{-n}v$ for $v\in W[n]$.

			Note that the conformal weight $h_W$ is a rational number in general \cite{DLM00}. We make the following convention for the rational power $(c\tau+d)^{-h_W}$:  
			If $c\neq 0$ in $\ga$, then $(c\tau+d)$ is not a negative real number, and we use the principal branch of logarithm for $\log(c\tau+d)$, with the branch cut given by the negative $x$-axis; if $c=0$ and $d=-1$, we assume $(-1)^{-h_W}=e^{-\pi i h_W }$. 
		\end{definition}

		\begin{remark}\label{remark:action}
			Equivalently, we have an alternative formula for \eqref{eq:modulargpacion} using $\ga^{-1}=\footnotesize{\begin{pmatrix}
					d&-b\\-c&a
			\end{pmatrix}}$: 
			\begin{align*}
				\braket*{(\ga.\varphi)(\tau)}{v}=\braket*{\varphi(\ga^{-1}\tau)}{(-c\tau+a)^{-L[0]}v}.
			\end{align*}
			If we replace $\tau$ by $\ga \tau$, then by $-c(\ga\tau)+a=\frac{1}{c\tau+d}$, we can also write
			\[\braket*{(\ga.\varphi)(\ga\tau)}{v}=\braket*{\varphi(\tau)}{(c\tau+d)^{L[0]}v}.\]
			The action $\ga^{-1}.\varphi$ can be viewed as  a generalization of the slash action $\varphi|_{\ga}$ \cite{S73,ST92}. Since the slash operator is conventionally a right action of \(\SL(2,\mathbb Z)\), the inverse $\gamma^{-1}$ appears when the action is written on the left. We will show in Proposition~\ref{prop:sectionproperty} that \eqref{eq:modulargpacion} is consistent with the group law. 
		\end{remark}

		\begin{lemma}\label{lm:welldefofgaact}
			$(\ga^{-1}.\varphi)(\tau)$ is a well-defined element in $ \mathscr{C}(E_\tau,\mpt{p},z,W)$. In other words, 
			\[	\braket*{(\ga^{-1}.\varphi)(\tau)}{R_1(a)v}=0\quad \mathrm{and}\quad 	\braket*{(\ga^{-1}.\varphi)(\tau)}{R_{\wp_m(z,\tau)}(a)v}=0, \]
			for all $a\in V$ and $m\geq 2$, 
			see \eqref{eq:Rf} and \eqref{eq:concfb}. In particular, we have a well-defined global section $\ga^{-1}.\varphi\in \Gamma(\H,\mathscr{C}_\H(W))$. 
		\end{lemma}
		\begin{proof}
			Using the $x^{L(0)}$-conjugation property \cite[(5.4.22)]{FHL93} and \eqref{eq:modulargpacion}, we have 
			\begin{align*}
				&\braket*{(\ga^{-1}.\varphi)(\tau)}{R_{\wp_m(z,\tau)}(a)v}=\braket*{\varphi(\ga\tau)}{(c\tau+d)^{-L[0]}.R_{\wp_m(z,\tau)}(a)v}\\
				&=\braket*{\varphi(\ga\tau)}{\Res_z (c\tau+d)^{-L[0]}Y_W[a,z](c\tau+d)^{L[0]}(c\tau+d)^{-L[0]}.v}\iota_z(\wp_m(z,\tau))\\
				&=\braket*{\varphi(\ga\tau)}{\Res_z Y_W\left[(c\tau+d)^{-L[0]}a,\frac{z}{c\tau+d}\right](c\tau+d)^{-L[0]}.v}\iota_z(\wp_m(z,\tau))\\
				&=\braket*{\varphi(\ga\tau)}{\Res_{u=0}(c\tau+d) Y_W\left[(c\tau+d)^{-L[0]}a,u\right](c\tau+d)^{-L[0]}.v}\iota_{u}(\wp_m((c\tau+d)u,\tau)) ,
			\end{align*}
			where $u=\frac{z}{c\tau+d}$. 
			Clearly, the function $f(u)=\wp_m((c\tau+d)u,\tau)$ is meromorphic in $u$ with only possible pole at $u=0$. We claim that it is doubly periodic in $u$ with periods $1$ and $\ga \tau$. In other words, it is in $H^0(E_{\ga\tau}\bs \ga\mpt{p}, \O_{E_{\ga\tau}})$.
			Indeed, since $c\tau+d, a\tau+b\in \Lambda_\tau=\Z+\Z\tau$ and $\wp_m(z,\tau)$ has periods $1$ and $\tau$, we have 
			\begin{align*}
				&f(u+1)=\wp_m((c\tau+d)u+(c\tau+d),\tau)=\wp_m((c\tau+d)u,\tau)=f(u),\\
				&f(u+\gamma\tau)=\wp_m((c\tau+d)\left(u+\frac{a\tau+b}{c\tau+d}\right),\tau)=\wp_m((c\tau+d)u+(a\tau+b),\tau)=f(u). 
			\end{align*}
			Thus, there exists $g_1(u),\ds,g_n(u)\in H^0(E_{\ga\tau}\bs \ga\mpt{p}, \O_{E_{\ga\tau}})$ such that 
			\[
			\braket*{(\ga^{-1}.\varphi)(\tau)}{R_{\wp_m(z,\tau)}(a)v}=\sum_{i=1}^n \braket*{\varphi(\ga\tau)}{R_{g_i(u)}(a')v'}=0,
			\]
			where $a'=(c\tau+d)^{1-L[0]}a$ and $v'=(c\tau+d)^{-L[0]}v$. Similarly, 
			\[
			\braket*{(\ga^{-1}.\varphi)(\tau)}{R_{\wp_m(z,\tau)}(a)v}= \braket*{\varphi(\ga\tau)}{R_{1}(a')v'}=0.
			\]
			This shows $(\ga^{-1}.\varphi)(\tau)\in \mathscr{C}(E_\tau,\mpt{p},z,W)$. 
		\end{proof}
		
		\begin{lemma}
			The linear operator $T(\tau):W\ra W$ \eqref{eq:defTq} has the following covariance property with respect to the action of $\ga=\footnotesize{\begin{pmatrix}
					a&b\\c&d
			\end{pmatrix}}\in \SL(2,\Z)$: 
			\begin{equation}\label{eq:Ttauconj}
				(c\tau+d)^{-L[0]} T(\tau)=(c\tau+d)^{-2} T(\ga\tau) (c\tau+d)^{-L[0]} +(2\pi i)^2 \xi_\tau ((c\tau+d)^{-L[0]}),
			\end{equation}
			where $\xi_\tau=\frac{1}{2\pi i}\frac{d}{d\tau}$. 
		\end{lemma}
		\begin{proof}
			Note that $ \xi_\tau ((c\tau+d)^{-L[0]})=-\frac{1}{2\pi i}L[0]c (c\tau+d)^{-L[0]-1}$. 
			Then by the quasi-modularity of $\wp_1(z,\tau)$ \eqref{eq:wp1modular}, together with the facts that $L[0]\widetilde{\om}=2\widetilde{\om}$ and $\widetilde{\om}[1]=L[0]$, we have
			\begin{align*}
				&(c\tau+d)^{-L[0]} T(\tau)=\Res_z (c\tau+d)^{-L[0]} Y_{W}[\widetilde{\om},z] (c\tau+d)^{L[0]}(c\tau+d)^{-L[0]} \iota_z(\wp_1(z,\tau))\\
				&=\Res_zY_W\left[(c\tau+d)^{-L[0]}.\widetilde{\om},\frac{z}{c\tau+d}\right] (c\tau+d)^{-L[0]-1}\left(\wp_1\left(\frac{z}{c\tau+d},\ga\tau\right)-2\pi i cz. \right) \\
				&=\Res_u(c\tau+d)^{-2} Y_W[\widetilde{\om},u] \wp_1(u,\ga\tau) (c\tau+d)^{-L[0]}\\
				&\ \ -\Res_u(c\tau+d)^{-2} Y_W[\widetilde{\om},u](c\tau+d)^{-L[0]} (2\pi ic)\cdot  (c\tau+d)u\\
				&= (c\tau+d)^{-2} T(\ga\tau) (c\tau+d)^{-L[0]} - \widetilde{\om}[1](c\tau+d)^{-L[0]-1} (2\pi i c) \\
				&=(c\tau+d)^{-2} T(\ga\tau) (c\tau+d)^{-L[0]} +(2\pi i)^2 \xi_\tau ((c\tau+d)^{-L[0]}),
			\end{align*}
			where we take a coordinate change $u=\frac{z}{c\tau+d}$ in the third equality. 
		\end{proof}
		
		\begin{lemma}\label{lm:multiplier}
			For any $\ga=\footnotesize{\begin{pmatrix}
					a&b\\c&d
			\end{pmatrix}}\in \SL(2,\Z)$ and $\tau\in \H$, write $j(\ga,\tau)=(c\tau+d)$. With the convention of $\log(j(\ga,\tau))$ in Definition~\ref{def:action}, define 
			\begin{equation}\label{eq:multiplier}
				\mu_W: \SL(2,\Z)\times \SL(2,\Z)\ra \mu_N,\quad \mu_W(\theta,\ga):=\frac{j(\theta,\ga\tau)^{-h_W}j(\ga,\tau)^{-h_W}}{j(\theta\ga,\tau)^{-h_W}},
			\end{equation}
			where $h_W\in \Z/N$ for some $N\in \Z_{>0}$, and $\mu_N$ is the group of $N$-th roots of unities. 
			Then $\mu_W$ is independent of the choice of $\tau\in \H$ and satisfies the cocycle condition:
			\[\mu_W(\eta,\theta\gamma)\mu_W(\theta,\gamma)
			=\mu_W(\eta\theta,\gamma)\mu_W(\eta,\theta),
			\]
			for any $\eta,\theta,\ga\in \SL(2,\Z)$
		\end{lemma}
		\begin{proof}
			It is standard in the theory of modular form that $j(\theta\ga,\tau)=j(\theta,\ga\tau)j(\ga,\tau)$ \cite{S73}. For fixed $\ga,\theta\in \SL(2,\Z)$ and $\tau\in \H$, with our convention for logarithm, there exists a unique $m=m(\theta,\ga,\tau)\in \Z$ such that
			\[
			\log(j(\theta\ga,\tau))=\log(j(\theta,\ga\tau)j(\ga,\tau))=\log(j(\theta,\ga\tau))+\log(j(\ga,\tau))+2\pi i \cdot m(\theta,\ga,\tau).
			\]
			Let $F(\tau)=m(\theta,\ga,\tau)$. Then $F:\H\ra \Z$ is a continuous function in $\tau\in \H$ for fixed $\theta,\ga$. Since $\H$ is connected, $F(\tau)$ must be a constant. Hence $m$ does not depend on $\tau$.  
			It follows that  $j(\theta\ga,\tau)^{-h_W}=\exp(-h_W\log(j(\theta\ga,\tau)))=j(\theta,\ga\tau)^{-h_W}j(\ga,\tau)^{-h_W} \exp(-h_W(2\pi im))$, and so 
			\[
			\mu_W(\theta,\ga)=\exp(h_W(2\pi i\cdot m(\theta,\ga)))=\frac{j(\theta,\ga\tau)^{-h_W}j(\ga,\tau)^{-h_W}}{j(\theta\ga,\tau)^{-h_W}}
			\]
			has a unique value in $\mu_N$. The cocycle condition is easy to verify. 
		\end{proof}

		\begin{proposition}\label{prop:sectionproperty}
			The formula \eqref{eq:modulargpacion} induces a projective action of $\SL(2,\Z)$ with the multiplier $\mu_W$ \eqref{eq:multiplier} on the space of global sections: 
			\begin{equation}\label{eq:projaction}
				\begin{aligned}
					\SL(2,\Z)\times \Gamma(\H,\mathscr{C}_\H(W))&\to  \Gamma(\H, \mathscr{C}_\H(W)),\quad (\ga^{-1},\varphi)\mapsto \ga^{-1}.\varphi,\\
					\gamma^{-1}\cdot(\theta^{-1}\cdot\varphi)&=
					\mu_W(\theta,\gamma)\left((\theta\gamma)^{-1}\cdot\varphi\right).
				\end{aligned}
			\end{equation}
			In particular, if $h_W\in\Z$, then $\mu_W\equiv 1$, and the above projective action becomes an honest left action of $\SL(2,\Z)$. 
			
			Moreover, the projective action is compatible with the connection $\nabla$ \eqref{eq:defofconnectionvarphi}: 
			\begin{equation}\label{eq:connectioncompat}
				\nabla_{\xi_\tau}(\ga^{-1}.\varphi)=(c\tau+d)^{-2} \left(\ga^{-1}.\nabla_{\xi_{\ga\tau}}(\varphi)\right)=\ga^{-1}.\nabla_{\ga_{\ast}\xi_\tau}(\varphi),
			\end{equation}
			where $\varphi\in \Gamma(\H,\mathscr{C}_\H(W))$, $\xi_\tau=\frac{1}{2\pi i}\frac{d}{d\tau}, \xi_{\ga\tau}=\frac{1}{2\pi i} \frac{d}{d(\ga\tau)}\in \Gamma(\H,\mathscr{T}_\H)$, and $\ga_\ast: \mathscr{T}_\H\ra  \mathscr{T}_\H$ is the pushforward isomorphism induced by $\ga:\H\simeq \H$, satisfying $\ga_\ast(\xi_\tau)=(c\tau+d)^{-2} \xi_{\ga\tau}$. 
			
			Consequently, if $\varphi\in  \Gamma(\H,\mathscr{C}_\H(W))$ is a horizontal section, then $\ga^{-1}.\varphi$ is also a horizontal section for all $\ga\in \SL(2,\Z)$. 
		\end{proposition}
		\begin{proof}
			By Lemma~\ref{lm:welldefofgaact}, we have $\ga^{-1}.\varphi\in \Gamma(\H, \mathscr{C}_\H(W))$. Let $\ga=\footnotesize{\begin{pmatrix}
					a&b\\c&d
			\end{pmatrix}}$ and $\theta=\footnotesize{\begin{pmatrix}
					a'&b'\\c'&d'
			\end{pmatrix}}$ be elements in $\SL(2,\Z)$. Then by   \eqref{eq:modulargpacion} and \eqref{eq:multiplier}, for any $\tau\in \H$ and $v\in W$, we have 
			\begin{align*}
				&	\braket*{\left(\ga^{-1}(\theta^{-1}.\varphi)\right)(\tau)}{v}=j(\ga,\tau)^{-h_W}\braket*{(\theta^{-1}.\varphi)(\ga\tau)}{(c\tau+d)^{-L[0]+h_W}v}\\
				&=j(\ga,\tau)^{-h_W}j(\theta,\ga\tau)^{-h_W}\braket*{\varphi(\theta(\ga\tau))}{\left(c'\left(\frac{a\tau+b}{c\tau+d}\right)+d'\right)^{-L[0]+h_W}(c\tau+d)^{-L[0]+h_W}v}\\
				&=\mu_W(\theta,\ga)j(\theta\ga,\tau)^{-h_W}\braket*{\varphi((\theta\ga)\tau)}{\left((c'a+d'c)\tau+(c'b+d'd)\right)^{-L[0]+h_W}v}\\
				&=\mu_W(\theta,\ga)\braket*{\left((\theta\ga)^{-1}.\varphi\right)(\tau)}{v}.
			\end{align*}
			Hence $\ga^{-1}.(\theta^{-1}.\varphi)=\mu_W(\theta,\ga)\left((\theta\ga)^{-1}.\varphi\right)=\mu_W(\theta,\ga)\left((\ga^{-1}\theta^{-1}).\varphi\right)$ for all $\ga,\theta\in \SL(2,\Z)$. It is easy to see that $I_2.\varphi=\varphi$.
			Thus, the projective action \eqref{eq:projaction} of $\SL(2,\Z)$ is well-defined. 
			
			It follows from the definition of connection \eqref{eq:defofconnectionvarphi}, the covariance formula \eqref{eq:Ttauconj}, the chain rule of $\xi_\tau$, and  $\xi_\tau(\ga\tau)=\frac{1}{2\pi i} (c\tau+d)^{-2} $ that
			\begin{align*}
				&\braket*{\nabla_{\xi_{\tau}}(\ga^{-1}.\varphi)(\tau)}{v}\\
				&=\xi_{\tau}\braket*{(\ga^{-1}.\varphi)(\tau)}{v}-\frac{1}{(2\pi i)^2} \braket*{(\ga^{-1}.\varphi)(\tau)}{T(\tau)v}\\
				&=\xi_{\tau}\braket*{\varphi(\ga\tau)}{(c\tau+d)^{-L[0]} v}-\frac{1}{(2\pi i)^2} \braket*{\varphi(\ga\tau)}{(c\tau+d)^{-L[0]}.T(\tau)v}\\
				&=\frac{1}{(c\tau+d)^2} \frac{1}{2\pi i} \braket*{\frac{d}{d(\ga\tau)}\varphi(\ga\tau)}{(c\tau+d)^{-L[0]} v}+\braket*{\varphi(\ga\tau)}{\xi_{\tau}((c\tau+d)^{-L[0]}) v}\\
				&\ \ -\frac{1}{(2\pi i)^2}\braket*{\varphi(\ga\tau)}{(c\tau+d)^{-2}T(\ga\tau)(c\tau+d)^{-L[0]} v}-\braket*{\varphi(\ga\tau)}{\xi_{\tau}((c\tau+d)^{-L[0]}) v}\\
				&=(c\tau+d)^{-2} \left(\xi_{\ga\tau}\braket*{\varphi(\ga\tau)}{(c\tau+d)^{-L[0]} v}-\frac{1}{(2\pi i)^2}\braket*{\varphi(\ga\tau)}{T(\ga\tau)(c\tau+d)^{-L[0]} v}\right)\\
				&=(c\tau+d)^{-2} \braket*{(\nabla_{\xi_{\ga\tau}}(\varphi))(\ga\tau)}{(c\tau+d)^{-L[0]} v}\\
				&=(c\tau+d)^{-2} \braket*{\left(\ga^{-1}.\nabla_{\xi_{\ga\tau}}(\varphi)\right)(\tau)}{v},
			\end{align*}
			for any $\tau\in \H$ and $v\in W.$ This shows \eqref{eq:connectioncompat}. 
			
			Finally, assume the section $\varphi$ is horizontal. Then $\nabla_{\ga_\ast\xi_\tau}(\varphi)=0$ and so $\nabla_{\xi_\tau}(\ga^{-1}.\varphi)=0$ by \eqref{eq:connectioncompat}. Since $\mathscr{T}_\H=\O_\H \xi_\tau$, the section $\ga^{-1}.\varphi$ is horizontal. 
		\end{proof}
		
		Now we can finally prove our main theorem of the generalized modular invariance. Recall that for $k=0,1,\ds ,N$, we let $I^k_1,\ds ,I^k_{n_k}$ be a basis of $I\fusion[W][M^k][M^k]$.
		\begin{theorem}\label{thm:modularinv}
			Under the assumptions of Theorem~\ref{thm:basiscfb}, for any $\ga\in \SL(2,\Z)$, there exists unique constant coefficients $\ga_{(k,j),(k',j')}\in \C$, depending on $\ga$ and the intertwining operators only, such that the following relations hold in $\Gamma(\H,\mathscr{C}_\H(W))$: 
			\begin{equation}\label{eq:5.28}
				\ga^{-1}.\varphi_{I^k_j}=\sum_{k'=0}^N\sum_{j'=1}^{n_{k'}} \ga_{(k,j),(k',j')} \cdot \varphi_{I^{k'}_{j'}},\quad 0\leq k\leq N,\ 1\leq j\leq n_k,
			\end{equation}
			where the section $\ga^{-1}.\varphi_{I^k_j}$ is defined by \eqref{eq:modulargpacion}.
			
			In particular, for any $\tau\in \H$, $v\in W$,  and $\ga=\footnotesize{\begin{pmatrix}
					a&b\\c&d
			\end{pmatrix}}\in \SL(2,\Z)$, denote
			\begin{equation}\label{eq:defZ}
				Z_{I^k_j}(v,\tau):=\braket*{\varphi_{I^k_j}(\tau)}{v}=\tr|_{M^k}o_{I^k_j}(v)q^{L(0)-\frac{c}{24}},\quad 0\leq k\leq N,\ 1\leq j\leq n_k.
			\end{equation}
			Then  we have 
			\begin{equation}\label{eq:modularinv}
				Z_{I^k_j}\left((c\tau+d)^{-L[0]}v,\frac{a\tau+b}{c\tau+d}\right)=\sum_{k'=0}^N\sum_{j'=1}^{n_{k'}} \ga_{(k,j),(k',j')}\cdot  Z_{I^{k'}_{j'}}(v,\tau ). 
			\end{equation}
			We call \eqref{eq:modularinv} the {\bf modular invariance of trace functions associated to intertwining operators} for the strongly rational VOA $V$. 
		\end{theorem}
		\begin{proof}
			By Corollary~\ref{coro:frame}, $\{\varphi_{I^k_j}: 0\leq k\leq N, 1\leq j\leq n_k \}$ forms a global frame of $\mathscr{C}_\H(W)$. It follows that for $\ga^{-1}.\varphi_{I^k_j}\in \Gamma(\H,\mathscr{C}_\H(W))$, there exists unique holomorphic functions $\ga_{(k,j),(k',j')}(\cdot )\in \O_\H(\H)$ such that 
			\[
			(\ga^{-1}.\varphi_{I^k_j})(\tau)=\sum_{k'=0}^N\sum_{j'=1}^{n_{k'}} \ga_{(k,j),(k',j')}(\tau) \cdot \varphi_{I^{k'}_{j'}}(\tau),\quad \forall \tau\in \H.
			\]
			By Corollary~\ref{coro:horizontalsection}, the sections $\{\varphi_{I^k_j}\}$ are horizontal. It then follows from Proposition~\ref{prop:sectionproperty} that $\{\ga^{-1}.\varphi_{I^k_j}\}$ are also horizontal. By the Leibniz rule of $\nabla_{\xi_\tau}$ \eqref{eq:Leibniz}, we have 
			\begin{align*}
				0=\nabla_{\xi_\tau}(\ga^{-1}.\varphi_{I^k_j})(\tau)&=\sum_{k'=0}^N\sum_{j'=1}^{n_{k'}} \xi_\tau\left(\ga_{(k,j),(k',j')}(\tau)\right) \cdot \varphi_{I^{k'}_{j'}}(\tau)+ \ga_{(k,j),(k',j')}(\tau) \cdot \nabla_{\xi_\tau}(\varphi_{I^{k'}_{j'}})(\tau)\\
				&=\sum_{k'=0}^N\sum_{j'=1}^{n_{k'}} \xi_\tau\left(\ga_{(k,j),(k',j')}(\tau)\right) \cdot \varphi_{I^{k'}_{j'}}(\tau)+0.
			\end{align*}
			This equality holds in $\mathscr{C}(E_\tau,\mpt{p},z,W)$ for any given $\tau\in \H$. 
			By Theorem~\ref{thm:basiscfb}, $\{\varphi_{I^{k'}_{j'}}(\tau)\}$ forms a basis of this vector space. Hence $$\xi_\tau\left(\ga_{(k,j),(k',j')}(\tau)\right)=\frac{1}{2\pi i} \frac{d}{d\tau} (\ga_{(k,j),(k',j')}(\tau))=0,\quad \forall \tau\in \H.$$
			This shows $\ga_{(k,j),(k',j')}(\tau)\equiv \ga_{(k,j),(k',j')}\in \C$ are constant for all $(k,j),(k',j')$. 
			
			Finally, it follows from \eqref{eq:modulargpacion} and \eqref{eq:5.28} that
			\[
			\braket*{\varphi_{I^k_j}(\ga\tau)}{(c\tau+d)^{-L[0]}v}=\sum_{k'=0}^N\sum_{j'=1}^{n_{k'}} \ga_{(k,j),(k',j')}\cdot \braket*{\varphi_{I_{j'}^{k'}}(\tau)}{v},
			\] 
			with  $\varphi_{I}(\tau)=\tr|_Mo_I(v)q^{L(0)-\frac{c}{24}}$. This proves \eqref{eq:modularinv}, in view of \eqref{eq:defZ}. 
		\end{proof}	
		
		We can consider the special case when $W=M^0=V$. For each $k=0,1,\ds, N$, the vector space $I\fusion[V][M^k][M^k]$ is one-dimensional, with a canonical basis given by the module vertex operator $Y_{M^k}(\cdot,z)$ \cite{FHL93}. Moreover, 
		$
		o_{Y_{M^k}}(a)=a_{(\wt a-1)} =o(a)
		$ for any $a\in V$, and 
		\[
		Z_{Y_{M^k}}(a,\tau)=\tr|_{M^k}o(a) q^{L(0)-\frac{c}{24}}=Z_{k}(a,\tau), \quad 0\leq k\leq N,
		\]
		as  in \cite{MS89,Zhu96,DLM00}. Then our main Theorem~\ref{thm:modularinv} specializes to the following statement, which is \cite[Theorem 5.3.2]{Zhu96}:
		\begin{corollary}\label{coro:originalmodularinv}
			Let $V$ be a rational VOA. Assume there exists a finite-dimensional subspace $U\subset V$ such that $V=U+C_2(V)$, $V_0+V_1\ssq U$, and $U$ is contained in a sum of highest-weight modules over the Virasoro algebra defined by $\om$. 
			
			Then for any $\ga=\footnotesize{\begin{pmatrix}
					a&b\\c&d
			\end{pmatrix}}\in \SL(2,\Z)$, there exists constants $\ga_{k,\ell}\in \C$, depending on $\ga$ and the irreducible modules only, such that for any $\tau\in \H$ and $u\in V$, we have 
			\begin{equation}
				Z_{k}\left((c\tau+d)^{-L[0]}u,\frac{a\tau+b}{c\tau+d}\right)=\sum_{\ell=0}^N \ga_{k,\ell} \cdot Z_{\ell}(u,\tau),\quad 0\leq k\leq N.
			\end{equation}
		\end{corollary}


		\section{Consequences of the generalized modular invariance and questions}\label{sec:consequence}
		In this Section, we discuss some consequences of the basis theorem of one-point conformal blocks~\ref{thm:basiscfb}  and the generalized modular invariance \ref{thm:modularinv} and propose some future questions. 
		
		Throughout this section, we assume that $V$ is a strongly rational VOA and $\spW=\{V=M^0,M^1,\ds,M^N\}$ is the set of irreducible $V$-modules up to isomorphism. For simplicity, we also assume for any irreducible $V$-module $W$, there exists a finite-dimensional graded subspace $U\ssq W$ such that $W=U\op C_2(W)$, and $U\ssq \sum_{i\in \Lambda} \mathcal{U}(\mathrm{Vir}_\om).v^i$, where each $v^i\in W$ is a $\mathrm{Vir}_\om$ highest-weight vector.

		\subsection{Projective representation of $\SL(2,\Z)$ on genus-one conformal blocks}
		We can construct a representation of $\SL(2,\Z)$ on the space of conformal blocks $ \mathscr{C}(E_\tau,\mpt{p},z,W)$ using the trace functions associated to intertwining operators and their modular invariance. 
		
		Indeed, it follows from Corollary~\ref{coro:frame} that the set of trace functions 
		$
		\{
		\varphi_{I^k_j}: 0\leq k\leq N,\ 1\leq j\leq n_k
		\}\subset \Gamma(\H,\mathscr{C}_\H(W))
		$
		are $\C$-linearly independent as global sections of $\mathscr{C}_\H(W)$. Define
		\begin{equation}\label{eq:CW}
			C_W:=\spn_\C\left\{\varphi_{I}(\cdot)=\tr|_{M^k}o_I(-)e^{2\pi i(\cdot)(L(0)-\frac{c}{24})}: I\in I\fusion[W][M^k][M^k],\ 0\leq k\leq N\right\},
		\end{equation}
		which is a subspace of $\Gamma(\H,\mathscr{C}_\H(W))$. Since $\varphi_I$ is linear in $I$, the trace functions $\{\varphi_{I^k_j}\}$ form a basis of $C_W$. Furthermore, by Corollary~\ref{coro:horizontalsection}, all the sections in $C_W$ are horizontal with respect to the flat connection $\nabla$ \eqref{eq:defofconnectionvarphi}. Given a vector space $V'$, recall that $\mathrm{PGL}(V')=\mathrm{GL}(V')/\C^\times$. 
		
		\begin{proposition}
			There exists a projective representation of \(\SL(2,\mathbb Z)\) on $C_W$. More precisely, for \(g\in \SL(2,\mathbb Z)\), define an invertible linear map
			$$	\tilde\rho(g):C_W\to C_W,
			\quad 	\tilde\rho(g)(\varphi):=g.\varphi,$$
			where the action $g.\varphi$ is given by Definition~\ref{def:action}. Then the assignment
			\begin{equation}\label{eq:firstrep}
				\rho:\SL(2,\mathbb Z)\to \mathrm{PGL}(C_W),
				\qquad
				\rho(g):=[\tilde\rho(g)]
			\end{equation}
			is a group homomorphism.
			More explicitly, for $g=\gamma^{-1}, \tau\in\mathbb H$, and $v\in W$, we have
			\begin{equation}\label{eq:repSl2ZonCW}
				\braket*{	(\tilde\rho(\gamma^{-1})(\varphi))(\tau)}{v}=\braket*{	(\gamma^{-1}.\varphi)(\tau)}{v}=\braket*{	\varphi(\gamma\tau)}{(c\tau+d)^{-L[0]}v}.
			\end{equation}
			In particular, if the conformal weight $h_W\in\mathbb Z$, then $\mu_W\equiv 1$, and the projective representation $\rho$ lifts to a normal representation
			$
			\tilde\rho:\SL(2,\mathbb Z)\to \mathrm{GL}(C_W).
			$
		\end{proposition}
		\begin{proof}
			By the modular-invariance property \eqref{eq:5.28}, the vector space $C_W$ is stable under the transformations in Definition~\ref{def:action}. Thus
			$	\tilde\rho(g)(C_W)\subseteq C_W$
			for all \(g\in\SL(2,\mathbb Z)\).
			
			Let \(\gamma,\theta\in\SL(2,\mathbb Z)\). By Proposition~\ref{prop:sectionproperty}, for any $\varphi\in C_W$, we have
			$	\gamma^{-1}\cdot(\theta^{-1}\cdot\varphi)
			=
			\mu_W(\theta,\gamma)\left((\theta\gamma)^{-1}\cdot\varphi\right)$.
			Equivalently,
			$	\tilde\rho(\gamma^{-1})\tilde\rho(\theta^{-1})
			=
			\mu_W(\theta,\gamma)\tilde\rho((\theta\gamma)^{-1})$.
			Since $ \mu_W(\theta,\gamma)\in\mathbb C^\times$, we have the following relation in $\mathrm{PGL}(C_W)$: 
			\[	[\tilde\rho(\gamma^{-1})][\tilde\rho(\theta^{-1})]
			=
			[\tilde\rho((\theta\gamma)^{-1})]
			=
			[\tilde\rho(\gamma^{-1}\theta^{-1})].\]
			Hence
			$	\rho(\gamma^{-1})\rho(\theta^{-1})
			=
			\rho(\gamma^{-1}\theta^{-1}).$
			This shows that \(\rho:\SL(2,\mathbb Z)\to \mathrm{PGL}(C_W)\) is a group homomorphism.
			Finally, if $\mu_W\equiv 1$, then 
			$	\tilde\rho(\gamma^{-1})\tilde\rho(\theta^{-1})
			=
			\tilde\rho(\gamma^{-1}\theta^{-1})$,
			so $\tilde\rho$ is a normal representation of \(\SL(2,\mathbb Z)\) on $C_W$.
		\end{proof}
		
		\begin{remark}
			Note that correspondence between basis gives rise to a linear isomorphism
			\[
			\bigoplus_{k=0}^N I\fusion[W][M^k][M^k]\cong C_W,\quad I_j^k\mapsto \varphi_{I^k_j},\quad 0\leq k\leq N,\ 1\leq j\leq n_k.
			\]
			Hence the direct sum of space of intertwining operators admits a representation of $\SL(2,\Z)$. 
			In particular, for $W=M^i$ and $S=\footnotesize{\begin{pmatrix}
					0&1\\-1&0
			\end{pmatrix}}\in \SL(2,\Z)$, denote $\rho(S)=S(i)$, then 
			\begin{equation}\label{eq:Si}
				S(i): \bigoplus_{k=0}^N I\fusion[M^i][M^k][M^k]\to \bigoplus_{k=0}^N I\fusion[M^i][M^k][M^k]
			\end{equation}
			is a linear isomorphism. 
			This gives a VOA-theoretic construction of the $S(i)$-operator appearing in \cite[eq.~(3.19)]{MS89}, see also \cite{H08}. 
		\end{remark}


		By Theorem~\ref{thm:basiscfb}, for any $\tau\in \H$, there is an isomorphism of vector spaces: 
		$$\mathrm{ev}_\tau: C_W\cong \mathscr{C}(E_\tau,\mpt{p},z,W),\quad \varphi_{I^k_j}\mapsto \varphi_{I^k_j}(\tau),\ \forall k,j.$$
		The composition gives rise to an invertible linear map
		\[
		\tilde{\rho}_\tau(g):=\mathrm{ev}_\tau\circ \tilde{\rho}(g)\circ \mathrm{ev}_\tau^{-1}:\mathscr{C}(E_\tau,\mpt{p},z,W)\to \mathscr{C}(E_\tau,\mpt{p},z,W). 
		\]
		Therefore, $\rho$ \eqref{eq:firstrep} induces a projective representation on each fiber of the conformal block bundle $\mathscr{C}_\H(W)$: 
		\begin{equation}\label{eq:firstrepofSL2Z}
			\rho_\tau: \SL(2,\Z)\ra\mathrm{PGL}(\mathscr{C}(E_\tau,\mpt{p},z,W)),\quad \rho_\tau(g):=[\tilde{\rho}_\tau(g)]=[\mathrm{ev}_\tau\circ \tilde{\rho}(g)\circ \mathrm{ev}_\tau^{-1}].
		\end{equation}
		In particular, given $g=\ga^{-1}\in \SL(2,\Z)$ and $\varphi(\tau)\in \mathscr{C}(E_\tau,\mpt{p},z,W)$ for some $\varphi\in C_W$, we have a pairing description of $\rho_\tau(\ga^{-1})(\varphi(\tau))$ by \eqref{eq:repSl2ZonCW}: 
		\begin{equation}\label{eq:pairingdesforrep}
			\braket*{\rho_\tau(\ga^{-1})(\varphi(\tau))}{v}=\braket*{(\rho(\ga^{-1})(\varphi))(\tau)}{v}=\braket*{\varphi(\ga\tau)}{(c\tau+d)^{-L[0]}v}.
		\end{equation}
		Again, if the conformal weight $h_W\in\mathbb Z$, then the projective representation $\rho_\tau$ lifts to a normal representation $\tilde{\rho}_\tau: \SL(2,\Z)\to \mathrm{GL}(\mathscr{C}(E_\tau,\mpt{p},z,W))$. 
		On the other hand,  by the modular invariance \eqref{eq:5.28}, we have the following matrix coefficients of $\tilde{\rho}_\tau(\ga^{-1})$ under the basis of trace functions $\{\varphi_{I^k_j}(\tau)\}$: 
		\begin{equation}\label{eq:matrixformoftheprep}
			\tilde{\rho}_\tau(\ga^{-1})(\varphi_{I^k_j}(\tau)):=\sum_{k'=0}^N\sum_{j'=1}^{n_{k'}} \ga_{(k,j),(k',j')} \cdot \varphi_{I^{k'}_{j'}}(\tau).
		\end{equation}
		The following Proposition generalizes the unitarity of the $T=\footnotesize{\begin{pmatrix}
				1&1\\0&1
		\end{pmatrix}}$-matrix. 
		\begin{proposition}\label{prop:Tisunitary}
			The $\C$-linear operator $\tilde{\rho}_\tau(T)\in \mathrm{GL}(\mathscr{C}(E_\tau,\mpt{p},z,W))$ is unitary.    
		\end{proposition}
		\begin{proof}
			Since $V$ is a strongly rational VOA,  the conformal weights $h_{M^k}$ of the irreducible modules and the central charge $c$ are all rational numbers \cite[Theorem 11.1]{DLM00}. 
			It suffices to show $\tilde{\rho}_\tau(T^{-1})$ or its matrix $(T_{(k,j),(k',j')})_{K\times K}$ under the basis $\{\varphi_{I_j^k}(\tau)\}$ is unitary. 
			By \eqref{eq:modularinv} and \eqref{eq:matrixformoftheprep}, we have 
			\[
			Z_{I^k_j}\left(v,\tau-1\right)=\braket*{\rho_\tau(T^{-1})(\varphi_{I^k_j}(\tau))}{v}=\sum_{k'=0}^N\sum_{j'=1}^{n_{k'}} T_{(k,j),(k',j')}\cdot  Z_{I^{k'}_{j'}}(v,\tau ). 
			\]
			On the other hand, for any $I\in I\fusion[W][M^k][M^k]$ and $v\in W$, we have 
			\[\tr|_{M^k}o_I(v)e^{-2\pi i (L(0)-\frac{c}{24})}=\sum_{n=0}^\infty \tr|_{M^k(n)}o_I(v) e^{-2\pi i (h_{M^k}+n-\frac{c}{24})}=\tr|_{M^k} o_I(v) \cdot e^{-2\pi i (h_{M^k}-\frac{c}{24})}.\]
			It follows that for any $k,j$, 
			\begin{align*}
				Z_{I^k_j}\left(v,\tau-1\right)&=\tr|_{M^k} o_I(v) e^{2\pi i (\tau-1)(L(0)-\frac{c}{24})} = e^{-2\pi i (h_{M^k}-\frac{c}{24})} \cdot \tr|_{M^k} o_I(v)q^{L(0)-\frac{c}{24}}\\
				&=e^{-2\pi i (h_{M^k}-\frac{c}{24})} \cdot Z_{I^k_j}(v,\tau). 
			\end{align*}
			Thus the matrix $(T_{(k,j),(k',j')})_{K\times K}$ is block-diagonal, with the complex numbers $e^{-2\pi i (h_{M^k}-\frac{c}{24})}$, $0\leq k\leq N$, appearing on the diagonal entries. Since $h_{M^k}-\frac{c}{24}\in \mathbb{Q}$, we have  $|e^{-2\pi i (h_{M^k}-\frac{c}{24})}|=1$ for all $k$. Hence $(T_{(k,j),(k',j')})_{K\times K}\cdot (T_{(k,j),(k',j')})_{K\times K}^\ast=I_K$. 
		\end{proof}
		
		It is well known that the normalized $S$-matrix of a unitary modular tensor category is a unitary matrix. Moreover, in the case $W=V$, this normalized $S$-matrix agrees, up to sign, with the matrix of $\rho(S)$ for the representation in \eqref{eq:firstrepofSL2Z}, see \cite{BK01,Tur94,EGNO15,DLN15}. Equivalently, the matrix $S(0)$ in \eqref{eq:Si} is unitary. This motivates the following question.
		\begin{question}\label{question1}
			Are all the matrices $S(i)$, with $0\leq i\leq N$, unitary? More conceptually, does the space of conformal blocks $ \mathscr{C}(E_\tau,\mpt{p},z,W)$ carry a natural Hermitian form which is invariant under the action of $\SL(2,\Z)$ in \eqref{eq:pairingdesforrep}? 
		\end{question}


		\subsection{Monodromy representation of the fundamental group of $\mathscr{M}_{1,1}$}
		In fact, the representation \eqref{eq:firstrepofSL2Z} of the modular group on the space of conformal blocks $\mathscr{C}(E_\tau,\mpt{p},z,W)$ has a geometric interpretation. For simplicity, we assume that the conformal weight $h_W\in \Z$ in this subsection. So there is a normal representation $\tilde{\rho}_\tau: \SL(2,\Z)\to \mathrm{GL}(\mathscr{C}(E_\tau,\mpt{p},z,W))$ \eqref{eq:pairingdesforrep}. 
		
		
		
		Note that $\mathscr{M}_{1,1}$ is the orbifold quotient space (stack) $[\SL(2,\Z)\bs\!\!\bs \H]$, with the orbifold fundamental group
		$$\pi_1(\mathscr{M}_{1,1})\cong  \SL(2,\Z),$$
		see \cite[Section 3.5]{Ha08}. Let  \(\pi:\H\to\mathscr M_{1,1}\) be the covering map. Recall that by our definition, 
		$$\mathscr C_{\mathbb H}(W)=\pi^\ast\bigl(\mathscr C(W)|_{\mathscr M_{1,1}}\bigr).	$$
		Moreover, $\mathscr C_{\mathbb H}(W)$ carries a  flat connection $\nabla$ which is compatible with the action of $\SL(2,\Z)$ on its global sections, see section~\ref{sec:6a} and Proposition~\ref{prop:sectionproperty}. These evidence naturally suggest the existence of a monodromy representation of the fundamental group $\SL(2,\Z)$ on the fibers $ \mathscr{C}(E_\tau,\mpt{p},z,W)=(\mathscr{C}(W)|_{\mathscr M_{1,1}})_{[\tau]}$ of the conformal block bundle $\mathscr{C}(W)|_{\mathscr M_{1,1}}$ \cite{D70}. 
		
		\subsubsection{$\SL(2,\Z)$-equivariant structure on the bundle $\mathscr{C}_\H(W)$}
		In order to prove the  existence of a monodromy representation, 
		we first descend the connection $\nabla$ \eqref{eq:defofconnectionvarphi} from the bundle $\mathscr{C}_\H(W)$ over $\H$ to a connection on the bundle $\mathscr{C}(W)|_{\mathscr{M}_{1,1}}$ over the quotient stack $\mathscr{M}_{1,1}$. 
		

		
		\begin{definition}\cite{Stack}\label{def:Gequi}
			Let $X$ be a complex manifold (or a scheme) and $G$ be a group which acts holomorphically (algebraically) on $X$. Let $\mathcal{F}$ be a vector bundle on $X$. Note that for any $g\in G$, the action map $g:X\ra X$ is an isomorphism. 
			A {\em $G$-equivariant structure} on $\mathcal{F}$ is a collection of isomorphisms
			\[
			\Phi_g: g^\ast \mathcal{F}\ra \mathcal{F},\quad g\in G,
			\]
			satisfying the cocycle condition $\Phi_{gh}=\Phi_g\circ g^*(\Phi_h): (gh)^*\mathcal F\to \mathcal F$.  Equivalently,  a $G$-equivariant structure on $\mathcal{F}$ is a collection of linear isomorphisms:
			\[
			\Phi_{g}(x): \mathcal{F}_{gx}\cong (g^\ast \mathcal{F})_x\ra \mathcal{F}_x,\quad g\in G,\ x\in X,
			\]
			satisfying the cocycle condition:
			\[
			\Phi_{gh}(x)=\Phi_g(x)\circ \Phi_h(gx): \mathcal F_{(hg).x}\to \mathcal{F}_{gx}\to \mathcal F_x.
			\]
		\end{definition}
		
		The following result is standard for $G$-equivariant structures, see \cite{O16,Stack}.
		\begin{lemma}\label{lm:Gequi}
			Assume $\{\Phi_g:g\in G\}$ is a $G$-equivariant structure on $\mathcal{F}$. Then there is an induced action of $G$ on local sections defined by   
			\[
			\Gamma(U,\mathcal{F})\ra \Gamma(g^{-1}U, \mathcal{F}),\ s\mapsto g^{-1}\cdot s,\quad (g^{-1}\cdot s)(x):=\Phi_g(x)\bigl(s(gx)\bigr).
			\]
			Furthermore, let $\mathcal{E}$ be a vector bundle on the quotient stack $[G\bs\!\!\bs X]$.  Then there is a natural $G$-equivariant structure on the pull back bundle $\pi^\ast(\mathcal{E})$ induced by the fiberwise isomorphisms $\Phi_g(x): \pi^\ast(\mathcal{E})_{gx}\cong \mathcal{E}_{\pi(gx)}= \mathcal{E}_{\pi(x)}\cong \pi^\ast(\mathcal{E})_{x}$, such that for any open substack $U\ssq [G\bs\!\!\bs X]$, 
			\[
			\Gamma(U, \mathcal{E})\cong \Gamma(\pi^{-1}(U), \pi^\ast (\mathcal{E}))^G,
			\]
			where $ \Gamma(\pi^{-1}(U), \pi^\ast (\mathcal{E}))^G$ is the subspace of $G$-equivariant sections: $s\in \Gamma(\pi^{-1}(U), \pi^\ast (\mathcal{E}))$ such that $g^{-1}\cdot s=s$ for all $g\in G$. 
		\end{lemma}
		
		Now we apply the Lemma to 
		\[
		X=\H,\quad G=\SL(2,\Z),\quad [G\bs\!\!\bs X]=\mathscr{M}_{1,1},\quad \mathcal{E}=\mathcal{C}(W)|_{\mathscr{M}_{1,1}},\quad \mathcal{F}=\pi^\ast(\mathcal{E})=\mathscr{C}_\H(W). 
		\]
		First, we give a description of the $\SL(2,\Z)$-equivariant structure on $\mathcal{C}_\H(W)$, which also leads to a geometric interpretation of the $\SL(2,\Z)$ action on global sections \eqref{eq:modulargpacion}.
		
		\begin{proposition}\label{prop:SL2Zequi}
			For any $\tau\in \H$ and $\ga=\footnotesize{\begin{pmatrix}
					a&b\\c&d
			\end{pmatrix}}\in \SL(2,\Z)$,
			the following linear map is a well-defined linear isomorphism: 
			\begin{equation}\label{eq:descondiso}
				\begin{aligned}
					\Phi_{\ga}(\tau):\mathscr{C}_\H(W)_{\ga\tau}&=\mathscr{C}\left(E_{\ga\tau},\ga\mpt{p},\frac{z}{c\tau+d},W\right)\ra  \mathscr{C}(E_\tau,\mpt{p},z,W)=\mathscr{C}_\H(W)_\tau,\\
					& \braket*{\Phi_{\ga}(\tau)(\psi)}{v}:=\braket*{\psi}{(c\tau+d)^{-L[0]}.v},\quad \psi\in \mathscr{C}_\H(W)_{\ga\tau}.
				\end{aligned}
			\end{equation}
			Moreover, it satisfies the cocycle condition
			\begin{equation}\label{eq:Dgacompose}
				\Phi_{\theta\ga}(\tau)=\Phi_\ga(\tau)\circ \Phi_\theta(\ga\tau),\quad \ga,\theta\in \SL(2,\Z),\ \tau\in \H.
			\end{equation}
			In particular, the collection $\{\Phi_\ga: \ga^\ast (\mathscr{C}_\H(W))\ra \mathscr{C}_\H(W)\}$ is a $\SL(2,\Z)$-equivariant structure on the bundle  $\mathscr{C}_\H(W)$. 
		\end{proposition}
		\begin{proof}
			Similar to the proof of Lemma~\ref{lm:welldefofgaact}, for $\psi\in \mathscr{C}(E_{\ga\tau},\ga\mpt{p},\frac{z}{c\tau+d},W) $, using the $x^{-L[0]}$ conjugation formula and basic properties of elliptic functions, we can show that 
			\[
			\braket*{\Phi_{\ga}(\tau)(\psi)}{R_1(a).v}=0,\quad \mathrm{and}\quad  \braket*{\Phi_{\ga}(\tau)(\psi)}{R_{\wp_m(z,\tau)}(a).v}=0,\ m\geq 2.
			\]	
			Hence $\Phi_\ga(\tau)(\psi)$ is a well-defined conformal block in $\mathscr{C}(E_\tau,\mpt{p},z,W)$. Clearly, $\Phi_{1}(\tau)=\Id$, and for $\theta=\footnotesize{\begin{pmatrix}
					a'&b'\\c'&d'
			\end{pmatrix}}$, we have 
			\begin{align*}
				\braket*{(\Phi_\ga(\tau)\circ \Phi_\theta(\ga\tau))(\psi)}{v}&=\braket*{\Phi_\theta(\ga\tau)(\psi)}{(c\tau+d)^{-L[0]}v}\\
				&=\braket*{\psi}{\left(c'\left(\frac{a\tau+b}{c\tau+d}\right)+d'\right)^{-L[0]}(c\tau+d)^{-L[0]}v}\\
				&=	\braket*{\psi}{((ac'+d'c)\tau+(c'b+d'd))^{-L[0]}v}=\braket*{	\Phi_{\theta\ga}(\tau)(\psi)}{v}.
			\end{align*}
			This proves \eqref{eq:Dgacompose} and $\Phi_{\ga}(\tau)^{-1}=\Phi_{\ga^{-1}}(\ga\tau)$. i.e., each $\Phi_\ga(\tau)$ is a linear isomorphism. Then by Definition~\ref{def:Gequi}, $\{\Phi_\ga\}$ is a $\SL(2,\Z)$-equivariant structure. 
		\end{proof}
		
		By Lemma~\ref{lm:Gequi},  the $\SL(2,\Z)$-equivariant structure has an induced action on the space of sections given by 
		\begin{equation}\label{eq:equivalentactions}
			\braket*{(\ga^{-1}\cdot \varphi)(\tau)}{v}=\braket*{\Phi_\ga(\tau)(\varphi(\ga\tau))}{v}=\braket*{\varphi(\ga\tau)}{(c\tau+d)^{-L[0]}v}.
		\end{equation}
		This action has the exact same form with our previously defined $\SL(2,\Z)$-action on the global sections $\Gamma(\H, \mathscr{C}_\H(W))$, see \eqref{eq:modulargpacion} and Proposition~\ref{prop:sectionproperty}. i.e., $\ga^{-1}\cdot \varphi=\ga^{-1}.\varphi$. In particular, we can rewrite the compatibility of the connection with the $\SL(2,\Z)$-action in  Proposition~\ref{prop:sectionproperty} as 
		\begin{equation}\label{eq:connectioncpt}
			\nabla_{\xi}(\gamma^{-1}\cdot\varphi)
			=
			\gamma^{-1}\cdot\bigl(\nabla_{\gamma_\ast\xi}\varphi\bigr),
		\end{equation}
		where $\ga\in \SL(2,\Z)$, $\xi\in \Gamma(\pi^{-1}(U),\mathscr{T}_{\H})$, and $\varphi\in \Gamma(\pi^{-1}(U),\mathscr{C}_\H(W))$.

		\begin{lemma}\label{lm:descendconnection}
			The flat connection $\nabla$ \eqref{eq:defofconnectionvarphi} on $\mathscr{C}_\H(W)$ descends to a flat connection $\nabla$ on the conformal block bundle $\mathscr{C}(W)|_{\mathscr{M}_{1,1}}$ over moduli space $\mathscr{M}_{1,1}$. 
		\end{lemma}	
		\begin{proof}
			By Lemma~\ref{lm:Gequi} and Proposition~\ref{prop:SL2Zequi},  we can identify the $\SL(2,\Z)$-equivariant sections of $\mathscr{C}_\H(W)$ with sections of $\mathscr{C}(W)|_{\mathscr{M}_{1,1}}$. In particular, we have an isomorphism: 
			\begin{equation}\label{eq:sectionid}
				\Phi_U: \Gamma(U, \mathscr{C}(W)|_{\mathscr{M}_{1,1}})\cong \Gamma(\pi^{-1}(U),\mathscr{C}_\H(W))^{\SL(2,\Z)},\quad s\mapsto \tilde{s}, 
			\end{equation}
			where $U\ssq \mathscr{M}_{1,1}$ is an open subset. 
			
			We construct the descended connection $\nabla^{\mathscr{M}_{1,1}}: \mathscr{C}(W)|_{\mathscr{M}_{1,1}}\ra \mathscr{C}(W)|_{\mathscr{M}_{1,1}}\otimes \Omega_{\mathscr{M}_{1,1}}$ as follows. 		
			Given a local vector field $A\in \Gamma(U,\mathscr{T}_{\mathscr{M}_{1,1}})$, let $\widetilde{A}\in \Gamma(\pi^{-1}(U), \mathscr{T}_{\H})$ be the pullback vector field along $\pi:\H\ra \mathscr{M}_{1,1}=[\SL(2,\Z)\bs\!\!\bs \H]$. Then $\widetilde{A}$ is $\SL(2,\Z)$-equivariant:
			$
			\ga_\ast \widetilde{A}_\tau=\widetilde{A}_{\ga\tau}. 
			$ 
			Define
			\begin{equation}\label{eq:connectionrelation}
				\nabla^{\mathscr{M}_{1,1}}_{A}: \Gamma(U, \mathscr{C}(W)|_{\mathscr{M}_{1,1}})\ra \Gamma(U, \mathscr{C}(W)|_{\mathscr{M}_{1,1}}),\quad \nabla^{\mathscr{M}_{1,1}}_{A}(s):=\Phi_U^{-1}(\nabla_{\widetilde{A}}(\tilde{s})).
			\end{equation}
			where $\tilde{s}\in \Gamma(\pi^{-1}(U),\mathscr{C}_\H(W))$ is a lift of the section $s$ under $\Phi_U $\eqref{eq:sectionid}. 
			
			To show $\nabla^{\mathscr{M}_{1,1}}_{A}$ is well-defined, we need to check the section $\nabla_{\widetilde{A}}(\tilde{s})$ is $\SL(2,\Z)$-equivariant. Indeed, since $\tilde{s}$ is $\SL(2,\Z)$-equivariant, it follows from \eqref{eq:connectioncpt} that 
			\[
			\ga^{-1}\cdot \nabla_{\widetilde{A}_{\ga\tau}}(\tilde{s})=\ga^{-1}\cdot \nabla_{\ga_\ast\widetilde{A}_\tau}(\tilde{s})=\nabla_{\widetilde{A}_\tau}(\ga^{-1}\cdot \tilde{s})=\nabla_{\widetilde{A}_\tau}(\tilde{s}),
			\]
			for any $\tau\in \pi ^{-1}(U)$. Hence $\ga^{-1}\cdot \nabla_{\widetilde{A}}(\tilde{s})=\nabla_{\widetilde{A}}(\tilde{s})$ for any $\ga\in \SL(2,\Z)$, and $\nabla^{\mathscr{M}_{1,1}}_{A}$ is well-defined. Finally, the Leibniz rule of $\nabla^{\mathscr{M}_{1,1}}_{A}$ \eqref{eq:Leibniz} follows from the Leibniz rule of $\nabla_{\widetilde{A}}$ and the fact that $\Phi_U$ is an isomorphism; the curvature of $\nabla^{\mathscr{M}_{1,1}}$ pullbacks to the curvature of $\nabla$, which is zero. 
		\end{proof}
		
		\begin{remark}
			Although not being immediately obvious, we suspect that the descended connection $\nabla^{\mathscr{M}_{1,1}}$ of the conformal block bundle $\mathscr{C}(W)|_{\mathscr{M}_{1,1}}$ agrees with the Damiolini-Gibney-Tarasca's  projectively flat connection $\nabla$ of the conformal block bundle $\mathscr{C}(W)$ over $\overline{\mathscr{M}}_{1,1}$ \cite{DGT21}.
		\end{remark}	
		
		\subsubsection{Monodromy representation of $\SL(2,\Z)$ on the fibers}
		Now we can describe the monodromy representation of the fundamental group $\pi_1(\mathscr{M}_{1,1})\cong \SL(2,\Z)$ on the fibers of the vector bundle with flat connection $(\mathscr{C}(W)|_{\mathscr{M}_{1,1}},\nabla^{\mathscr{M}_{1,1}})$ over $\mathscr{M}_{1,1}$.
		
		Fix a $[\tau]\in \mathscr{M}_{1,1}$. Let $\ga\in \SL(2,\Z)$, and $\eta: [0,1]\ra \mathscr{M}_{1,1}$ be a loop based at $[\tau]\in \mathscr{M}_{1,1}$ whose homotopy class $[\eta]$ in the fundamental group is $\ga^{-1}$. 
		For any 
		$\psi\in (\mathscr{C}(W)|_{\mathscr{M}_{1,1}})_{[\tau]},$
		let $s$ be the unique $\nabla^{\mathscr M_{1,1}}$-parallel section along $\eta$:  $\nabla^{\mathscr{M}_{1,1}}_{\eta'(t)}(s(t))=0$, satisfying
		$s(0)=\psi$ .
		By Lemma~\ref{lm:parallel}, the monodromy (parallel transport) along the path $\eta$ is the linear isomorphism
		\[P_\eta:(\mathscr C(W)|_{\mathscr M_{1,1}})_{[\tau]}
		\ra
		(\mathscr C(W)|_{\mathscr M_{1,1}})_{[\tau]},\quad 
		P_\eta(\psi)=s(1).
		\]
		Since $\nabla^{\mathscr{M}_{1,1}}$ is flat, $P_\eta$ only depends on the homotopy class $\ga=[\eta]$ \cite{D70}. 
		Recall that the fiber $(\mathscr{C}(W)|_{\mathscr{M}_{1,1}})_{[\tau]}\cong \mathscr{C}(E_\tau,\mpt{p},z,W)$ \eqref{eq:fiberid}. Then we have the monodromy representation of $\SL(2,\Z)$: 
		\begin{equation}\label{eq:monorep}
			\begin{aligned}
				\rho_{\nabla^{\mathscr{M}_{1,1}}}: \SL(2,\Z)\ra \mathrm{GL}( \mathscr{C}(E_\tau,\mpt{p},z,W)),\quad \rho_{\nabla^{\mathscr{M}_{1,1}}}(\ga^{-1}):=P_\eta. 
			\end{aligned}
		\end{equation}
		
		\begin{proposition}\label{prop:monodroyrep}
			The monodromy representation $	\rho_{\nabla^{\mathscr{M}_{1,1}}}$ \eqref{eq:monorep} is equivalent to the representation $\tilde{\rho}_\tau$ \eqref{eq:firstrepofSL2Z} defined by trace functions and their modular invariance.  
		\end{proposition}
		\begin{proof}
			We first give a more precise description of the representation $	\rho_{\nabla^{\mathscr{M}_{1,1}}}$ \eqref{eq:monorep} using the pull-back bundle $(\mathscr{C}_\H(W),\nabla)$ over $\H$. 
			
			Recall that the correspondence $\pi_1(\mathscr{M}_{1,1})\cong \SL(2,\Z)$ is given by lifting of the loops under the covering map $\pi: \H\ra \mathscr{M}_{1,1}=[\SL(2,\Z)\bs\!\!\bs \H],\tau\mapsto[\tau]$. Indeed, given a loop $\eta:[0,1]\ra \mathscr{M}_{1,1}$ based at $[\tau]$, a lifting $\tilde{\eta}: [0,1]\ra \H$ is a path starting at $\tau\in \pi^{-1}([\tau])=\SL(2,\Z).\tau$, ends at some point $\ga\tau$ in the fiber. The end point $\ga\tau$ identifies the loop $[\eta]\in\pi_1(\mathscr{M}_{1,1})$ with $\ga^{-1}\in \SL(2,\Z)$, with our convention.
			We have the parallel transport isomorphism along the lifted path: 
			\[
			P_{\tilde{\eta}}: \mathscr{C}_\H(W)_\tau=\mathscr{C}(E_\tau,\mpt{p},z,W)\to \mathscr{C}\left(E_{\ga\tau},\ga\mpt{p},\frac{z}{c\tau+d},W\right)=\mathscr{C}_\H(W)_{\ga\tau}.
			\]
			Let $\psi\in \mathscr{C}_\H(W)_\tau$, and $s$ be the unique $\nabla^{\mathscr{M}_{1,1}}$-parallel section along $\eta$ such that $s(0)=\psi$. By Lemma~\ref{lm:descendconnection}, the connection $\nabla^{\mathscr{M}_{1,1}}$ is descended from $\nabla$. Hence the unique $\SL(2,\Z)$-equivariant lifted section $\tilde{s}$ is $\nabla$-parallel along the lifted path $\tilde{\eta}$ such that $\tilde{s}(0)=\psi$, in view of \eqref{eq:connectionrelation}. Then $P_{\tilde{\eta}}(\psi)=\tilde{s}(1)$. 
			
			The composition of $P_{\tilde{\eta}}$ with the descent isomorphism \eqref{eq:descondiso} is the parallel transport $P_\eta$:
			\[
			P_\eta=\Phi_\ga(\tau)\circ P_{\tilde{\eta}}: \mathscr{C}(E_\tau,\mpt{p},z,W)\ra \mathscr{C}(E_\tau,\mpt{p},z,W). 
			\]
			So then 
			\[
			\braket*{P_\eta(\psi)}{v}=\braket*{\Phi_\ga(\tau)(\tilde{s}(1))}{v}=\braket*{\tilde{s}(1)}{(c\tau+d)^{-L[0]}v}.
			\]
			In particular, let $\psi=\varphi(\tau)$ for some $\varphi\in C_W$. Since $\varphi$ is horizontal with respect to $\nabla$, then $\tilde{s}=\varphi|_{\tilde{\eta}}$ and $\tilde{s}(1)=\varphi(\tilde{\eta}(1))=\varphi(\ga\tau)$. Hence 
			\[
			\braket*{\rho_{\nabla^{\mathscr{M}_{1,1}}}(\ga^{-1})(\varphi(\tau))}{v}=\braket*{P_\eta(\varphi(\tau))}{v}=\braket*{\varphi(\ga\tau)}{(c\tau+d)^{-L[0]}v}=\braket*{\tilde{\rho}_\tau(\ga^{-1})(\varphi(\tau))}{v},
			\]
			in view of   \eqref{eq:monorep} and \eqref{eq:pairingdesforrep}. This shows $\rho_{\nabla^{\mathscr{M}_{1,1}}}(\ga^{-1})=\tilde{\rho}_\tau(\ga^{-1})$. 
		\end{proof}
		
		Recall that  a {\em Hermitian structure $h$} on a complex vector bundle $E\to X$ is a smoothly varying Hermitian inner product on each fiber: $h_x: E_x\times E_x\ra \C$, $x\in X$. A flat connection $\nabla$ on a bundle with Hermitian structure $h$ is called an {\em $h$-connection}, if 
		\[
		d(h(s,t))=h(\nabla s,t)+h(s,\nabla t),\quad s,t\in \Gamma(U,\mathcal{E}).
		\]
		For such bundles, the parallel transport $P_\eta: E_{x_0}\ra E_{x_1}$ preserves the Hermitian metric. In particular, if $\eta$ is a loop based at $x$, then $P_\eta\in \mathrm{GL}(E_x)$ is an unitary operator, see \cite{Kob87,GH78} for more details. We ask the following question with this notion: 
		
		\begin{question}
			Does the conformal block bundle $(\mathscr{C}(W)|_{\mathscr{M}_{1,1}},\nabla^{{\mathscr{M}_{1,1}}})$ admit a natural Hermitian structure $h$ such that the flat connection $\nabla^{{\mathscr{M}_{1,1}}}$ is an $h$-connection? 
		\end{question}
		If this question has an affirmative answer, then $\rho_{\nabla^{\mathscr{M}_{1,1}}}(\ga^{-1})=P_\eta$ is a unitary operator, and  Question~\ref{question1} would have an affirmative answer by Proposition~\ref{prop:monodroyrep}.

		\subsection{VOA-theoretical realizations of modular forms}
		There are also number theoretical consequences of our generalize modular invariance theorem~\ref{thm:modularinv}. Namely, we can use trace functions associated to intertwining operators to realize the actual modular forms.
		
		\subsubsection{Vector-valued modular form}
		We first recall the definition of vector-valued modular form, see \cite{KM03,KM04,BG07} for more details.

		Let $\rho:\SL(2,\Z)\ra \mathrm{GL}_K(\C)$ be a representation, and $r\in \mathbb{Q}$. A {\em vector-valued modular form of weight $r$}  is a map $\vec{F}:\H\ra \C^K$, with $\vec{F}(\tau)=(F_1(\tau),\ds, F_K(\tau))^t$ such that 
		\begin{enumerate}
			\item $F_i(\tau)$ is holomorphic in $\tau\in \H$ and has a convergent $q$-series expansion meromorphic at infinity:
			\[
			F_i(\tau)=\sum_{n\geq h_i} a_n(i) q^{n/N_i},
			\]
			where $N_i$ is a positive integer. 
			\item For $\ga=\footnotesize{\begin{pmatrix}
					a&b\\c&d
			\end{pmatrix}}\in \SL(2,\Z)$, define a slash operation
			\[
			(\vec{F}|_{r,\rho}\gamma)(\tau)
			=
			(c\tau+d)^{-r}\cdot \rho(\gamma)\left(\vec{F}(\gamma\tau)\right).
			\]
			Then $\vec{F}(\tau)=(\vec{F}|_{r,\rho}\gamma)(\tau)$ for all $\ga\in \SL(2,\Z)$. 
		\end{enumerate}

		With our notation \eqref{eq:defZ}, fix a $L[0]$-homogeneous element $v\in W$,
		\[
		Z_{I^k_j}(v,\cdot)=\braket*{\varphi_{I^k_j}(\cdot)}{v}=\tr|_{M^k}o_{I^k_j}(v)e^{2\pi i (\cdot)(L(0)-\frac{c}{24})}: \H\ra \C
		\]
		is holomorphic in $\tau$ and has a Laurent expansion at infinity, see Theorem~\ref{thm:conv}. With the lexicographic order on the index set $\{(k,j):0\leq k\leq N,1\leq j\leq n_k\}$, consider the following vector:
		\begin{equation}
			\vec{Z}_W(v,\tau):=\left(Z_{I^0_0}(v,\tau),	Z_{I^0_1}(v,\tau),\ds, Z_{I^N_{n_N-1}}(v,\tau),	Z_{I^N_{n_N}}(v,\tau)  \right)^t\in \C^K.
		\end{equation}
		By \eqref{eq:matrixformoftheprep}, for the representation $\rho_\tau:\SL(2,\Z)\ra\mathrm{GL}_K(\C),\ \ga^{-1}\mapsto [\ga_{(k,j),(k',j')}]_{K\times K}$, we have 
		\[
		\rho_\tau(\ga^{-1})(	Z_{I^k_j}(v,\tau))=\sum_{k'=0}^N\sum_{j'=1}^{n_{k'}} \ga_{(k,j),(k',j')} \cdot Z_{I^{k'}_{j'}}(v,\tau).
		\]
		On the other hand, by \eqref{eq:modularinv}, we have 
		\[
		Z_{I^k_j}\left(v,\frac{a\tau+b}{c\tau+d}\right)=(c\tau+d)^{[\wt v]}\cdot \sum_{k'=0}^N\sum_{j'=1}^{n_{k'}} \ga_{(k,j),(k',j')}\cdot  Z_{I^{k'}_{j'}}(v,\tau ),
		\]
		where $L[0]v=[\wt v]\cdot v$, with $[\wt v]=h_W+n\in \mathbb{Q}$ for some $n\in \Z$. Therefore, 
		\begin{equation}\label{eq:vectorvaluedform}
			\vec{Z}_W\left(v,\frac{a\tau+b}{c\tau+d}\right)=(c\tau+d)^{[\wt v]}\cdot  \rho_\tau(\ga^{-1})\left(\vec{Z}_W(v,\tau)\right),
		\end{equation}
		for all $\ga\in \SL(2,\Z)$. This shows $\vec{Z}_W(v,\tau)$ is a vector-valued modular form of weight $[\wt v]$. 
		
		These are genuine new vector-valued modular forms in addition to the ones $\vec{Z}_V(v,\tau)$ given by Zhu's modular invariance of vertex operators. 
		
		\subsubsection{Generalized modular form with characters}
		Now we consider the special case when $K=1$, or in other words, $\bigoplus_{k=0}^N I\fusion[W][M^k][M^k]=\C I$ is one-dimensional, which is spanned by a single intertwining operator $I\in I\fusion[W][M][M]$ for some irreducible module $M\in \mathscr{W}$. For concrete examples, this condition can be checked by fusion rules.
		
		Then for any $v\in W$, 
		$
		\vec{Z}_W(v,\tau)=Z_I(v,\tau)
		$ is a usual holomorphic function on $\H$ by Theorem~\ref{thm:conv}. The representation $\rho_\tau:\SL(2,\Z)\to \mathrm{GL}(\C)=\C^\times, \ga^{-1}\mapsto \rho_\tau(\gamma^{-1})$ is a character $\chi$ of the modular group. We assume 
		$$\chi: \SL(2,\Z)\to \C^\times,\quad \ga\mapsto \chi(\ga)=\rho_\tau(\ga^{-1}).$$
		Then $\chi(\theta\ga)=\chi(\theta)\chi(\ga)$ since $\C^\times$ is a commutative group. From the proof of Proposition~\ref{prop:Tisunitary}, it is easy to see that $\rho_\tau(T^{-1})(Z_I(v,\tau))=e^{-2\pi i (h_M-\frac{c}{24})}\cdot Z_I(v,\tau)$. Hence
		\begin{equation}\label{eq:ChiT}
			\chi(T)=e^{-2\pi i (h_M-\frac{c}{24})},\quad T=\begin{pmatrix}
				1&1\\0&1
			\end{pmatrix}.
		\end{equation}
		Now \eqref{eq:vectorvaluedform} becomes 
		\begin{equation}\label{eq:ZImodualr}
			Z_I\left(v,\frac{a\tau+b}{c\tau+d}\right)=\chi(\ga)(c\tau+d)^{[\wt v]} \cdot  Z_I(v,\tau).
		\end{equation}
		Moreover, as a series in $q=e^{2\pi i \tau}$, we have
		\begin{equation}\label{eq:ZIexp}
			Z_I(v,\tau)=q^{h_M-\frac{c}{24}}\cdot \sum_{n=0}^\infty \tr|_{M(n)}o_I(v) q^{n},
		\end{equation} 
		with $h_M-\frac{c}{24}\in \mathbb{Q}$. Then $Z_I(v,\tau)$ is meromorphic at $q=0$ after passing to a finite cover of a punctured neighborhood of the cusp given by $q=Q^N$, where $N\in \N$ such that $N(h_M-\frac{c}{24})\in \Z$. 
		This motivates the following definition, which is similar to  the vector-valued modular form: 
		
		\begin{definition}
			Let \(k\in \mathbb Q\), and let $
			\chi:\SL(2,\mathbb Z)\to \mathbb C^\times$
			be a character. We call a holomorphic function \(f:\mathbb H\to\mathbb C\) a
			{\bf generalized modular form of weight $k$ and character \(\chi\)}
			if the following conditions hold.
			
			\begin{enumerate}
				\item For every
				$
				\gamma=\footnotesize{\begin{pmatrix}a&b\\ c&d\end{pmatrix}}\in \SL(2,\mathbb Z),
				$
				we have
				\[
				f(\gamma\tau)=\chi(\gamma)(c\tau+d)^k \cdot f(\tau).
				\]
				
				\item At the cusp \(q=0\), \(f\) admits a convergent expansion of the form
				\[
				f(\tau)=q^{r/N}\cdot \sum_{n=0}^{\infty}a_nq^n,
				\]
				where $a_n\in \C$, and $r,N\in \Z$ such that $\gcd(r,N)=1$. 
				Equivalently, \(f\) becomes meromorphic at the cusp after passing to a finite cover
				\(q=Q^N\) for some \(N\in\mathbb N\).
			\end{enumerate}
		\end{definition}
		In particular, if $\bigoplus_{k=0}^N I\fusion[W][M^k][M^k]=\C I$, for some nonzero $I\in I\fusion[W][M][M]$, then $Z_I(v,\tau)$ is a generalized modular form of weight $[\wt v]$ and character $\chi(\ga)=\rho_\tau(\ga^{-1})$, in view of \eqref{eq:ZImodualr} and \eqref{eq:ZIexp}. Under certain extra assumptions, we can obtain the usual modular form: 
		
		\begin{theorem}\label{thm:modularform}
			Assume $\bigoplus_{k=0}^N I\fusion[W][M^k][M^k]=\C I$, for some nonzero $I\in I\fusion[W][M][M]$, and $h_M-\frac{c}{24}\in \mathbb{N}$. Let $v\in W$ be a $L[0]$- homogeneous element such that $[\wt v]$ is also an integer. Then $Z_I(v,\tau)$ is a modular form of weight $k=[\wt v]$. In other words, $Z_I(v,\tau)\in M_k(\Gamma)$, where $\Gamma=\SL(2,\Z)$.  
		\end{theorem}
		\begin{proof}
			Since $h_M-\frac{c}{24}=m\in \N$, it follows from \eqref{eq:ChiT} that $\chi(T)=e^{-2\pi i m}=1$. The modular group $\SL(2,\Z)$ is generated by $S$ and $T$, subjects to the relations: 
			\[
			S^4=I,\quad (ST)^3=S^2,
			\]
			see \cite{S73,Apo90}. In particular, $\chi(S)^3=\chi(S)^3\chi(T)^3=\chi(S)^2$ in $\C^\times$ since $\chi$ is a group homomorphism. Then we have  $\chi(S)=1$ and so $\chi\equiv 1$ is a trivial character. Hence \eqref{eq:ZImodualr} becomes $	Z_I(v,\ga\tau)=(c\tau+d)^{[\wt v]} \cdot  Z_I(v,\tau).$
			Moreover, by  \eqref{eq:ZIexp}, 
			\[
			Z_I(v,\tau)=q^m\cdot  \sum_{n=0}^\infty \tr|_{M(n)}o_I(v) q^{n}
			\]
			is a holomorphic at the cusp $q=0$. Hence it is a modular form of weight $k=[\wt v]. $
		\end{proof}
		
		Note that if $v\in W(n)$ is a Virasoro primary vector for $\om$: 
		\[
		L(0)v=(h_W+n)\cdot v,\quad L(n)v=0,\ n\geq 1,
		\]
		then $[\wt v]=h_W+n$, since $L[0]=L(0)+\sum_{i\geq 1} l_i L(i)$ by Lemma~\ref{lm:L[0]actiononW}. 
		
	
	\subsubsection{Realization of the cusp form $\Delta$ from affine VOA $L_{\hat{\sl}_2}(16,0)$} 
	We show that the weight $12$ cusp form $\Delta(\tau)$ can be realized as some $Z_I(v,\tau)$. 
	
	Let $V=L_{\hat{\sl}_2}(k,0)$, with $k\in \Z_{>0},$ see section~\ref{sec:4c}. 
	Then $P_+=\Z_{\geq 0} \frac{\al}{2}$, $\theta=2\rho=\al$ with $(\al|\al)=2$, and $h^\vee=2$. Note that the conditions in Theorem~\ref{thm:modularinv} are satisfied, see Proposition~\ref{prop:Virasuffcond}.
	
	We label the irreducible modules over $L_{\hat{\mathfrak{sl}}_2}(k,0)$ by 
	\[
	M^i=L_{\hat{\sl}_2}(k,i (\al/2)),\quad h_{M^i}=\frac{i(i+2)}{4(2+k)},\quad 0\leq i\leq k. 
	\]
	Denote the fusion rule among irreducible modules $M^i,M^j,M^k$ by $N_{ij}^k$. The following result is well-known in WZNW conformal field theory and affine VOAs \cite{V88,B96,FZ92}: 
	\begin{equation}\label{eq:affinefusion}
		N_{pq}^{r}=\begin{cases}
			1&\mathrm{if}\ p+q+r=2m,\ m\leq k\ \mathrm{and}\ p,q,r\leq m,\\
			0& \mathrm{otherwise}.
		\end{cases}
	\end{equation}
	
	Now let $k=16$. Then 
	\[
	c=\frac{16\cdot 3}{16+2}=\frac{8}{3},\quad h_{M^i}=\frac{(i+2)i}{72},\quad 0\leq i\leq 16.
	\]
	Let $W=M^{16}$, then $h_W=4$. By \eqref{eq:affinefusion}, $N_{16q}^r=1$ only if $q+r=16$. Therefore, 
	\[
	M^{16}\boxtimes_V M^q\cong M^{16-q},\quad 0\leq q\leq 16.
	\]
	This shows $\bigoplus_{k=0}^{16}I\fusion[M^{16}][M^k][M^k]=\C I$ is one-dimensional, where $I\fusion[M^{16}][M^8][M^8]=\C I\neq 0$. Note that 
	$
	h_{M^8}-\frac{c}{24}=\frac{10}{9}-\frac{1}{9}=1\in \N
	$. Recall that by Lemma~\ref{lm:L[0]actiononW}, the $L[0]$-conformal weight of $W=M^{16}$ is the same as the $L(0)$-conformal weight $h_W=4$. 
	Then by \eqref{eq:ZIexp} and Theorem~\ref{thm:modularform}, for any $v\in W[n]$, the trace function
	\[
	Z_I(v,\tau)=q\cdot\sum_{n=0}^\infty \tr|_{M^8} o_I(v) q^n
	\]
	is a cusp form of weight $h_W+n=4+n$.  i.e., $Z_I(v,\tau)\in S_{4+n}(\Gamma)$. Since there is no cusp forms of weight less than $12$, we have $Z_I(v,\tau)=0$ for $v\in W[n]$, where $n<8$. 
	
	\begin{proposition}\label{prop:affinecusp}
		For the affine VOA $V=L_{\hat{\sl}_2}(16,0)$, with $W=M^{16}$ and $M=M^8$, there exists $\tilde{v}\in W$ and $I\in I\fusion[W][M][M]$ such that 
		\[
		Z_I(\tilde{v},\tau)=  \Delta(\tau)=q\prod_{n=1}^\infty (1-q^n)^{24}.
		\]
	\end{proposition}
	\begin{proof}
		Recall that the isomorphism in the fusion rules theorem is given by 
		\begin{align*}
			&I\fusion[W][M][M]\cong \Hom_{A(V)}(A(W)\otimes_{A(V)}M(0),M(0)),\quad I\mapsto f_I,
		\end{align*}
		where $I$ is uniquely determined by $o_I(\cdot)$ which has the following relation with $f_I$: 
		\[
		\braket*{v'}{f_I([w]\otimes u)}=\braket*{v'}{o_I(w)u},\quad v'\in M(0)^\ast, w\in W,\ u\in M(0),
		\]
		see \eqref{eq:fusionrulesfI}. Note that for affine VOAs, the bimodule $A(W)$ has the following description: 
		\begin{equation}\label{eq:AWiso}
			\begin{aligned}
				&	A(W)\cong \left(L(16)\otimes_\C U(\sl_2)\right)/\<u_{16}\otimes e\>,\\
				[a^1(-n_1-1)&\ds a^r(-n_r-1)v]\mapsto (-1)^rv\otimes (a^1\ds a^r)+ \<u_{16}\otimes e\>
			\end{aligned}
		\end{equation}
		where $n_1\geq \ds \geq n_r\geq 0$, $v\in L(16),$  $u_{16}\in L(16)$ is the $\sl_2$-highest-weight vector, and $\<u_{16}\otimes e\>$ is a sub-bimodule generated by $u_{16}\otimes e$, see \cite[Theorem 3.2.2]{FZ92}. In particular, since $A(V)$ is semisimple \cite{DLM98}, $L(8)$ is a flat $A(V)$-module. Hence
		\begin{equation}\label{eq:AWtensorid}
			\begin{aligned}
				A(W)\otimes_{A(V)} M(0)&\cong \left(L(16)\otimes U(\sl_2)\otimes _{A(V)} L(8)\right)/\<u_{16}\otimes e\>\otimes_{A(V)} L(8)\\
				&\cong \left(L(16)\otimes _\C L(8)\right)/\<u_{16}\otimes (eU(\sl_2)).L(8)\>,
			\end{aligned}
		\end{equation}
		with the left $A(V)\cong U(\sl_2)/\<e^{16}\>$ action given by 
		\[
		X.(\overline{u\otimes v})=\overline{Xu\otimes v}+\overline{u\otimes Xv},\quad X\in \sl_2,\ u\in L(16),\ v\in L(8).
		\]
		Note that $(eU(\sl_2)).L(8)=\spn\{u_{8},fu_8,\ds, f^7u_8\}$. Then it follows from the identification \eqref{eq:AWtensorid} that $[u_{16}]\otimes f^8u_8$ is a nonzero element in $A(W)\otimes_{A(V)}M(0)$. It is also a $\sl_2$-highest-weight vector of weight $8$, since $hu_{16}=16 u_{16}$ and $h(f^8u_8)=-8f^8u_8$. 
		
		As $I\fusion[W][M][M]$ is one-dimensional, we may choose a normalized intertwining operator $I$ such that $f_I([u_{16}]\otimes f^8u_8)=u_8\in M(0).$
		Let $\pi: L(16)\otimes_\C L(8)\to A(W)\otimes_{A(V)}M(0)$ be the quotient map, which is a $\sl_2$-module homomorphism. Then 
		\[
		F:=f_I\circ \pi: L(16)\otimes_\C L(8)\to L(8),\quad F(u_{16}\otimes f^8u_8)=u_8,
		\]
		is also a $\sl_2$-homomorphism. 
		Since $F$ preserves $h$-eigenspaces, we have 
		\[
		F(f^ru_{16}\otimes f^su_8)=C_{r,s}\cdot f^{r+s-8}u_8,\quad 0\leq r\leq 16,\ 0\leq s\leq 8,
		\]
		where the constant coefficients $C_{r,s}=0$ if $r+s-8\notin \{0,1,\ds,8 \}$, and $C_{0,8}=1$. Note that 
		\begin{align*}
			C_{r,s}\cdot f^{r+s-7}u_8&=f.F(f^ru_{16}\otimes f^su_8)=F(f^{r+1}u_{16}\otimes f^su_8)+F(f^ru_{16}\otimes f^{s+1}u_8)\\
			&=C_{r+1,s}\cdot f^{r+s-7}u_8+ C_{r,s+1}\cdot f^{r+s-7}u_8.
		\end{align*}
		Then we have a recursive formula $C_{r,s}=C_{r+1,s}+C_{r,s+1}$, with normalized condition $C_{0,8}=1$ and $C_{0,s}=0$ for $s<8$. It is easy to show by induction that $C_{r,s}=(-1)^{8-s}\binom{r}{8-s}$, and 
		\[
		f_I([f^8u_{16}]\otimes f^su_8)=(-1)^{8-s}\binom{8}{8-s} \cdot f^su_8,\quad 0\leq  s\leq 8.
		\]
		Using the Lagrange interpolation formula, we can find a polynomial $P(h)\in \C[h]$ such that 
		$
		P(8-2s)=(-1)^{8-s}/\binom{8}{8-s}
		$ for all $s$. We claim that $\deg P=8$. 
		
		Indeed, let $Q(x)=P(8-2x)$ be a  polynomial in $x$. Then $\deg Q=\deg P$ and $Q(s)=(-1)^{8-s}/\binom{8}{8-s}$. Define the difference $\Delta Q(x):=Q(x+1)-Q(x)$. Then $\deg (\Delta Q)=\deg Q-1$, and the iteration $\Delta^n Q(x)=
		\sum_{j=0}^n (-1)^{n-j}\binom{n}{j}Q(x+j)$. If $\deg Q<8$, then $\Delta^8Q(x)=0$ and so 
		\[
		0=\Delta^8Q(0)=\sum_{j=0}^8 (-1)^{8-j}\binom{8}{j}Q(j)=\sum_{j=0}^8 (-1)^{8-j}\binom{8}{j}\cdot \left((-1)^{8-j}/\binom{8}{8-j}\right)=9,
		\]
		which is a contradiction. Hence $\deg P=8$.

		Under the identification \eqref{eq:AWiso}, the element $f^8u_{16}\otimes P(h)+\<u_{16}\otimes e\>\in \left(L(16)\otimes_\C U(\sl_2)\right)/\<u_{16}\otimes e\>$ has a preimage $[w]=[P(h(-1))f^8u_{16}]$ in $A(W)=W/O(W)$. It is easy to see from the structure of affine VOA modules that $w=P(h(-1))f^8u_{16}$ is nonzero in $ W=L_{\hat{\sl}_2}(16,16\frac{\al}{2}).$ Moreover, since $o_I(w)=o_I([w])$, we have 
		\begin{align*}
			o_I(w)f^su_8&=f_I([w]\otimes f^su_8)=f_I([f^8u_{16}]\otimes P(h)f^su_8)\\
			&=(-1)^8\binom{8}{8-s} P(8-2s)\cdot f^su_8=f^su_8,
		\end{align*}
		for $0\leq s\leq 8$. Therefore $\tr|_{M^8(0)}o_I(w)=9$. 
		
		Note that $L(0) (h(-1)^kf^8u_{16})=(k+4)\cdot  h(-1)^kf^8u_{16}$ for all $k\geq 0$, and $\deg P=8$. Hence $w\in W_{\leq 8}\bs W_{\leq 7}$. 
		Then by Lemma~\ref{lm:L[0]actiononW}, $w=w_{[0]}+\ds +w_{[8]}$, with $w_{[i]}\in W_{[i]}$ and $w_{[8]}\neq 0$. It follows that
		\[
		Z_I(w,\tau)=Z_I(w_{[0]},\tau)+\ds +Z_I(w_{[8]},\tau),
		\]
		with $Z_I(w_{[i]},\tau)\in S_{4+i}(\Gamma)=0$ for all $i<8$. Therefore, $Z_I(w,\tau)=Z_I(w_{[8]},\tau)$ is a cusp form of weight $12$, with the leading term $q\cdot \tr|_{M^8(0)}o_I(w)=9q$. 
		Now let $\tilde{v}=\frac{1}{9}w$, then $Z_I(\tilde{v},\tau)= \Delta(\tau)$. 
	\end{proof}
	
	We see from the proof that the key point is to find an element $w\in W$ such that the operator
	$o_I(w):M(0)\to  M(0)$
	has nonzero trace. Using a similar method, one may look for elements of higher $L[0]$-weights in W in order to realize other cusp forms, such as $\Delta E_4, \Delta E_6$, and so on. Notice that the affine VOA $L_{\widehat{\mathfrak{sl}}_2}(16,0)$ is only one example of a strongly rational VOA satisfying the hypotheses of Theorem~\ref{thm:modularform}. It is therefore natural to ask the following question.
	
	\begin{question}
		Are all modular forms realizable as trace functions associated to intertwining operators for strongly rational VOAs? More precisely, given any modular form $f_k(\tau)\in M_k(\Gamma)$, where $\Gamma=\mathrm{SL}_2(\mathbb Z)$, is it always possible to find a strongly rational VOA V, irreducible $V$-modules $W$ and $M$, and a nonzero intertwining operator
		$
		I\in I\fusion[W][M][M],
		$
		such that
		\[
		\bigoplus_{i=0}^N I\fusion[W][M^i][M^i]=\mathbb C I,
		\qquad
		h_M-\frac{c}{24}\in\mathbb N,
		\]
		together with an $L[0]$-homogeneous element $\tilde v\in W$ satisfying
		$L[0]\tilde v=k\tilde v$
		and
		$Z_I(\tilde v,\tau)=f_k(\tau)$?
	\end{question}

	
	The condition
	$h_M-\frac{c}{24}\in \mathbb \N$
	is rather strong for a general strongly rational VOA, especially when combined with the one-dimensional fusion rules condition. There is, however, a natural way to weaken this integrality condition by passing to a congruence subgroup.
	
	Indeed, when $W=V$, Zhu’s original modular invariance theorem is recovered from Corollary~\ref{coro:originalmodularinv}. In this case, although
	$
	\bigoplus_{i=0}^N I\fusion[V][M^i][M^i]
	$
	is not one-dimensional unless $V$ is holomorphic, each summand
	$
	I\fusion[V][M^i][M^i]= \C Y_{M^i}
	$
	and
	$Z_{Y_{M^i}}(v,\tau)=Z_i(v,\tau)$.
	
	It was proved by Dong-Lin-Ng that $Z_i(v,\tau)$ are modular forms of weight $[\wt v]$ on a congruence subgroup of $\mathrm{SL}_2(\mathbb Z)$. More precisely, if $n$ is the smallest positive integer such that
	$n\cdot(h_{M^i}-\frac{c}{24})\in \mathbb Z$
	for all $0\leq i\leq N$, then the resulting representation $\rho: \SL(2,\Z)\to \mathrm{GL}(C_V)$ \eqref{eq:firstrep} has congruence kernel of level dividing $n$, and the trace functions are modular on a corresponding congruence subgroup \cite{NS10,DLN15,CDT25}.
	It is natural to ask the following question.
	
	\begin{question}
		Are the trace functions associated to intertwining operators,
		$	Z_I(v,\tau)$,
		modular forms on some congruence subgroup $ \Gamma(n)\leq \SL(2,\Z)$? In other words, does the representation $\rho:\SL(2,\Z)\to \mathrm{GL}(C_W)$  \eqref{eq:firstrep} have a congruence kernel for all $W\in \mathscr{W}$? 
		
	\end{question}



	%
	



	\newpage

\end{document}